\numberwithin{equation}{section}
\newtheorem{theorem}{Theorem}[section]
\newtheorem{prop}[theorem]{Proposition}
\newtheorem{definition}[theorem]{Definition}
\newtheorem{lem}[theorem]{Lemma}
\newtheorem{cor}[theorem]{Corollary}
\theoremstyle{remark}
\newtheorem{remark}[theorem]{Remark}
\newcommand{\cA}{\mathcal A}
\newcommand{\cH}{\mathcal H}
\newcommand{\cL}{\mathcal L}
\newcommand{\cM}{\mathcal M}
\newcommand{\cN}{\mathcal N}
\newcommand{\cS}{\mathcal S}
\newcommand{\cX}{\mathcal X}
\newcommand{\cY}{\mathcal Y}
\def\M{\mathsf{M}}
\def\p{\mathsf{p}}
\def\R{\mathbb{R}}
\def\N{\mathbb{N}}
\def\cA{\mathcal{A}}
\def\sA{\mathscr{A}}
\def\a{\mathfrak{a}}
\def\div{{\rm div}}
\def\min{{\rm min}}
\def\max{{\rm max}}
\def\dom{{\rm dom}}
\def\Per{{\rm  Per}}
\def\Inp{I_{\rm np}}
\def\Ip{I_{\rm p}}
\def\Enp{E_{\rm np}}
\def\cYk{\cY_k}
\def\Tr{{\rm Tr}}
\def\um{u_{{\rm min}}}
\def\umk{u_{{\rm min},k}}
\def\pmi{\psi_{{\rm min}}}
\def\pmk{\psi_{{\rm min},k}}
\def\ns{\nu_{\Sigma_1}}
\def\Hz{H_{x_0}}
\begin{document}

\title[A new approach to solvation models]{A new approach to constrained total variation solvation models and  the study of solute-solvent interface profiles}


\author{Zhan Chen}
\address{Department of Mathematical Sciences\\
	Georgia Southern University\\
	Statesboro, Georgia\\
        USA}
\email{zchen@georgiasouthern.edu}

\author{Yuanzhen Shao$^{\star}$}
\address{Department of Mathematics\\
The University of Alabama\\ 
	Tuscaloosa, Alabama \\
	USA}
\email{yshao8@ua.edu}

\thanks{$^{\star}$Corresponding author.}


\begin{abstract}
In the past   decade, variational implicit solvation models (VISM) have achieved great success in solvation energy predictions.
However, all existing   VISMs in literature lack the uniqueness of an energy minimizing solute-solvent interface and thus prevent us from studying many important properties of   the  interface profile.
To overcome this difficulty, we introduce a new constrained VISM and conduct a rigorous analysis of the model.
Existence, uniqueness and regularity of the energy minimizing   interface has been studied. A necessary condition for the formation of a sharp solute-solvent interface has been derived.
Moreover, we develop a novel   approach to the variational analysis of the constrained model, which provides a complete answer to a question  in  our previous work~\cite{Shao2022constrained}. 
Model validation and numerical implementation have been demonstrated by using several common biomolecular modeling tasks.
Numerical simulations show that the solvation energies calculated from our new model match the experimental data very well.
\end{abstract}

\subjclass[2020]{Primary: 49Q10; Secondary: 35J20;   92C40} 
\keywords{Biomolecule solvation, Poisson-Boltzmann, Variational implicit solvation model, Solute-solvent interface}

\maketitle


\section{Introduction}\label{Section 1}

The description of the complex  interactions between the solute and solvent plays an important role in essentially all chemical and biological processes.
Solute-solvent interactions are typically described by solvation energies (or closely related quantities):
the free energy of transferring the solute (e.g.  macromolecules including proteins, DNA, RNA) from the vacuum to a solvent environment of interest (e.g.  water at a certain ionic strength). 
There are two major approaches for solvation energy analysis, i.e., explicit solvent models and implicit solvent models \cite{doi:10.1021/ct400065j}.
Explicit models, treating solvent as individual molecules, are too computationally expensive for large  solute-solvent systems, such as the solvation of macromolecules in ionic environments; in contrast, implicit models, by averaging the effect of solvent phase as continuum media \cite{Feig:2004b,Baker:2005,Boschitsch:2004, Baker10037, BOTELLOSMITH2013274, doi:10.1021/jp7101012, https://doi.org/10.1002/jcc.23033}, are much more efficient and thus are able to handle much larger systems \cite{Baker10037, doi:10.1063/1.4745084,Grochowski:2007,Lamm:2003,Fogolari:2002,Tjong:2007b,Mongan:2007,Grant:2007}.

Central in the description of the solvation energy in implicit solvent models is an interface separating the discrete solute and the continuum solvent domains.
All of the physical properties of interest, including electrostatic free energies, biomolecular surface areas, molecular cavitation volumes    and p$K_a$ values are very sensitive to the interface definition \cite{Dong:2006,Swanson:2005a,Wagoner:2006}. 
Variational implicit solvation models (VISM) stand out as a successful approach to compute the disposition of an interface separating the  solute and the solvent \cite{Wei:2005,Bates:2008,doi:10.1063/1.2171192,doi:10.1063/1.2171192,Cheng:2007e,CChen:2009, ZhanChen:2010a, Zhou2020curvature,MR3740372}.
In a VISM, the desired interface profile is obtained by  minimizing a solvation energy functional coupling the discrete description of solute and the continuum description of solvent.

Despite of their initial successes in solvation energy calculations,  sharp solute-solvent interface  models suffer from several drawbacks.
Firstly, from a physical point of view, there should be a smooth transition region, in which atoms of solute and solvent are mixed. 
In principle, an isolated molecule can be analyzed by the first principle --- a quantum mechanical description of the wave function or density distribution of all the electrons and nuclei. However, such a description is computationally intractable for large biomolecules. Under physiological conditions, biomolecules are in a non-isolated environment, and are interacting with solvent molecules and/or other biomolecules. Therefore, their wave functions overlap spatially, so do their electron density distributions.
Secondly, from an analytic point of view, the presence of geometric singularities is inevitable in many conventional VISMs. It makes the underlying model  lack   stability and differentiability, which generates an intrinsic difficulty  in the rigorous analysis of the model. 
Thirdly, from a computational point of view, these surface configurations produce  fundamental difficulty in the simulation of the governing partial differential equations (PDEs), like the Poisson-Boltzmann (PB) equation.
Those considerations motivate  the use of the  diffuse solvent-solute interface definition.

Among all effort  to ameliorate the solvent-solute interface definition, arguably, one of the most extensively used models is the total variation based model (TVBVISM), cf. \cite{ZhanChen:2010a,PhysRevLett.96.087802, Wei2010, Wei_2016_differential,Wang_2015_Parameter,Zhao_2011_Pseudo}. The main idea of TVBVISM is based on a transition parameter $u: \Omega \to [0,1]$ such that $u $ takes value $1$ in the solute and  $0$ in the solvent  region. 
More precisely, the following total solvation free energy was proposed in terms of $u$: 
\begin{align}\label{totalfunctional1}
	\notag I
	=&   \gamma \|Du\|(\Omega) + \int_\Omega P_h u(x)dx+\int_\Omega \rho_s (1-u(x))U^{\mathrm{vdW}}(x)\, dx \\
	\notag  &  +\int_{\Omega}\left\{u(x)\left[\rho_m(x)\psi(x)-\frac{1}{2}\epsilon_m|\nabla
	\psi (x)|^{2}\right]\right.  \\
	&   \left.+(1-u(x))\left[-\frac{1}{2}\epsilon_s|\nabla \psi (x) |^{2}- \beta^{-1} \sum\limits_{j=1}^{N_c} c_j^\infty  (e^{- \beta q_j \psi(x	)    }-1) \right]\right\} dx.
\end{align}
Here the constant $\gamma>0$ is the  surface tension. By the coarea formula for a Lipschitz function  $u: \Omega\to [0,1]$,
$$
\|Du\|(\Omega):=\int_\Omega d|D u|=\int_0^1 \cH^2(\Omega \cap u^{-1}(t))\,dt,
$$
where $\cH^2$ stands for the 2-dimensional Hausdorff measure. 
Hence, the total variation term $\|Du\|(\Omega)$ represents the mean surface area of a family of isosurfaces $\Omega \cap u^{-1}(t)$.
See \cite{Wei2010} for more detail.
According to this geometric interpretation, 
$ 
\gamma \|Du\|(\Omega),
$  
measures the disruption of intermolecular and/or intramolecular bonds during the solvation process.

The constant $P_h$ is the hydrodynamic pressure.
In a previous work \cite{Shao2022constrained}, we proposed a novel physical interpretation of the characteristic function $u$ so that $u(x)$ represents the volume ratio of the solute at   $x\in \Omega$.
Therefore, 
$   \int_{\Omega} P_h  u \, dx  $ is the mechanical work of creating the biomolecular size vacuum
in the solvent.
$\rho_s$ is the constant solvent bulk density,  and $U^{\mathrm{vdW}} (x)$ is the attractive portion of the Van der Waals potential at point $x$.
It represents the attractive dispersion effects near the solute-solvent interface and
has been shown by Wagoner and Baker\cite{Wagoner:2006} to play a crucial role in  accurate nonpolar solvation analysis. 
The first three terms are usually termed the nonpolar portion of the solvation free energy.

The second and third lines of \eqref{totalfunctional1} are usually called the polar portion of the solvation free energy, in which
$\psi$ is the electrostatic potential. $\rho_m$ is an $L^\infty$-approximation of  the density of molecular charges;
$\epsilon_m$ and $\epsilon_s$ are the dielectric constants of the solute molecule and the solvent, respectively, with $0<\epsilon_m\ll \epsilon_s$. 
$q_j$ is the
charge of ion species $j=1,2,\cdots,N_c$; and
$c_j^\infty $ is the bulk concentration of the $j$-th ionic species. 
Finally, $\beta=1/k_B T$,  where $k_B$ is the Boltzmann constant and $T$ is the  absolute  temperature.
For notational brevity, throughout this paper, we put 
\begin{equation}
	\label{Def B}
	B(s)= \beta^{-1} \left[ \sum\limits_{j=1}^{N_c} c_j^\infty  \left( e^{- \beta s q_j } -1 \right) \right].
\end{equation}
Numerical simulations show  that diffuse-interface models can significantly improve the accuracy and efficiency of solvation energy computation \cite{Wei:2005,Bates:2008,doi:10.1063/1.2171192,doi:10.1063/1.2171192,Cheng:2007e,CChen:2009, ZhanChen:2010a, Zhou2020curvature,MR3740372, MR3396402}.
In contrast, on a theoretic level,   there are several open questions concerning  model \eqref{totalfunctional1}.

First, the uniqueness of a minimizer is unknown for \eqref{totalfunctional1}.
Indeed, most of the solvation energy functionals, regardless of sharp or diffuse interfaces, only predict local minimizers, cf. \cite{Wei:2005,Bates:2008,doi:10.1063/1.2171192,Cheng:2007e,CChen:2009, ZhanChen:2010a, Zhou2020curvature, MR3740372, MR3396402}. As a consequence,     solutions of the corresponding Euler-Lagrange equations may not correctly depict   the energy minimizing interface profile. 
In contrast, any minimizer of  \eqref{totalfunctional1} is global.
However, lacking strict convexity, \eqref{totalfunctional1} may admit multiple global  minimizers.
This prevents us from studying many properties of the  interface profile, e.g. the size of the set of  discontinuities.
These observations motivate  us to introduce  strict convexity into model \eqref{totalfunctional1} by including a new parameter $p=\frac{2N}{2N-1}$ with $N\in \N$ so that $u^p(x)$ represents the volume ratio of the solute at $x\in \Omega$. It is important to notice that the  geometric meaning of the term $\|D u\|(\Omega)$ remains the same as in the original model \eqref{totalfunctional1}.
We will establish the existence, uniqueness and regularity of the global minimizer of the modified model,  see \eqref{non penalized total energy functional final form}.

Second, the natural admissible space to minimize \eqref{totalfunctional1} is the space of    $BV-$functions.
Therefore, it is possible that   model~\eqref{totalfunctional1} is minimized by the characteristic function of a set of finite perimete.
This corresponds to a sharp solute-solvent interface, an unrealistic  situation as discussed before.
Nevertheless, it is mathematically impossible to exclude such situations in model~\eqref{totalfunctional1} due to the lack of uniqueness of a minimizer.
Based on the modified model, this work provides  a partial answer to the question why the   solvation free energy is not minimized by a sharp   interface. 
More precisely,   we  show  that a necessary condition for a nonpolar molecule to have a sharp energy-minimizing interface is  that the mean curvature of its  Van Der Waals surface  is everywhere nonpositive, which is unrealistic for almost all real-world biomolecules.
To the best of our knowledge, our work is the first to give a mathematical explanation of such phenomenon.

Third, the physical meaning of the characteristic function $u$ enforces two biological constraints: (1)  $u$ needs to be 1 for the pure solute region and 0 in the pure solvent area, and (2) as a volume ratio function, it must   satisfy that $0\leq u\leq 1. $
This leads to a constrained total variation model~\eqref{non penalized total energy functional final form}, which is a non-differentiable functional  with  a two-sided  obstacle.
It is known that the Euler-Lagrange equations  of similar functionals with simpler structure and without obstacle, e.g. Rudin-Osher-Fatemi models, were formally derived by using the $1-$Laplacian operator \cite{MR3363401}.
With the presence of the obstacle, on a heuristic level with sufficiently smooth minimizer $u$  and energy  functional,
one expects the corresponding first variations  with respect to $u$ to take the form of a variational inequality, or equivalently,
of a $1-$Laplacian type equation involving a measure supported on the coincidence sets $\{u=0\}$ and $\{u=1\}$.
Unfortunately,  both the functional ~\eqref{non penalized total energy functional final form} and the minimizer $u$  lack the  required smoothness. 
This casts a shadow over the study of the first variations of the constrained total variation model, not even formally.
In \cite{Shao2022constrained}, we proposed a novel approach to the variational analysis of such constrained VISM via approximation by a sequence of $q$-energy type functionals. This approach was applied to the numerical study of the nonpolar energy in our previous work \cite{Shao2022constrained}. 
Using a similar idea and   the new volume ratio function $u^p$, we will   rigorously derive the variational formulas of the new total energy functional.

The rest of the paper is organized as follows.  A list of the main theorems  is stated at the end of the introduction.
In Section~\ref{Section 2}, we state   the precise definition of our new model.
In Section~\ref{Section 3}, we study a family of perturbed Poisson-Boltzmann equations. These equations will be used in Sections~\ref{Section 4} and \ref{Section 6}.
Section~\ref{Section 4} is devoted to the validation of the model, in which we prove the existence and uniqueness of a minimizer and the continuous dependence of the   solvation energy on the biological constraints.
In Section~\ref{Section 5},  a necessary condition for the formation of a sharp solute-solvent interface is derived.
The argument heavily relies on the tools from nonsmooth convex analysis.
In Section~\ref{Section 6}, we conduct a variational analysis of our new   model by means of an approximation argument.
Base on this  analysis, our model, including its solvation energy and solute-solvent interface predictions, is studied through numerical simulations. 
For the readers' convenience, we include two appendices at the end of this article, one on $BV-$functions and the other on nonsmooth convex analysis.

For the reader's convenience, we will give a list of the main theoretic results here:
\begin{itemize}
\item Theorem~\ref{Thm: existence of global minimizer}: the existence and uniqueness of global minimizers of the total solvation energy;
\item Theorem~\ref{Thm: free energy converg}: the continuous dependence of the solvation energy  on the biological constraints;
\item Theorem~\ref{Thm: sharp interface}: a  necessary condition for the formation of a sharp solute-solvent interface;
\item Theorem~\ref{Thm: existence of minimizer Ek}: the theoretic basis of the numerical simulations.
\end{itemize}

\section{Solvation Free Energy Functional}\label{Section 2}

\subsection{Notations}\label{Section 2.0}
In this article,  we use $x=(x_1, x_2,\cdots, x_N)$ to denote the coordinates in $\R^N$. 
$\mathbb{S}^{N-1}$ denotes the $(N-1)-$sphere in $\R^N$.
Given two vectors $u,v\in \R^N$, $u\cdot v$ is their inner products.
 
Given  $U \subseteq \R^N$, 
$\overline{U}$ stands for the closure of $U$. The topological boundary of $U$ is denoted by $\partial U$.
Given two domains $U$ and $\Omega $ in $\R^N$, $U\subset\subset \Omega$ means that  $\overline{U}\subset\Omega$.

For any two Banach spaces $X,Y$, the notation 
$$
X\hookrightarrow Y
$$
means that $X$ is continuously embedded  in $Y$.
Given a sequence $\{u_k\}_{k=1}^\infty=(u_1,u_2,\cdots)$ in $X$, $u_k \rightharpoonup u$ in $X$ means that $u_k$ converge weakly to some $u\in X$.


Given $1\leq p \leq \infty$, let $p' $ be its H\"older conjugate.
$L^p(U,X)$ is the set of all $X$-valued $p-$integrable (Lebesgue) measurable functions defined on $U$, whose norm is denoted by $\|\cdot\|_p$.
The notation $X$ is sometimes omitted when its choice is clear from the context.
$W^{k,p}(U)$ stands for  the Sobolev space consisting of functions whose weak derivatives up to $k-$th power belong to $L^p(U)$.
Additionally, $H^1(U)=W^{1, 2}(U)$.


Given two sets $A$ and $B$, $A\subseteq B$ and $A\subset B$ mean  that $A$ is a subset   and a proper subset of $B$, respectively.
 
Finally, we denote by $\cL^N$ and $\cH^{N-1}$ the $N-$dimensional Lebesgue measure and the $(N-1)-$dimensional Hausdorff measure, respectively.

\subsection{An Experimental Based Domain Decomposition}\label{Section 2.1}

Let $\Omega \subseteq \R^3$ be a bounded and connected Lipschitz domain  composed of three disjoint subdomains:
\begin{itemize}
\item $\Omega_m$:  solute (molecular) region;
\item $\Omega_s$: solvent  region;
\item $\Omega_t$: solute-solvent mixing  region.
\end{itemize}
We further assume that $\partial\Omega \subset \partial\Omega_s$ and $\partial\Omega_m \subset \partial\Omega_t$.
Let 
$$
\Sigma_1=\partial\Omega_m  
$$ 
be  a smoothed Van Der Waals surface enclosing the pure solute region and 
$$
\Sigma_0=\partial\Omega_s \setminus \partial\Omega=\partial\Omega_t\setminus \Sigma_1
$$ 
be the smoothed solvent accessible surface outside which is the pure solvent domain. 
Suppose that $\Sigma_1 \cap \Sigma_0 =\emptyset$  and $\Omega_m, \Omega_s$ are non-empty. 
In addition, we assume that $\Sigma_i$, $i=0,1$, are embedded closed Lipschitz surfaces.
In this article, a closed surface always means one that is  compact, without boundary and embedded in $\R^3$.
We further assume that the solute region $\Omega_m$ contains $N_a$ solute atoms located at $x_1, \cdots ,  x_{N_a}$; and there are $N_c$   ion species outside $\Omega_m$. 
Finally, for notational brevity, we put $\Omega_w=\Omega\setminus \overline{\Omega}_s$. 
A picture illustration of the domain definition and decomposition can be found in Figure~\ref{domain}(A).


\begin{figure}[hbt!]
\begin{subfigure}{.5\textwidth}
  \centering
  \includegraphics[width=.7\columnwidth]{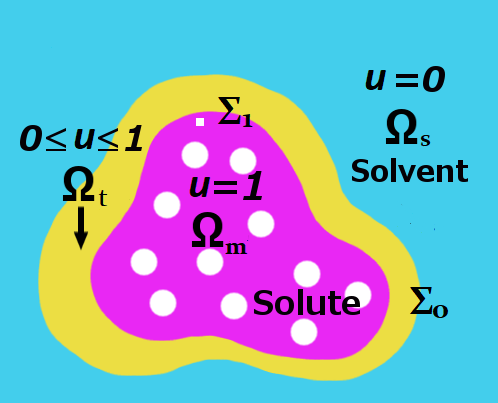}
   \caption{}
\end{subfigure}%
\begin{subfigure}{.4\textwidth}
  \centering
  \includegraphics[width=1\columnwidth]{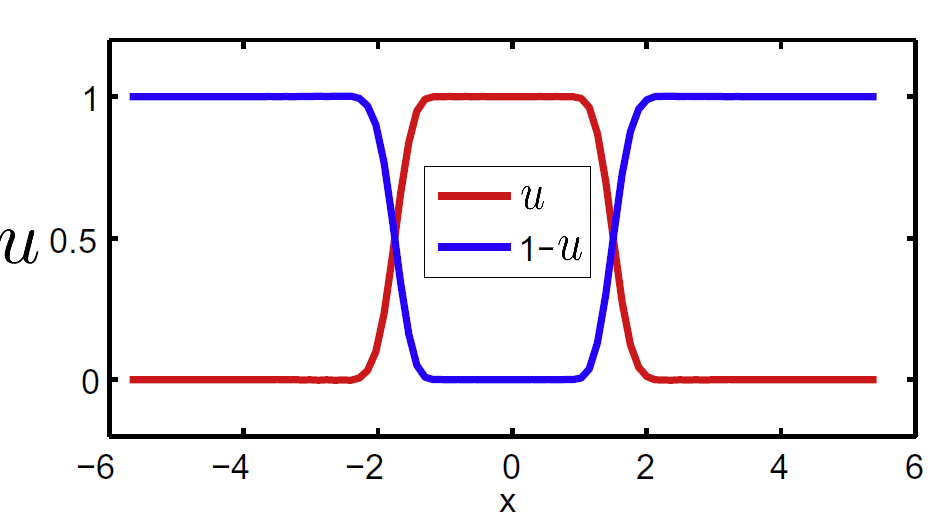}
  \caption{}
\end{subfigure}
\caption{(A) Illustration of the model domain definition and decomposition: $\Omega_m$: solute (molecular) region; $\Omega_s$: solvent  region; $\Omega_t$: solute-solvent mixing region; (B) The cross line of $u$ and $(1 -u)$ of a diatomic system.}
\label{domain}
\end{figure}



\subsection{A Novel Solvation Energy Functional}\label{Section 2.2}


As an improvement of the previous differential geometric based solvation model \cite{ZhanChen:2010a, Shao2022constrained}, we study a novel 
solvation free energy, whose nonpolar portion  is  defined as
\begin{equation}
\label{nonpolar energy}
\notag \Inp(u)=  \gamma \int_\Omega d|Du| + \int_\Omega \left[ P_h u^p  + \rho_s (1-u^p) U^{\mathrm{vdW}}   \right] \, dx  
\end{equation}
with $p=\frac{2N}{2N-1}$ for some integer $N>1$ and $\lambda, P_h>0$.  Note that $1<p<\frac{3}{2} $. Since $\frac{3}{2}=1^*$ is the Sobolev dual of $1$,  we have
$$
BV(\Omega)\hookrightarrow L^p(\Omega).
$$
Here $u:\Omega\to \R$ represents a characterizing function of the solute such that $u^p(x)$ is the volume ratio at position $x\in \Omega$ (as shown in Figure~\ref{domain}). As such, the physical constraints
\begin{equation}
\label{constrain 1}
u(x)\in [0,1] \quad \text{for a.a. } x\in \Omega
\end{equation}
and
\begin{equation}
\label{constrain 2}
u=1 \quad \text{a.e. in }\Omega_m  \quad \text{and} \quad  u=0 \quad \text{a.e. in }\Omega_s 
\end{equation}
need to be imposed.
Note   that $U^{\mathrm{vdW}}(x)$ can be formulated by $\sum_i U_i^{\rm att} (x)$ in which $U_i^{\rm att} (x)$ represents the attractive part of Lennard-Jones potential \cite{Wagoner:2006,ZhanChen:2010a}.  To this end, the L-J potential can be divided into attractive $U_i^{\rm att}$ and repulsive
$U_i^{\rm rep}$ in different ways. Here we take 
a Weeks-Chandler-Andersen (WCA) decomposition based on the original WCA theory \cite{Levy:2003}:
\begin{eqnarray}\label{potential3}
U_i^{\rm att,WCA}({\vec{r}})&=& \left\{
\begin{array}{ l l }\nonumber 
-\epsilon_{is}(x)  & 0 < \|x - x_i\|< 2^{1/6} \sigma_{is}
 \\
U_i^{\rm LJ}(x)~~~ & \|x - x_i\|\geq 2^{1/6}\sigma_{is},
\end{array} \right.\\
U_i^{\rm rep,WCA}(x)&=& \left\{
\begin{array}{ l l }\nonumber 
 U_i^{\rm LJ}(x) + \epsilon_{is}(x) ~~& 0< \|x - x_i\|< 2^{1/6}\sigma_{is}
 \\
0     & \|x - x_i\|\geq 2^{1/6} \sigma_{is}.
\\
\end{array} \right.
\end{eqnarray} 
where
$$
U_{i}^{\rm LJ}(r)=4\epsilon_{is} \left[ \left(\frac{\sigma_{is}}{r} \right)^{12} -  \left(\frac{\sigma_{is}}{r} \right)^{6} \right] 
$$	
with parameters $\epsilon_{is}$ of energy and $\sigma_{is}$ of length.

We choose $\Omega_m$ in such a way that there exist  balls $B(x_i,\tau)$ with $i=1,\cdots,N_a$ and $\tau>0$ such that 
\begin{equation}\label{ASP: atom supp}
\bigcup\limits_{i=1}^{N_a} \overline{B}(x_i,\tau) \subset \Omega_m  
\end{equation}

The polar portion of the solvation free energy is defined as
\begin{align*}
\notag
\Ip (u,\psi) =   \int_\Omega &  \left[      \rho_m \psi    - \frac{1}{2}\epsilon(u) |\nabla \psi|^2 
-  (1- u^p )  B(\psi)     \right]\, dx.
\end{align*}
Here $\epsilon(u) = u^p   \epsilon_m + (1- u^p )  \epsilon_s$ is the dielectric constant of the solvent/solute mixture.
$\rho_m$ is supported in $\Omega_m$.
In addition, the neutral condition holds
\begin{equation}\label{neutral}
\sum\limits_{j=1}^{N_c} c_j^\infty q_j=0.
\end{equation}
Recall the definition of $B(\cdot)$ from \eqref{Def B}.
It is important to observe that $B(0)=0$  and, by \eqref{neutral}, $B^\prime(0)=0$ and $B^\prime(\pm \infty)=\pm \infty$. Further,  $B^{\prime\prime}(s)>0$. We thus conclude that $B(0)=\underset{{s\in \R}}{\min} B(s)$ and $B$ is strictly convex.

The problem of interest to us is to minimize the 
the total  energy functional
\begin{align}\label{non penalized total energy functional}
L(u,\psi)=   \Inp(u) + \Ip (u,\psi) ,
\end{align}
where $\psi$ satisfies the Dirichlet problem of a generalized Poisson-Boltzmann equation  
\begin{equation}
\label{GPBE}
\left\{\begin{aligned}
\div (\epsilon(u) \nabla \psi)  - (1-u^p) B'(\psi)&=  - \rho_m  &&\text{in}&&\Omega;\\
\psi&=\psi_\infty   &&\text{on}&&\partial\Omega
\end{aligned}\right.
\end{equation} 
for some
\begin{equation*}
\psi_\infty \in W^{1,\infty}(\Omega).
\end{equation*}
Therefore given   $u\in BV(\Omega)  $ satisfying \eqref{constrain 1}, $\psi=\psi(u)$ is determined via the elliptic boundary value problem~\eqref{GPBE}.

With the above observations, the minimization problem can be restated as to minimize
\begin{align}\label{non penalized total energy functional final form}
\hspace*{-2em}
I(u )  
= \gamma \int_\Omega d|Du| + \int_\Omega \left[ P_h u^p  + \rho_s (1-u^p) U^{\mathrm{vdW}} \right] \, dx  
+ \int_\Omega   \left[    \rho_m \psi    - \frac{1}{2}\epsilon(u) |\nabla \psi|^2 - (1-u^p)  B(\psi)         \right]\, dx
\end{align}
in the admissible space  
\begin{align*}
\cY=\{ & u\in BV(\Omega): \, u \, \text{ satisfies Constraints } \eqref{constrain 1} \text{ and }\eqref{constrain 2}\}  
\end{align*}
and $\psi=\psi(u)$ is determined via  \eqref{GPBE} in the space
\begin{equation*}
\cA=\{v\in H^1(\Omega): \, v|_{\partial \Omega}=\psi_\infty\}.
\end{equation*}



\section{A Family of Perturbed Poisson-Boltzmann Equation}\label{Section 3}



In this section, we study a sequence of functionals associated with the polar free energy, which will be used in the numerical simulations in Section~\ref{Section 6}.

Let $\{q_k\}_{k=1}^\infty$ be a sequence of decreasing real numbers with $\lim\limits_{k\to \infty} q_k= 1$  and taking values in  $\displaystyle \left(1,  \frac{\epsilon_s}{\epsilon_s-\epsilon_m}  \right)$. In addition, set $q_0=1$.
For any $u\in BV(\Omega)$ and $k=0,1,\cdots$, we put
\begin{align*}
G^k_u (\psi):= \int_\Omega  \left[     \frac{1}{2} \epsilon (u) |\nabla \psi|^2 -     \rho_m \psi +  (q_k-u^p )  B(\psi) \right]\, dx.
\end{align*}
Particularly, $G^0_u (\psi):= -\Ip(u, \psi)$.
Further, let $\cY_0=\cY$ and for $k=1,2,\cdots$ define
\begin{align}\label{Def: cYk}
  \cYk=\{   u\in W^{1,q_k}(\Omega): \,      | u |\leq \sqrt[p]{q_k}  \text{ a.e. in }\Omega \quad \text{and $u$ satisfies Constraint~\eqref{constrain 2}}\} .
\end{align}
Correspondingly, we introduce a sequence of perturbed Poisson-Boltzmann equations for $k=0,1,\cdots$
\begin{equation}
\label{GPBEk}
\left\{\begin{aligned}
\div (\epsilon(u) \nabla \psi)  - (q_k- u^p) B'(\psi)&=  -  \rho_m  &&\text{in}&&\Omega;\\
\psi&=\psi_\infty   &&\text{on}&&\partial\Omega.
\end{aligned}\right.
\end{equation}
In particular, when $k=0$, \eqref{GPBEk} coincides with \eqref{GPBE}.
Similar problems have been studied in \cite{MR3740372, MR2873259, MR3396402, Shao2022constrained}.

\begin{prop}\label{Prop: GPBE}
Given any $u\in \cYk$, $k=0,1,\cdots$, there exists a unique $\psi_u \in \cA$ such that
$$
G_u^k (\psi_u)=\underset{\psi\in \cA}{\min} G_u^k (\psi)<\infty.
$$
Moreover, $\psi_u$ is the unique weak solution to   \eqref{GPBEk}. Further, $\psi_u$ satisfies
\begin{align}
\label{est for psi 1}
\|\psi_u\|_{H^1} + \|\psi_u\|_\infty \leq \widetilde{C}_0 .
\end{align}
In particular, the constant $\widetilde{C}_0$   is independent of $\Omega_m$, $\Omega_s$, $u$ and $k$.
\end{prop}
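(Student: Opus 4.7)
The plan is to derive existence, uniqueness, the PDE characterization and the $H^1\cap L^\infty$ estimate simultaneously by exploiting the convex variational structure of $G_u^k$ on the affine space $\cA$. The starting point is two \emph{uniform} observations extracted from the definition of $\cYk$: since $u^p\le q_k\le q_1<\epsilon_s/(\epsilon_s-\epsilon_m)$ a.e., one has the ellipticity bounds
\begin{equation*}
\epsilon_0:=\epsilon_s-q_1(\epsilon_s-\epsilon_m)\le \epsilon(u)\le \epsilon_s
\end{equation*}
and the sign condition $(q_k-u^p)\ge 0$ a.e.\ in $\Omega$, both holding independently of $u$ and $k$. Combined with $B\ge 0$ (since $B(0)=\min B=0$) and the strict convexity of $B$, the functional $G_u^k$ is the sum of a strictly convex quadratic form in $\nabla\psi$, a linear term in $\psi$, and a nonnegative convex integrand in $\psi$.

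Existence and uniqueness of a minimizer $\psi_u\in\cA$ will then follow from the direct method. Coercivity is shown by decomposing $\psi=\psi_\infty+\phi$ with $\phi\in H^1_0(\Omega)$ and combining the uniform ellipticity with Poincar\'e's inequality to absorb $\int_\Omega\rho_m\psi$ into the quadratic term; weak lower semicontinuity follows from convexity plus strong continuity of each summand on $H^1(\Omega)$; strict convexity (in $\nabla\psi$, made rigorous on $\cA$ via the boundary condition) gives uniqueness. Computing the G\^ateaux derivative in directions $v\in H^1_0(\Omega)$ and setting it to zero produces exactly the weak formulation of \eqref{GPBEk}, and convexity ensures that this weak solution is unique.

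The $H^1$ bound follows by inserting $\psi_\infty$ itself as a competitor: $G_u^k(\psi_u)\le G_u^k(\psi_\infty)\le C_1$, where $C_1$ depends only on $\|\psi_\infty\|_{W^{1,\infty}}$, $\|\rho_m\|_\infty$, $\epsilon_s$ and $B(\psi_\infty)$; dropping the nonnegative term $\int(q_k-u^p)B(\psi_u)\,dx$ and using Cauchy--Schwarz together with Poincar\'e on $\psi_u-\psi_\infty$ yields the desired bound on $\|\psi_u\|_{H^1}$. For the $L^\infty$ bound I would apply a Stampacchia-type truncation. For $M\ge\|\psi_\infty\|_\infty$ the function $v=(\psi_u-M)^+$ lies in $H^1_0(\Omega)$ and can be tested against the weak form; on $\{\psi_u>M\}$ both $(q_k-u^p)$ and $B'(\psi_u)$ are nonnegative (as $B'$ is increasing with $B'(0)=0$), so this term drops to yield
\begin{equation*}
\epsilon_0\,\|\nabla(\psi_u-M)^+\|_2^2\;\le\;\|\rho_m\|_\infty\int_{\{\psi_u>M\}}(\psi_u-M)\,dx.
\end{equation*}
Combining with the Sobolev embedding $H^1_0(\Omega)\hookrightarrow L^6(\Omega)$ gives a superlinear decay $|A(h)|\le C(h-M)^{-1}|A(M)|^{5/3}$ for the superlevel sets $A(h)=\{\psi_u>h\}$, and Stampacchia's lemma furnishes an upper bound $\psi_u\le M_\star$ a.e.\ with $M_\star$ depending only on $\epsilon_0$, $\|\rho_m\|_\infty$, $\|\psi_\infty\|_\infty$ and $|\Omega|$. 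The symmetric lower bound is obtained by testing with $-(\psi_u+M)^-$.

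The main obstacle is the claimed \emph{uniformity} of $\widetilde{C}_0$ in $\Omega_m$, $\Omega_s$, $u$ and $k$. Independence in $k$ is guaranteed by replacing $q_k$ by its uniform upper bound $q_1$ in every estimate and using the resulting $\epsilon_0$ throughout; independence in $u$ is built into the ellipticity constant $\epsilon_0$ and the sign $(q_k-u^p)\ge 0$; and independence in the subdomains follows because $\rho_m$ and $\psi_\infty$ are prescribed globally, with none of the constants above depending on the decomposition $\Omega=\Omega_m\cup\Omega_t\cup\Omega_s$. The most delicate point to track is that the Sobolev/Poincar\'e constants entering the Stampacchia iteration depend only on the fixed outer domain $\Omega$, which is the case since the test functions lie in $H^1_0(\Omega)$.
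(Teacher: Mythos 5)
Your proposal is correct in substance and reaches the same conclusions, but it is organized differently from the paper's argument. The paper first solves the purely linear auxiliary problem \eqref{BVP} to obtain $\hat{\psi}_u$ with a uniform bound $M_0$, then shifts the functional to $\tilde{G}_u^k$ on $H^1_0(\Omega)$ (which absorbs the linear term $-\int\rho_m\psi$ into the quadratic part via \eqref{weak sol 1}) and minimizes there, finally citing Steps (iii) and (iv) of the earlier work for the $L^\infty$ and $H^1$ bounds on the correction $\bar{\psi}_u$. You instead minimize $G_u^k$ directly on the affine space $\cA$ and supply an explicit Stampacchia truncation for the $L^\infty$ bound. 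Your route is more self-contained; the paper's decomposition buys a cleaner bookkeeping of which constants depend on what (all the $u$-, $k$-, and subdomain-independence is concentrated in $M_0$ and then inherited by $M_1$). Both correctly identify the two uniform facts that drive everything: $\epsilon_s-q_1(\epsilon_s-\epsilon_m)\le\epsilon(u)\le\epsilon_s$ and $q_k-u^p\ge 0$.

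One point deserves care in your ordering. You derive the weak formulation of \eqref{GPBEk} by computing the G\^ateaux derivative of $G_u^k$ and then use that weak formulation, tested with $(\psi_u-M)^+$, to prove the $L^\infty$ bound. Because $B$ grows exponentially, differentiating the term $\int(q_k-u^p)B(\psi)\,dx$ at a minimizer that is only known to lie in $H^1(\Omega)$ requires an integrability justification (the finiteness of $\int(q_k-u^p)B(\psi_u)\,dx$ controls $e^{-\beta q_j\psi_u}$ for each $j$ but not automatically the dominating function for the difference quotients), and the test function $(\psi_u-M)^+$ is not yet known to be bounded. The standard and safer order --- the one implicit in the paper's citation of its predecessor --- is to first establish $\|\psi_u\|_\infty\le M_\star$ by a truncation comparison at the level of the functional (replace $\psi_u$ by $\max(\min(\psi_u,M),-M)$ for $M\ge\|\psi_\infty\|_\infty$ and check that this does not increase $G_u^k$, strictly decreasing it unless the truncation is trivial), and only then derive the Euler--Lagrange equation, which is now unproblematic since $B'(\psi_u)\in L^\infty(\Omega)$. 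This is a repair of the order of steps rather than of the ideas, but as written your argument has a small circularity at that point.
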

\begin{proof}
Analogous problems have been studied in the literature on various Poisson-Boltzmann type equations, cf. \cite{MR3740372, MR2873259, MR3396402,  Shao2022constrained}.
In order to show the determining factors of the constant $\widetilde{C}_0$ in \eqref{est for psi 1},
we will, nevertheless, state a brief proof. 

For every $k$, $\epsilon(u)\in   L^\infty(\Omega)$ with 
$0<\epsilon_s- q_1 (\epsilon_s-\epsilon_m)\leq \epsilon(u)\leq \epsilon_s $. 
Standard elliptic theory, see\cite[Theorems 8.3 and 8.16]{MR737190}, implies that
\begin{equation}
\label{BVP}
\left\{\begin{aligned}
{\div} (\epsilon(u) \nabla \psi) +  \rho_m &=0  &&\text{in}&&\Omega;\\
\psi&=\psi_\infty   &&\text{on}&&\partial\Omega
\end{aligned}\right.
\end{equation}
has a unique weak solution $\hat{\psi}_u $, i.e. 
\begin{equation}
\label{weak sol 1}
\int_\Omega \epsilon(u)\nabla \hat{\psi}_u \cdot \nabla \phi\, dx =\int_\Omega  \rho_m \phi\, dx,\quad \forall  \phi \in H^1_0(\Omega),
\end{equation}
satisfying
\begin{align}\label{bound 1}
\|\hat{\psi}_u\|_{H^1}+\|\hat{\psi}_u\|_\infty   \leq M_0  .  
\end{align}
The constant $M_0$ depends only on $\Omega$, $\epsilon_s$, $\epsilon_m$, $q_1$ and $\psi_\infty$.
Define $\tilde{G}^k_u: H^1_0(\Omega) \to \R \cup \{+\infty\}$ by
$$
\tilde{G}_u^k(\psi)= \int_\Omega  \left[     \frac{1}{2}\epsilon(u) |\nabla \psi|^2  +  (q_k-u^p)  B(\psi+\hat{\psi}_u)       \right] \, d x.
$$
By the direct method of calculus of variation and the strict convexity of  $\tilde{G}_u^k(\cdot)$, there exists a global minimizer $\bar{\psi}_u \in H^1_0(\Omega)$ of $\tilde{G}^k_u(\cdot)$.
\eqref{weak sol 1}  implies  
$$
G_u^k(\psi)=\tilde{G}_u^k(\psi-\hat{\psi}_u) + \int_\Omega \left[\frac{1}{2}\epsilon(u) |\nabla \hat{\psi}_u|^2  -  \rho_m \hat{\psi}_u \right]\, d x.
$$
Let $\psi_u=\hat{\psi}_u + \bar{\psi}_u$.
From the above equality, we learn that  $\psi_u$ minimizes $G_u^k(\cdot)$ in $\cY_k$.
Then following Steps (iii) and (iv) in the proof of  \cite[Proposition~2.2]{Shao2022constrained}, we can show that 
$$
\|\bar{\psi}_u\|_{\infty} +\|\bar{\psi}_u\|_{H^1} \leq M_1 
$$
for some constant $M_1$ depending only on $M_0$.
We can take $\widetilde{C}_0=M_0+M_1$.
\end{proof}
The above proposition immediately gives the following crucial estimates.
For every $k$ and $u\in \cYk$,
\begin{align}
\notag
  G_u^k (\psi_u) < & G_u^k (\psi_\infty) 
=  \int_\Omega  \left[     \frac{1}{2}\epsilon(u) |\nabla \psi_\infty|^2 - \rho_m \psi_\infty + (q_k-u^p)   B(\psi_\infty)    \right]\, dx\\
\label{bound Eu}
\leq &   C \left[ \|\psi_\infty\|_{H^1}^2 + \|\psi_\infty\|_\infty +  B(\|\psi_\infty\|_\infty)   \right] 
\leq   \widetilde{C}_1 ,
\end{align}
where $\psi_u$ is the solution to \eqref{GPBEk}.
The constant $\widetilde{C}_1$ is independent of $\Omega_m$, $\Omega_s$, $k$ and the choice of $u$.


\begin{prop}\label{Prop: GPBE converg}
Let $u_k\in \cYk$, $k=0,1,\cdots$, be such that
$$
  u_k \to u_0 \quad \text{in } L^1(\Omega) \quad \text{as } k\to \infty.
$$
Let $\psi_k  \in  \cA$ satisfy 
$ 
G_{u_k}^k (\psi_k)=\underset{w\in \cA}{\min} G_{u_k}^k (w)  .
$ 
Then 
\begin{equation}
\label{eq GPBEk converg 1}
\psi_k\to \psi_0 \quad \text{in }H^1(\Omega) \quad \text{and} \quad G_{u_k}^k (\psi_k) \to G_{u_0}^0(\psi_0)  \quad \text{as } k\to \infty.
\end{equation}
If, in addition,  $  u_k\in \cY$ and $\widetilde{\psi}_k  \in  \cA$ satisfies
$ 
G_{u_k}^0 (\widetilde{\psi}_k)=\underset{w\in \cA}{\min} G_{u_k}^0 (w)  .
$ 
Then 
\begin{equation}
\label{eq GPBEk converg 2}
\widetilde{\psi}_k\to \psi_0 \quad \text{in }H^1(\Omega) \quad \text{and} \quad G_{u_k}^0 (\widetilde{\psi}_k) \to G_{u_0}^0(\psi_0)  \quad \text{as } k\to \infty.
\end{equation}

\end{prop}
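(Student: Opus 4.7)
My plan is to use the uniform bounds from Proposition~\ref{Prop: GPBE} to extract weakly convergent subsequences and then identify their limits via the weak form of \eqref{GPBEk}. Since the uniqueness statement for the Poisson-Boltzmann equation forces the limit to be $\psi_0$, a standard subsequence-of-subsequence argument will then upgrade the convergence to the full sequence. I will focus on the first claim \eqref{eq GPBEk converg 1}; the second claim \eqref{eq GPBEk converg 2} follows from the same reasoning with $q_k\equiv 1$.

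First, by Proposition~\ref{Prop: GPBE}, $\|\psi_k\|_{H^1}+\|\psi_k\|_\infty \leq \widetilde{C}_0$ uniformly in $k$, so after passing to a subsequence (not relabeled) there exists $\psi^*\in \cA$ with $\psi_k \rightharpoonup \psi^*$ in $H^1(\Omega)$, $\psi_k \to \psi^*$ in $L^2(\Omega)$ and a.e., and $\|\psi^*\|_\infty \leq \widetilde{C}_0$. Simultaneously, $u_k\to u_0$ in $L^1(\Omega)$ together with the uniform bound $|u_k|\leq q_k^{1/p}$ gives, via dominated convergence along a further subsequence, $u_k\to u_0$ in $L^r(\Omega)$ for every finite $r$, and $\epsilon(u_k)\to \epsilon(u_0)$ both a.e.\ and in each $L^r$, while remaining bounded in $L^\infty$.

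Next, I will pass to the limit in the Euler--Lagrange equation
\[
\int_\Omega \epsilon(u_k)\nabla \psi_k \cdot \nabla \phi\,dx + \int_\Omega (q_k-u_k^p) B'(\psi_k)\phi\,dx = \int_\Omega \rho_m \phi\,dx,\qquad \phi\in H^1_0(\Omega).
\]
For fixed $\phi$, the product $\epsilon(u_k)\nabla\phi$ converges strongly in $L^2$ to $\epsilon(u_0)\nabla\phi$ (dominated convergence), so the weak convergence of $\nabla\psi_k$ handles the first term. For the nonlinear term, the uniform $L^\infty$ bound on $\psi_k$ places $B'(\psi_k)$ inside the bounded continuous range of $B'$, and since $\psi_k\to\psi^*$ a.e., $B'(\psi_k)\to B'(\psi^*)$ a.e.\ and boundedly; combined with $(q_k-u_k^p)\to (1-u_0^p)$ in $L^r$, dominated convergence delivers the limit. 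Hence $\psi^*$ is the unique weak solution of \eqref{GPBE} associated with $u_0$, i.e.\ $\psi^*=\psi_0$, and the limit is independent of the subsequence, so the whole sequence satisfies $\psi_k\rightharpoonup \psi_0$ in $H^1$ and a.e.

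The main technical step is upgrading to strong $H^1$ convergence. I will test the weak form with the admissible test function $\phi=\psi_k-\psi_0\in H^1_0(\Omega)$, which yields
\[
\int_\Omega \epsilon(u_k)\nabla \psi_k\cdot \nabla(\psi_k-\psi_0)\,dx = \int_\Omega \rho_m (\psi_k-\psi_0)\,dx - \int_\Omega (q_k-u_k^p) B'(\psi_k)(\psi_k-\psi_0)\,dx.
\]
The right-hand side vanishes in the limit because $\psi_k-\psi_0\to 0$ in $L^2$ while the remaining factors are uniformly bounded. On the other hand, $\epsilon(u_k)\nabla \psi_0 \to \epsilon(u_0)\nabla \psi_0$ strongly in $L^2$ by dominated convergence, and $\nabla(\psi_k-\psi_0)\rightharpoonup 0$ weakly in $L^2$, so $\int_\Omega \epsilon(u_k)\nabla\psi_0\cdot \nabla(\psi_k-\psi_0)\,dx\to 0$. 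Subtracting gives $\int_\Omega \epsilon(u_k)|\nabla(\psi_k-\psi_0)|^2\,dx \to 0$, and the uniform ellipticity $\epsilon(u_k)\geq \epsilon_s-q_1(\epsilon_s-\epsilon_m)>0$ yields $\nabla \psi_k\to \nabla \psi_0$ in $L^2$, hence $\psi_k\to\psi_0$ in $H^1(\Omega)$.

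Finally, for convergence of the energies, each integrand in $G_{u_k}^k(\psi_k)$ is controlled: writing $\epsilon(u_k)|\nabla \psi_k|^2=\epsilon(u_k)\nabla\psi_k\cdot \nabla \psi_k$, the product $\epsilon(u_k)\nabla\psi_k\to \epsilon(u_0)\nabla\psi_0$ strongly in $L^2$ (by Vitali, using the equi-integrability from strong $L^2$ convergence of $\nabla\psi_k$ and boundedness and a.e.\ convergence of $\epsilon(u_k)$), so its pairing with $\nabla\psi_k$ converges to the analogous pairing at the limit; the linear term $\int \rho_m \psi_k$ trivially converges; and $(q_k-u_k^p)B(\psi_k)$ is uniformly bounded in $L^\infty$ with a.e.\ limit $(1-u_0^p)B(\psi_0)$, so dominated convergence applies. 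The main obstacle is the strong $H^1$ step above, which relies crucially on the uniform $L^\infty$ bound for $\psi_k$ (to control $B'(\psi_k)$ without extra regularity) provided by Proposition~\ref{Prop: GPBE}.
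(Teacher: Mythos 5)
Your proof is correct, and its skeleton matches the paper's: uniform bounds from Proposition~\ref{Prop: GPBE} give a weakly convergent subsequence, the limit is identified as $\psi_0$ by passing to the limit in the weak formulation, strong $H^1$ convergence comes from testing with $\psi_k-\psi_0$, and the energies converge by dominated convergence. The one genuine divergence is in the cross term $\int_\Omega \epsilon(u_k)\nabla\psi_k\cdot\nabla\psi_0\,dx$: the paper handles it by H\"older's inequality after establishing, via a Calderon--Zygmund/Meyers-type estimate on Lipschitz domains (together with a nontrivial verification that such domains are of class $\mathscr{D}^r$), that $\psi_0\in W^{1,p_0}(\Omega)$ for some $p_0>2$; you instead observe that $\epsilon(u_k)\nabla\psi_0\to\epsilon(u_0)\nabla\psi_0$ strongly in $L^2$ by dominated convergence (using a.e.\ convergence of $u_k$ along a subsequence and the dominating function $\epsilon_s|\nabla\psi_0|$), and pair this with the weak convergence $\nabla(\psi_k-\psi_0)\rightharpoonup 0$. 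Your route is more elementary and avoids the higher-integrability machinery entirely, at the mild cost of working along an a.e.-convergent subsequence and invoking the standard subsequence-of-subsequences upgrade at the end (which you do state, though you should make explicit that it also covers the strong $H^1$ and energy convergences, not just the identification of the weak limit). The paper's route buys an unconditional estimate on the full sequence without the a.e.\ extraction, but at the price of importing a substantially heavier regularity result.
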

\begin{proof}
We will only prove \eqref{eq GPBEk converg 1}. 
The proof for \eqref{eq GPBEk converg 2} is similar.

Observe that since $u_k\to u_0 $ in $L^1(\Omega)$ and $\{ u_k \}_{k=0}^\infty$ are uniformly bounded in $L^\infty(\Omega)$.  
From the Riesz-Thorin interpolation theorem, we infer that $u_k\to u_0 $ in $L^r(\Omega)$ for all $r\in [1,\infty)$.
Further, by the mean value theorem
\begin{equation}
\label{u converg}
\lim\limits_{k\to \infty} \int |u_k^p-u_0^p|^r \, dx   \leq M \lim\limits_{k\to \infty}\| u_k - u_0\|_r^r =0 ,\quad r\in [1,\infty),
\end{equation}
for some constant $M>0$.

Due to \eqref{est for psi 1}, there exists a subsequence of $\{\psi_k\}_{k=1}^\infty$, not relabelled, and some $\psi\in H^1(\Omega)$ such that $\psi_k\to \psi$ in $L^2(\Omega)$ and $\psi_k \rightharpoonup \psi$ in $H^1(\Omega)$.
Since $\psi_k$ weakly solves \eqref{GPBEk} with $u=u_k$, for any $\phi\in C^1_0(\Omega)$
\begin{equation}
\label{weak formulation 1}
\int_\Omega \left[ \epsilon(u_k) \nabla \psi_k \cdot\nabla \phi + (q_k-u_k^p) B'(\psi_k) \phi \right]\, dx =\int_\Omega \rho_m \phi\, dx.
\end{equation}
The dominated convergence theorem then implies that
\begin{equation}
\label{weak formulation 2}
\int_\Omega \left[ \epsilon(u_0) \nabla \psi  \cdot\nabla \phi + (1-u_0^p) B'(\psi ) \phi \right]\, dx =\int_\Omega \rho_m \phi\, dx.
\end{equation}
Note that,   \eqref{est for psi 1} and a standard approximation argument imply that \eqref{weak formulation 1}  and \eqref{weak formulation 2} hold for any $\phi\in H^1_0(\Omega)$. 
In view of Proposition~\ref{Prop: GPBE}, we infer that $\psi_0=\psi$. Next, we will show that
\begin{equation}
\label{H1 converg}
\lim\limits_{k\to \infty} \int_\Omega \epsilon(u_k) |  \nabla \psi_k- \nabla \psi_0|^2\, dx=0.
\end{equation}
Using $\phi=\psi_k-\psi_0$ as a test function  in \eqref{weak formulation 1}, we conclude that
$$
\lim\limits_{k\to \infty} \int_\Omega \epsilon(u_k)   \nabla \psi_k \cdot (\nabla \psi_k - \nabla \psi_0)\, dx=0.
$$
By the dominated convergence theorem, we have
\begin{align*}
  \lim\limits_{k\to \infty} \int_\Omega \epsilon(u_k)  |\nabla \psi_k|^2\, dx  
=   \lim\limits_{k\to \infty} \int_\Omega \epsilon(u_k)  \nabla \psi_k \cdot (\nabla \psi_k- \nabla \psi_0)\, dx + \lim\limits_{k\to \infty} \int_\Omega \epsilon(u_k)  \nabla \psi_k \cdot   \nabla \psi_0 \, dx.
\end{align*}
Note that  $\psi=\psi_0-\psi_\infty$ weakly solves the Dirichlet problem
\begin{equation*}
\left\{\begin{aligned}
\div (\epsilon(u_0) \nabla \psi)    &= (1- u_0^p) B'(\psi_0) -  \rho_m  -\div( \epsilon(u_0)\nabla \psi_\infty )   &&\text{in}&&\Omega;\\
\psi&=0   &&\text{on}&&\partial\Omega.
\end{aligned}\right.
\end{equation*} 
In view of  \eqref{est for psi 1},  $\epsilon(u_0)\nabla \psi_\infty$ and  $ (1- u_0^p) B'(\psi_0) -  \rho_m  $ belong to $  L^\infty(\Omega)$.
By the Calderon-Zygmund type estimates for uniformly elliptic equation, c.f. \cite[Theorem 1]{MR159110}, there exists   some 
$p_0>2$ 
such that $\psi_0\in W^{1,p_0}(\Omega)$. 
Note that \cite[Theorem 1]{MR159110} requires   $\Omega$ to be of class $\mathscr{D}^r$ for some $r>2$, cf. \cite[Formulas~(19) and (20)]{MR159110}. It follows from  \cite[Theorems~B and 3.1,   Lemma~4.1]{MR2141694} (by taking $T=\nabla (-\Delta)^{-1} \div $ in \cite[Theorem 3.1]{MR2141694}) and the Poincar\'e's inequality   that any Lipschitz domain satisfies this condition.
We thus infer from \eqref{u converg} that
\begin{equation}\label{H1 converg 2}
\lim\limits_{k\to \infty} \int_\Omega \epsilon(u_k)  \nabla \psi_k \cdot   \nabla \psi_0 \, dx=\int_\Omega \epsilon(u_0) | \nabla \psi_0 |^2\, dx,
\end{equation}
and in turn,
\begin{equation}\label{H1 converg 3}
\lim\limits_{k\to \infty} \int_\Omega \epsilon(u_k)  |\nabla \psi_k |^2\, dx= \int_\Omega \epsilon(u_0)  |\nabla \psi_0 |^2\, dx.
\end{equation}
The dominated convergence theorem, \eqref{H1 converg 2} and \eqref{H1 converg 3} imply that
\begin{align*}
  \lim\limits_{k\to \infty} \int_\Omega \epsilon(u_k) |  \nabla \psi_k- \nabla \psi_0|^2\, dx  
=  \lim\limits_{k\to \infty} \int_\Omega \epsilon(u_k) | \left( | \nabla \psi_k|^2 -2 \nabla \psi_k \cdot \nabla \psi_0 + | \nabla \psi_0|^2  \right)\, dx =0.
\end{align*}
This  establishes \eqref{H1 converg}. It follows from the Poincar\'e inequality that $\psi_k\to \psi_0$ in $H^1(\Omega)$.
The convergence $ G_{u_k}^k (\psi_k) \to G_{u_0}^0(\psi_0)$ then can be shown by using \eqref{H1 converg 3} and the dominated convergence theorem.
\end{proof}




\section{Properties of Global Minimizers}\label{Section 4}

 
The following theorem on the  existence and uniqueness of a minimizer of $I(\cdot)$ can be proved essentially in the same way as  \cite[Theorem~2.4]{Shao2022constrained} by using Propositions~\ref{Prop: GPBE converg}, \ref{A1: compact of BV} and \ref{A1: lsc of BV}.
\begin{theorem}\label{Thm: existence of global minimizer}
There exists a unique  $\um \in \cY$ such that $I(\um)=\underset{{u\in \cY}}{\min} I(u)$.
\end{theorem}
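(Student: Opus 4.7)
The plan is to follow the direct method of the calculus of variations for existence and exploit strict convexity for uniqueness, mirroring the scheme of \cite[Theorem~2.4]{Shao2022constrained}; the three propositions cited in the statement supply, respectively, $BV$-compactness of a minimizing sequence, $L^1$-lower semicontinuity of the total-variation seminorm, and continuity of the polar functional under $L^1$-convergence of $u$.

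For existence, take a minimizing sequence $\{u_k\}\subset\cY$. Because $0\le u_k\le 1$ pointwise and the polar energy $\Ip(u_k,\psi(u_k))=-G^0_{u_k}(\psi(u_k))$ is bounded below by $-\widetilde{C}_1$ via \eqref{bound Eu}, while the bulk terms $\int P_h u_k^p$ and $\int\rho_s(1-u_k^p)U^{\mathrm{vdW}}$ are uniformly bounded (since $u_k\in[0,1]$ and $U^{\mathrm{vdW}}$ is bounded), the bound $I(u_k)\le\inf_{\cY}I+1$ forces $\gamma\int_\Omega d|Du_k|$ to be uniformly bounded. Hence $\{u_k\}$ is bounded in $BV(\Omega)$, and Proposition~\ref{A1: compact of BV} yields $u_{k_j}\to u_0$ in $L^1(\Omega)$ along a subsequence (and pointwise a.e.\ after a further subsequence). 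The defining constraints of $\cY$ survive a.e.\ limits, so $u_0\in\cY$. The liminf inequality $I(u_0)\le\liminf_j I(u_{k_j})$ then follows by combining Proposition~\ref{A1: lsc of BV} for $\int d|Du|$, the dominated convergence theorem for the bulk terms (using $u_{k_j}^p\to u_0^p$ in every $L^r$), and Proposition~\ref{Prop: GPBE converg}---specifically conclusion \eqref{eq GPBEk converg 2} applied with $k=0$, which is permissible because each $u_{k_j}\in\cY_0=\cY$---for the polar term. This identifies $u_0$ as a global minimizer.

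For uniqueness, the task is to prove $I$ strictly convex on the convex set $\cY$. The total-variation term is convex; $\int P_h u^p$ is strictly convex because $p>1$ makes $s\mapsto s^p$ strictly convex on $[0,1]$; and $\int\rho_s(1-u^p)U^{\mathrm{vdW}}$ is convex since $U^{\mathrm{vdW}}\le 0$ (by the WCA decomposition) and $u^p$ is convex. For the polar contribution, invoke the dual characterization $\Ip(u,\psi(u))=\sup_{\psi\in\cA}\bigl\{-G^0_u(\psi)\bigr\}$ afforded by Proposition~\ref{Prop: GPBE}: for each fixed $\psi$, $-G^0_u(\psi)$ equals a $u$-independent quantity plus $\frac{1}{2}(\epsilon_s-\epsilon_m)\int u^p|\nabla\psi|^2+\int u^p B(\psi)$, both of which are convex in $u$ since $\epsilon_s>\epsilon_m$, $B\ge 0$, and $s\mapsto s^p$ is convex. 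A supremum of convex functions is convex, so $u\mapsto\Ip(u,\psi(u))$ is convex; adding the strictly convex term $\int P_h u^p$ makes $I$ strictly convex, whence uniqueness. The main obstacle is precisely this convexity analysis of the polar part---differentiating $\psi(u)$ directly in $u$ would be cumbersome, but the supremum representation neatly exhibits $\Ip(\cdot,\psi(\cdot))$ as an envelope of elementary convex functionals. A minor technicality is ensuring that the essential boundary conditions $u=1$ on $\Omega_m$ and $u=0$ on $\Omega_s$ pass to the $L^1$ limit, which is immediate along a pointwise a.e.\ convergent subsequence.
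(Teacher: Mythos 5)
Your proposal is correct and follows essentially the same route the paper indicates: the direct method (BV-compactness via Proposition~\ref{A1: compact of BV}, lower semicontinuity of the total variation via Proposition~\ref{A1: lsc of BV}, and continuity of the polar part via Proposition~\ref{Prop: GPBE converg}) for existence, and strict convexity of $u\mapsto I(u)$ --- with the polar contribution handled through the supremum representation $\Ip(u,\psi(u))=\sup_{\psi\in\cA}\Ip(u,\psi)$ and the strictness supplied by $\int_\Omega P_h u^p\,dx$ with $p>1$ --- for uniqueness. The sign checks you rely on ($\epsilon_s>\epsilon_m$, $B\ge 0$, $U^{\mathrm{vdW}}\le 0$) are all consistent with the paper's assumptions, so no gap remains.
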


To show the robustness of the model~\eqref{non penalized total energy functional final form}, one need to answer the question whether the   solvation energy $I(\um)$  depends continuously on $\Omega_m$ and $\Omega_s$ in a suitable topology?
The answer to the above question is affirmative.
We will present the proof of a partial result in this subsection. 
Due to the length of this article,  a complete answer will be presented  in a subsequent paper.

  

Assume that $\{ \widetilde{\Omega}_{m;n} \}_{n=1}^\infty $ and $\{ \widetilde{\Omega}_{s;n} \}_{n=1}^\infty$
are two sequences of Lipschitz subdomains  such that 
\begin{equation}
\label{sequence domain}
\bigcup\limits_i^{N_a} \overline{B}(x_i,\sigma)\subset \widetilde{\Omega}_{m;n} \subseteq \Omega_m \quad \text{and} \quad \widetilde{\Omega}_{s;n} \subseteq \Omega_s \quad \text{with }\partial\Omega\subset \partial\widetilde{\Omega}_{s;n}.
\end{equation}
We consider the sequence of energy functionals $\widetilde{I}_n (\cdot)$ defined by replacing $\Omega_m$ and $\Omega_s$ by  $\widetilde{\Omega}_{m;n} $ and $\widetilde{\Omega}_{s;n}$ in $I(\cdot)$,  respectively. 
The corresponding admissible spaces are
\begin{align*}
\widetilde{\cY}_n=\{   u\in BV(\Omega): \,   0 \leq u \leq 1 \text{ a.e. in }\Omega \quad \text{and}\quad u=1\text{ a.e. in }\widetilde{\Omega}_{m;n}  
 \text{ and } u=0\text{ a.e. in }\widetilde{\Omega}_{s;n}\} .
\end{align*}

\begin{theorem}\label{Thm: free energy converg}
Assume \eqref{sequence domain} and  as $n\to \infty$
\begin{equation}
\label{eq: free energy converg 1}
\chi_{\widetilde{\Omega}_{m;n}}\to \chi_{\Omega_m} \quad \text{and} \quad \chi_{\widetilde{\Omega}_{s;n}}\to \chi_{\Omega_s} \quad \text{in } L^1(\Omega).
\end{equation}
Then for each $n$, there is a unique minimizer $u_n$ of $\widetilde{I}_n (\cdot)$ in $\widetilde{\cY}_n$.
Moreover, 
 $\lim\limits_{n\to \infty}  \widetilde{I}_n (u_n) =I(\um)$.	 
\end{theorem}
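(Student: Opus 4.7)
The existence and uniqueness of the minimizer $u_n$ for each fixed $n$ is immediate from Theorem~\ref{Thm: existence of global minimizer}: the triple $(\widetilde{\Omega}_{m;n}, \widetilde{\Omega}_{s;n}, \Omega)$ satisfies the same structural hypotheses as $(\Omega_m, \Omega_s, \Omega)$---the atomic support condition \eqref{ASP: atom supp} for $\widetilde{\Omega}_{m;n}$ follows from the first inclusion in \eqref{sequence domain}---and the integrand defining $\widetilde{I}_n$ agrees with that of $I$ (only the admissible set differs). The plan for the energy convergence is a two-sided sandwich: first prove $\limsup_{n\to\infty}\widetilde{I}_n(u_n) \leq I(\um)$ via $\um$ as a competitor, then extract a convergent subsequence of minimizers and obtain a matching lower bound using $BV$-compactness together with Proposition~\ref{Prop: GPBE converg}.

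For the upper bound, since $\widetilde{\Omega}_{m;n} \subseteq \Omega_m$ and $\widetilde{\Omega}_{s;n} \subseteq \Omega_s$, one reads off $\cY \subseteq \widetilde{\cY}_n$, so $\um \in \widetilde{\cY}_n$ and by minimality $\widetilde{I}_n(u_n) \leq \widetilde{I}_n(\um) = I(\um)$. Because the nonpolar bulk terms are uniformly controlled (thanks to $0 \leq u_n \leq 1$) and the polar portion $-G_{u_n}^0(\psi_{u_n})$ is bounded below by $-\widetilde{C}_1$ via \eqref{bound Eu}, this forces a uniform upper bound on $\gamma \int_\Omega d|Du_n|$. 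Hence $\{u_n\}$ is bounded in $BV(\Omega)$ and Proposition~\ref{A1: compact of BV} produces, after passing to a subsequence, some $u^* \in BV(\Omega)$ with $u_n \to u^*$ in $L^1(\Omega)$ and $0 \leq u^* \leq 1$ a.e. The hypothesis \eqref{eq: free energy converg 1} combined with $u_n = 1$ on $\widetilde{\Omega}_{m;n}$ gives $\int_{\Omega_m}|u_n - 1|\,dx \leq \cL^3(\Omega_m \setminus \widetilde{\Omega}_{m;n}) \to 0$, so $u^* = 1$ a.e.\ on $\Omega_m$; the symmetric computation yields $u^* = 0$ a.e.\ on $\Omega_s$. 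Therefore $u^* \in \cY$.

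For the lower bound, Proposition~\ref{Prop: GPBE converg} (the $k \equiv 0$ version; its proof uses only the $L^\infty$-bound on $u_n$ and the $L^1$-convergence $u_n \to u^*$, not the boundary constraint \eqref{constrain 2}) delivers $\psi_{u_n} \to \psi_{u^*}$ strongly in $H^1(\Omega)$ and $G_{u_n}^0(\psi_{u_n}) \to G_{u^*}^0(\psi_{u^*})$, which accounts for every polar term in $\widetilde{I}_n(u_n)$. The nonpolar bulk integrals pass to the limit by dominated convergence (using $\|u_n\|_\infty \leq 1$ and a mean-value bound for $s \mapsto s^p$), and the total variation term is lower semicontinuous under $L^1$-convergence by Proposition~\ref{A1: lsc of BV}. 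Assembling these pieces yields $I(u^*) \leq \liminf_n \widetilde{I}_n(u_n) \leq \limsup_n \widetilde{I}_n(u_n) \leq I(\um)$; uniqueness of the minimizer of $I$ on $\cY$ then forces $u^* = \um$ and equality throughout, and a standard subsequence argument upgrades this to convergence along the full sequence.

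The principal obstacle is the concave term $-\tfrac{1}{2}\int_\Omega \epsilon(u)|\nabla\psi|^2\,dx$, which is not lower semicontinuous under mere weak $H^1$-convergence of $\psi_{u_n}$ and would wreck the sandwich if the gradients converged only weakly. It is precisely the strong $H^1$-convergence in Proposition~\ref{Prop: GPBE converg}---itself a nontrivial consequence of the Meyers-type higher integrability $\psi_{u^*} \in W^{1,p_0}(\Omega)$ available on every Lipschitz domain---that allows the full polar energy to pass to the limit.
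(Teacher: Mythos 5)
Your proposal is correct and follows essentially the same route as the paper: use $\um$ as a competitor in $\widetilde{\cY}_n$ (valid since $\widetilde{\Omega}_{m;n}\subseteq\Omega_m$ and $\widetilde{\Omega}_{s;n}\subseteq\Omega_s$) to get the upper bound and a uniform $BV$ bound, extract an $L^1$-convergent subsequence via Proposition~\ref{A1: compact of BV}, and close the sandwich with Proposition~\ref{A1: lsc of BV}, Proposition~\ref{Prop: GPBE converg} and dominated convergence. Your explicit verification that the limit $u^*$ lies in $\cY$ (via $\cL^3(\Omega_m\setminus\widetilde{\Omega}_{m;n})\to 0$) fills in a step the paper leaves implicit, but it is not a different argument.
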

\begin{proof}
The existence and uniqueness of a minimizer of $\widetilde{I}_n(\cdot)$ in $\widetilde{\cY}_n$ for each $n$ follows from Theorem~\ref{Thm: existence of global minimizer}.
Observe that $\um \in \widetilde{\cY}_n$ for all $n$. Thus
$$
\widetilde{I}_n (u_n) \leq  I(\um)=\widetilde{I}_n(\um).
$$
This implies that
\begin{align*}
\gamma\int_\Omega  d|D u_n| +    P_h \|u_n\|_p^p + \rho_s  \int_{\Omega\setminus \Omega_m}   U^{\rm vdW}  \, dx  -\widetilde{C}_1 \leq I(\um),
\end{align*}
where $\widetilde{C}_1$ is the constant in \eqref{bound Eu}. 
Therefore, $\| u_n\|_{BV}$ is uniformly bounded with respect to $n$.
Proposition~\ref{A1: compact of BV} implies that 
there exists a  subsequence, not relabelled, and some $u\in BV(\Omega)$  such that $u_n\to u$ in $L^1(\Omega)$. 
From  Propositions~\ref{A1: lsc of BV}, Propositions~\ref{Prop: GPBE converg} and the dominated convergence theorem, we infer that
\begin{align*}
I(\um) \leq I(u)\leq \liminf\limits_{n\to \infty} \widetilde{I}_n (u_n) \leq \limsup\limits_{n\to \infty} \widetilde{I}_n (u_n) \leq I(\um).
\end{align*}
This proves the convergence assertion.
\end{proof}

A case of particular interest is when $\Omega_t=\emptyset$, that is, $\Omega=\Omega_m \cup \Gamma  \cup\Omega_s$ with   $\Gamma=\partial\Omega_m\cap \partial \Omega_s$ being the Lipschitz sharp interface separating the solute and solvent regions.
Further, suppose that $\Omega_m\subset\subset \Omega$.
In this case, \eqref{non penalized total energy functional final form} reduces to a sharp interface model.  
The corresponding sharp-interface solvation free energy   is given by the one  proposed in 
\cite{PhysRevLett.96.087802, doi:10.1063/1.2171192}  
\begin{align}\label{Sharp interface energy functional}
 E_0  
=&   \gamma \Per(\Omega_m;\Omega)+ P_h \cL^3(\Omega_m)   + \int_{\Omega_s}     \rho_s U^{\mathrm{vdW}}  \, dx + G_{\rm ele}(\Omega_m),
\end{align}
where $\Per(\Omega_m; \Omega)$ is the perimeter  of $\Omega_m$ in $\Omega$, see Appendix~\ref{Appendix A}, and 
$G_{\rm ele}(\Omega_m)$ is the electrostatic free energy.  
In the classic Poisson-Boltzmann theory, it is defined by 
\begin{align*}
G_{\rm ele}(\Omega_m)=      \int_{\Omega_m}   \left[     \rho_m \psi    - \frac{\epsilon_m}{2} |\nabla \psi|^2 \right]\, dx  
  - \int_{\Omega_s} \left[ \frac{\epsilon_s}{2} |\nabla \psi|^2  +   B(\psi)    \right]\, dx,
\end{align*}
cf. \cite{ANDELMAN1995603, doi:10.1021/jp7101012, doi:10.1021/cr00101a005, MR2491589, doi:10.1146/annurev.bb.19.060190.001505, doi:10.1063/1.466406, doi:10.1021/ct401058w}.
The electrostatic potential
$\psi $ solves the classic sharp-interface Poisson-Boltzmann equation:
\begin{equation*}
\left\{\begin{aligned}
\div ( (\epsilon_m \chi_{\Omega_m}+\epsilon_s \chi_{\Omega_s}) \nabla \psi)  -\chi_{\Omega_s} B'(\psi)&=  -  \rho_m  &&\text{in}&&\Omega;\\
\psi&= \psi_\infty   &&\text{on}&&\partial\Omega.
\end{aligned}\right.
\end{equation*}

The following corollary shows that \eqref{Sharp interface energy functional} is in some sense the limiting case of our diffuse interface model.
\begin{cor}
Assume that $\Omega=\Omega_m \cup \Gamma  \cup\Omega_s$ and $\Gamma=\partial\Omega_m\cap \partial \Omega_s$ is Lipschitz.
Further, suppose that $\Omega_m\subset\subset \Omega$.
Under the same assumptions as in Theorem~\ref{Thm: free energy converg}, 
$\lim\limits_{n\to \infty}  \widetilde{I}_n (u_n) =E_0.$ 
\end{cor}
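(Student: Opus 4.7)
The plan is to reduce the corollary to Theorem~\ref{Thm: free energy converg} by first identifying the unique element of $\cY$ with $\chi_{\Omega_m}$, and then verifying term-by-term that $I(\chi_{\Omega_m})=E_0$.

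First, I would observe that in the sharp-interface geometry $\Omega=\Omega_m\cup\Gamma\cup\Omega_s$, the Lipschitz interface $\Gamma$ has zero Lebesgue measure, so the constraints \eqref{constrain 1}--\eqref{constrain 2} force every $u\in\cY$ to agree with $\chi_{\Omega_m}$ almost everywhere on $\Omega$. Since $\Omega_m\subset\subset\Omega$ has Lipschitz boundary, $\chi_{\Omega_m}\in BV(\Omega)$, so $\cY$ reduces to a single equivalence class. In particular, Theorem~\ref{Thm: existence of global minimizer} forces $\um=\chi_{\Omega_m}$ almost everywhere.

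Next, I would compute $I(\chi_{\Omega_m})$ directly. The standard identity $\int_\Omega d|D\chi_{\Omega_m}|=\Per(\Omega_m;\Omega)$, together with the pointwise equalities $\chi_{\Omega_m}^p=\chi_{\Omega_m}$ and $1-\chi_{\Omega_m}^p=\chi_{\Omega_s}$ a.e., show that the nonpolar portion of $I(\chi_{\Omega_m})$ equals $\gamma\Per(\Omega_m;\Omega)+P_h\cL^3(\Omega_m)+\rho_s\int_{\Omega_s}U^{\mathrm{vdW}}\,dx$, i.e.\ the nonpolar part of $E_0$ in \eqref{Sharp interface energy functional}. For the polar portion, the same substitution gives the piecewise constant dielectric $\epsilon(\chi_{\Omega_m})=\epsilon_m\chi_{\Omega_m}+\epsilon_s\chi_{\Omega_s}$, so that \eqref{GPBE} collapses to the classical sharp-interface Poisson-Boltzmann equation, whose unique weak solution $\psi$ is furnished by Proposition~\ref{Prop: GPBE}. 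Splitting the polar integrand over $\Omega_m$ and $\Omega_s$ (the contribution from $\Gamma$ vanishes since $\cL^3(\Gamma)=0$), and using $\supp\rho_m\subseteq\Omega_m$, the polar portion of $I(\chi_{\Omega_m})$ reduces to exactly $G_{\rm ele}(\Omega_m)$. Summing the contributions yields $I(\um)=E_0$.

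Finally, I would apply Theorem~\ref{Thm: free energy converg} to the prescribed sequences $\{\widetilde{\Omega}_{m;n}\}$ and $\{\widetilde{\Omega}_{s;n}\}$: its hypotheses are assumed, so it delivers $\widetilde{I}_n(u_n)\to I(\um)=E_0$. The main obstacle here is conceptual rather than technical: one must be certain that the passage from the diffuse-interface GPBE to the sharp-interface PBE identifies the same electrostatic potential, and that is precisely where Proposition~\ref{Prop: GPBE} (together with the convergence statement in Proposition~\ref{Prop: GPBE converg}) does the heavy lifting. Everything else is bookkeeping.
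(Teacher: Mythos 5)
Your proposal is correct and is essentially the argument the paper intends (the corollary is stated without an explicit proof precisely because it reduces to this observation): since $\cL^3(\Gamma)=0$, the constraints collapse $\cY$ to the single class $\chi_{\Omega_m}$, the functional evaluates term-by-term to $E_0$ with \eqref{GPBE} becoming the sharp-interface Poisson--Boltzmann equation, and Theorem~\ref{Thm: free energy converg} then gives $\widetilde{I}_n(u_n)\to I(\um)=E_0$. No gaps.
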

 
\medskip
\begin{remark}
In a subsequent paper,  we will show that, under mild regularity assumption on $\Sigma_1$ and $\Sigma_0$, the conditions $\widetilde{\Omega}_{m;n} \subseteq \Omega_m$ and $\widetilde{\Omega}_{s;n} \subseteq \Omega_s$ in Theorem~\ref{Thm: free energy converg} can be relaxed.
\end{remark}


\section{How to Exclude the Formation of Sharp Interfaces?}\label{Section 5}

In Theorem~\ref{Thm: existence of global minimizer}, we have shown that there is a unique characterizing function $\um\in BV(\Omega)$ minimizing \eqref{non penalized total energy functional final form} in $\cY$.
However, since $BV-$functions allow jump discontinuities, a natural question to ask is whether the minimizing energy state is achieved by a sharp interface between the solute and solvent regions, or equivalently, whether the characterizing function $\um$ is the characteristic function of a set of finite perimeter. 

To simplify the analysis, we will focus on the nonpolar portion of the solvation energy, i.e. \eqref{nonpolar energy}.
Motived by  the idea in \cite{MR3148636, MR2815736, MR2368971}, we will show  that when the mean curvature of $\Sigma_0$ is positive at some point, the energy minimizing state is never achieved by a sharp interface. See Theorem~\ref{Thm: sharp interface}. 


\subsection{Necessary Conditions for the Minimizer of Nonpolar Energy}\label{Section 5.1}
Throughout this section, we assume that $\Omega_t \neq \emptyset$.
First consider the minimization problem of the nonpolar energy
\begin{align}\label{minimization pb nonpolar 1}
\Inp(u) =& \gamma \int_\Omega d|Du| + \int_\Omega \left[ P_h u  + \rho_s (1-u^p) U^{\mathrm{vdW}}   \right] \, dx   
\end{align}
in the admissible space
$$
\cX=\{u\in BV(\Omega): \, u \, \text{ satisfies Constraint  } \eqref{constrain 2}  \}.
$$
One will show that the  minimizer $\um$ of  \eqref{minimization pb nonpolar 1} automatically satisfies Constraint  \eqref{constrain 1}.
The reason to exclude \eqref{constrain 1} in the definition of the admissible space is due to the following consideration.
Any subdifferential of $\Inp(\cdot)$ with Constraint~\eqref{constrain 1} contains a function which may be discontinuous along $\partial \{\um=1\}$ and $\partial \{\um=0\}$. 
This will prevent us from establishing the continuity of $\um$ in these two sets.

\medskip
\begin{theorem}\label{Thm: existence and uniqueness minimizer - nonpolar}
\eqref{minimization pb nonpolar 1}  has a unique minimizer $\um\in \cX$, which satisfies Constraint~\eqref{constrain 1}. 
\end{theorem}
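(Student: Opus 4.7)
The plan is to follow the direct method of the calculus of variations in parallel with Theorem~\ref{Thm: existence of global minimizer}, and to obtain Constraint~\eqref{constrain 1} as a free consequence via a truncation argument. Since $\cX$ does not impose pointwise bounds on $u$, the additional ingredient is to show that any competitor may be replaced by its $[0,1]$-truncation without increasing the energy.

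First I would observe that $U^{\mathrm{vdW}}\le 0$ and is uniformly bounded below by the WCA construction of Section~\ref{Section 2}, so the pointwise integrand $s\mapsto P_h s^p + \rho_s(1-s^p) U^{\mathrm{vdW}}(x)$ is bounded below uniformly in $x$ and grows like $|s|^p$ with $p>1$; together with $\|Du\|(\Omega)\ge 0$ this shows $\Inp$ is bounded below on $\cX$. Next, for $u\in\cX$ set $\tilde u:=(u\vee 0)\wedge 1$. Constraint~\eqref{constrain 2} is preserved because $u$ already equals $1$ on $\Omega_m$ and $0$ on $\Omega_s$. Because $p=\frac{2N}{2N-1}$ has even numerator and odd denominator, $u^p$ is naturally defined for every $u\in\R$ and equals $|u|^p\ge 0$; the map $s\mapsto s^p$ is strictly convex on $\R$ and attains its minimum at $s=0$, so $\tilde u^p\le u^p$ pointwise, with strict inequality wherever $u\notin[0,1]$. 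The standard BV-chain-rule inequality $\|D\tilde u\|(\Omega)\le\|Du\|(\Omega)$ handles the total variation term, and the sign condition $U^{\mathrm{vdW}}\le 0$ gives $(1-\tilde u^p)U^{\mathrm{vdW}}\le(1-u^p)U^{\mathrm{vdW}}$. Hence $\Inp(\tilde u)\le \Inp(u)$, strictly whenever \eqref{constrain 1} fails on a set of positive Lebesgue measure.

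Existence then follows by the direct method: take a minimizing sequence and replace each element by its truncation so that $0\le u_n\le 1$ a.e.; the $L^\infty$-bound together with the energy bound yields a uniform $BV$-bound, so Proposition~\ref{A1: compact of BV} produces a subsequence converging in $L^1(\Omega)$ to some $\um\in BV(\Omega)$. Constraint~\eqref{constrain 2} passes to the $L^1$-limit, the $L^1$-lower semicontinuity of $|Du|(\Omega)$ (Proposition~\ref{A1: lsc of BV}) handles the total variation, and dominated convergence for the bounded lower-order terms gives $\Inp(\um)\le\liminf_n\Inp(u_n)$.

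For uniqueness, since any minimizer satisfies \eqref{constrain 1} by the truncation step, I restrict attention to the convex set $\cX\cap\{0\le u\le 1\}$. On this set $s\mapsto s^p$ is strictly convex ($p>1$), so $\int_\Omega P_h u^p\,dx$ is strictly convex in $u$; the total variation is convex and $-\rho_s\int_\Omega U^{\mathrm{vdW}} u^p\,dx$ is convex since $-U^{\mathrm{vdW}}\ge 0$. Strict convexity of even one summand excludes two distinct minimizers. The main subtlety is the truncation step, which relies essentially on the sign condition $U^{\mathrm{vdW}}\le 0$ supplied by the WCA decomposition and on the good extension of $u\mapsto u^p$ to negative arguments arising from the specific choice of $p$; once these are in place, the remainder follows the blueprint of Theorem~\ref{Thm: existence of global minimizer}.
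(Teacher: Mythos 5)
Your argument is correct and essentially the paper's own proof: existence and uniqueness come from the direct method together with the strict convexity of $s\mapsto s^p$ ($p>1$), and Constraint~\eqref{constrain 1} is obtained by showing that the $[0,1]$-truncation strictly decreases the energy --- exactly the contradiction the paper runs on the minimizer itself, which you merely apply to the minimizing sequence instead, while also supplying the details (sign of $U^{\mathrm{vdW}}$, meaning of $u^p$ for $u<0$) that the paper compresses into ``direct computations.'' The only point worth flagging is that you silently read the $P_h u$ term in \eqref{minimization pb nonpolar 1} as $P_h u^p$, consistent with \eqref{nonpolar energy} and the later subdifferential $pP_h u^{p-1}$; this reading is clearly the intended one, and it is in fact needed, since with a literal linear term $P_h u$ the truncation of a negative $u$ to $0$ would increase that term and your monotonicity chain would not close.
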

\begin{proof}
Note that $\cX$ is closed and convex in $BV(\Omega)$.
Based on the strict convexity, lower semicontinuity of $\Inp$ and the direct method of Calculus of Variation, we can readily establish the existence and uniqueness of a global minimizer $\um$.
If $\cL^3(\{\um>1\}\cup \{\um<0\})>0$, let
\begin{align*}
\widetilde{u}_{\rm min}(x)=
\begin{cases}
1 \quad &\text{when } \um (x)>1;\\
0 &\text{when } \um(x)<0;\\ 
\um(x) & \text{elsewhere}.
\end{cases}
\end{align*}
Direct computations show that $\Inp(\widetilde{u}_{\rm min}) < \Inp (\um)$. A contradiction. Therefore, $0\leq \um\leq 1$ a.e. in $\Omega$.
\end{proof}


Next, we   derive  necessary conditions for the minimizer of \eqref{minimization pb nonpolar 1}.
We will use tools from non-smooth analysis, c.f. \cite{MR1727362, MR1313551, MR1637956}, to derive the subdifferential of \eqref{minimization pb nonpolar 1}. 
However,  very little is known about the dual space of  $BV(\Omega)$. 
To overcome this difficulty and tackle the Constraint~\eqref{constrain 2},  we will   consider  $\Inp$ as a functional defined on  $L^p(\Omega)$ and include two  extra terms.
Define
\begin{equation}
\label{minimization pb nonpolar 2}
\Enp(u)= \Inp (u) + \gamma \int_{\partial\Omega} | \Tr u|\, d\cH^2 + I_K(u)  
\end{equation}
in $L^p(\Omega)$, where $\Tr u$ is the trace of $u$ on $\partial\Omega$ and 
$$
K=\{u\in L^p(\Omega):  \,    u=1\text{ in }\Omega_m,   
 \text{ and } u=0\text{  in } \Omega_s  \text{ a.e.} \}
$$
and $I_K$ is the indicator function of $K$. In addition, we put
$$
E_1(u)=\gamma \|D u\|(\Omega) +\gamma \int_{\partial\Omega} | \Tr u|\, d\cH^2  , 
$$
and
$$
E_2(u)= \int_{\Omega} \left[ P_h u^p  + \rho_s (1-u^p) U^{\mathrm{vdW}} \right] \, dx.
$$
The latter is Lipschitz continuous in $L^p(\Omega)$. 
It is understood that
\begin{align*}
E_1(u)  =
\begin{cases}
\gamma \|D u\|(\Omega) +\gamma \int_{\partial\Omega} | \Tr u|\, d\cH^2   \quad & \text{if }u\in    BV(\Omega)  \\
+\infty & \text{if } u\in L^p(\Omega) \setminus  BV(\Omega) .
\end{cases}
\end{align*}
So, $\dom(E_1)=  BV(\Omega)$ and $\dom (I_K)=K$.
Using these notations, we can restate Problem~\eqref{minimization pb nonpolar 2} as to minimize a functional $\Enp: L^p(\Omega) \to \R\cup \{\infty\}$ defined by
\begin{equation}
\label{miimizing unconstrained pb}
\Enp(u):=E_1(u) + E_2(u)+I_K(u).
\end{equation}
Direct computations show that $\um$ minimizes \eqref{minimization pb nonpolar 1} in $\cX$ iff it minimizes
$\Enp(\cdot)$ in $L^p(\Omega)$.


Note that   $K$ is closed and convex in $L^p(\Omega)$. This  implies that $I_K$ is convex and lower semicontinuous. 
What is more, by the definition of subdifferentials, for every $u\in K$, $u^*\in \partial I_K (u)$ iff
$$
\langle u^*, u \rangle \geq \langle u^*, v \rangle ,\quad \forall v \in K.
$$
Here  $\langle \cdot,\cdot \rangle$ is the duality pairing between $L^p(\Omega)$ and $L^{p'}(\Omega)$, that is
$$
\langle v,w \rangle=\int_\Omega vw \, dx ,\quad v \in L^p(\Omega),\, w\in L^{p'}(\Omega).
$$
If $\cL^3(\{u^*> 0\}\cap \Omega_t)>0$, set $D=\{u^*> 0\}\cap \Omega_t$.
We define
\begin{align*}
v(x)=
\begin{cases}
u(x)+1 ,\quad & x\in D\\
u(x), & \text{elsewhere}.
\end{cases}
\end{align*}
Then $v\in K$  and
$$
\langle u^*, v-u \rangle >0.
$$
A contradiction. Similarly, we can show that $\cL^3(\{u^*< 0\}\cap \Omega_t)=0$. Thus, $u^*=0$ a.e. in $\Omega_t$. This is also the sufficient condition of $u^*\in \partial I_K (u)$. 
Indeed, given any $u^*\in L^{p'}(\Omega)$ with $u^*=0$ a.e. in $\Omega_t$, for any $v\in K$,
$$
\langle u^*, v-u \rangle = \int_{\Omega\setminus \Omega_t} u^*(u-v)\, dx + \int_{\Omega_t} u^*(u-v)\, dx = 0.
$$
To sum up, a function $u^*\in L^{p'}(\Omega)$ belongs to $\partial I_K(u)$ iff  $u^*=0$ in $ \Omega_t$.

To compute $\partial E_1(u)$,  we define 
$$
X^\infty_{p'}:=\{z\in L^\infty(\Omega, \R^3):\, \div z \in L^{p'}(\Omega)\} .
$$
Here, $\div z\in L^{p'}(\Omega)$ means that there exists $f\in L^{p'}(\Omega)$ such that 
$$
\int_\Omega f \phi\, dx = -\int_\Omega z\cdot \nabla \phi \, dx
$$
for all $\phi\in C_0^\infty(\Omega)$. Given any $u\in BV(\Omega) $ and $z\in X^\infty_{p'}$, there exists a Radon measure, denoted by $z\cdot Du$, such that for any $\phi\in C_0^\infty(\Omega)$, with a little abuse of notation,
$$
\langle z\cdot Du, \phi \rangle := \int_\Omega \phi( z\cdot Du) = -\int_\Omega u z \div\phi\, dx - \int_\Omega u\phi \div z\, dx.
$$
The measure $z\cdot Du$ is absolutely continuous with respect to $|Du|$. By the Radon-Nikodym Theorem, there is a $|Du|$-measurable function $\theta(z,Du)$ s.t. 
\begin{equation}
\label{density}
\int_A z\cdot Du =\int_A \theta(z,Du) d|Du|
\end{equation}
for all Borel sets $A\subseteq \Omega$. 
Let
\begin{equation*}
M^*_{p'}:=\{v^*\in L^{p'}(\Omega): \,  v^*= - \div z \text{ for some } z\in X^\infty_{p'} \text{ with } \|z\|_\infty \leq 1\}.
\end{equation*}
One can  follow the idea of \cite[Proposition~4.23(1)]{MR2348842} and prove that 
$$
u^*\in \partial E_1(u)\quad \text{iff} \quad E_1(u)=\gamma \langle u^*, u \rangle   ,\quad u^*\in M^*_{p'},
$$
that is,
\begin{equation}
\label{subdiff of E1}
E_1(u)= - \gamma \int_\Omega u \div z \, dx = \gamma \int_\Omega z\cdot Du  -\gamma	\int_{\partial\Omega} (z\cdot \nu_{\partial\Omega}) u \, d\cH^2 
\end{equation}
for some $z\in X^\infty_{p'} $ with $ \|z\|_\infty \leq 1$, where $\nu_{\partial\Omega}$ is the outward unit normal of $\Omega$.
The last equality follows from \cite[Theorem~1.9]{MR750538}.
In addition,  \cite[Corollary~1.6]{MR750538} shows that $\|z\|_\infty=1$ whenever $u\neq 0$.

Next, Proposition~\ref{Prop: Frechet} implies that for any $u\in L^p(\Omega)$, 
$$
\partial E_2(u)=p P_h u^{p-1}  - p\rho_s u^{p-1} U^{\rm vdW}.
$$
Because of the lack of continuity of $E_1$ and $I_K$, in general, we can only conclude that $\partial  E_1(u) + \partial I_K(u) \subseteq \partial (E_1+I_K)(u)$.
In order to compute $\partial (E_1+I_K)(u)$, we will use Propositions~\ref{Prop: sum rule of subdifferential 2}.
It suffices to verify the closed linear space  condition. 
An easy computation shows that 
$$
\dom(E_1)-\dom(I_K)=\{v\in L^p(\Omega):\, v|_{\Omega_m\cup \Omega_s}\in BV(\Omega_m\cup \Omega_s)\},
$$
which is obviously a linear subspace of $L^p(\Omega)$. We learn from Propositions~\ref{A1: lsc of BV} and \ref{A1: extension of BV} that $\dom(E_1)-\dom(I_K)$ is closed.
Now Proposition~\ref{Prop: sum rule of subdifferential 2} immediately implies that
$$
\partial (E_1+I_K)(u)=\partial E_1 (u)+ \partial I_K(u).
$$
We thus have
\begin{equation}
\label{eq: sum of subdiff 2}
\partial E (u)=\partial E_1 (u)+\partial E_2 (u)+\partial I_K(u).
\end{equation}

From the definition of subdifferential and \eqref{eq: sum of subdiff 2}, we learn that  
$$
u \in \cX \text{ minimizes }\eqref{minimization pb nonpolar 2} \quad \text{iff} \quad  0\in \partial E (u)=\partial E_1 (u)+\partial E_2 (u)+\partial I_K (u).
$$
More precisely, this means that there is some $z\in X^\infty_{p'}$ with $\|z\|_\infty = 1$  satisfying \eqref{subdiff of E1} and $w\in L^{p'}(\Omega)$ with $w\equiv 0$ in $\Omega_t $   such that
\begin{equation}
\label{necessary cond minimizer nonpolar-0}
0= -\gamma\div z + p \um^{p-1} \left( P_h - \rho_s  U^{\mathrm{vdW}} \right) + w   \quad \text{in } \Omega,
\end{equation}
where $z$ satisfies
$$
\int_\Omega z\cdot D\um= -\int_\Omega  \um \div z\, dx = \|D \um\|(\Omega)  .
$$
In particular, it holds that
\begin{equation*}
0= -\gamma\div z +p \um^{p-1} \left( P_h - \rho_s  U^{\mathrm{vdW}} \right)   \quad \text{in } \Omega_t .
\end{equation*}

\subsection{Regularity of the Minimizer $\um$}\label{Section 5.2}

As in the previous subsection, $\um$ is the minimizer of \eqref{minimization pb nonpolar 2} in $L^p(\Omega)$.
Set 
\begin{equation}
\label{super-level}
E_t:=\{\um > t\}, \quad t\in [0,1)
\end{equation}
to be the super-level sets of $\um$. Recall $\Omega_w=\Omega\setminus \overline{\Omega}_s$.
\begin{prop}
\label{Prop: nonpolar to finite perimeter}
For all $t\in [0,1)$, $E_t$ is a solution of 
\begin{equation}
\label{Mini pb: finite perimeter}
 \underset{E \in \cM}{\min} \left[ \gamma \Per(E;\Omega)  +   \int_{E } pt^{p-1 }\left(  P_h-  \rho_s U^{\mathrm{vdW}}     \right) \, dx \right],
\end{equation}
where   the minimum is taken in the set
$$
\cM=\{E\subset \Omega \text{ is of finite perimeter} : \, \Omega_m\subseteq E  \subseteq \Omega_w\}.
$$
\end{prop}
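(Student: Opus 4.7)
The plan is to reduce the scalar minimisation of $\Inp$ to a family of geometric minimisations, one for each level $t\in[0,1)$, via the coarea formula and a layer-cake identity, and then extract the per-level conclusion through a competitor argument. For any admissible $u\in\cX$, Constraint~\eqref{constrain 2} together with $\partial\Omega\subset\partial\Omega_s$ forces $\Tr u=0$ on $\partial\Omega$, so $E_1(u)=\gamma\|Du\|(\Omega)$. Since $0\le u\le 1$ a.e.\ by Theorem~\ref{Thm: existence and uniqueness minimizer - nonpolar}, the $BV$-coarea formula $\|Du\|(\Omega)=\int_0^1\Per(\{u>t\};\Omega)\,dt$ combined with the layer-cake identity $u^p(x)=\int_0^1 pt^{p-1}\chi_{\{u>t\}}(x)\,dt$ yields the representation
\begin{equation}\label{eq: coarea-rep}
\Inp(u)-\rho_s\!\int_\Omega U^{\mathrm{vdW}}\,dx=\int_0^1 J_t(E_t(u))\,dt,
\end{equation}
where $E_t(u):=\{u>t\}$ and $J_t(E):=\gamma\Per(E;\Omega)+\int_E pt^{p-1}(P_h-\rho_s U^{\mathrm{vdW}})\,dx$. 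Constraint~\eqref{constrain 2} further guarantees $\Omega_m\subseteq E_t(u)\subseteq\Omega_w$, so $E_t(\um)\in\cM$ for every $t\in[0,1)$.

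To obtain the per-level optimality, I would fix $t_0\in[0,1)$ and an arbitrary competitor $F\in\cM$, and introduce the modified function
\[
v:=(\um\vee t_0)\chi_F+(\um\wedge t_0)\chi_{\Omega\setminus F}.
\]
The inclusions $\Omega_m\subseteq F\subseteq\Omega_w$ together with $\um\in\cX$ give $v\in\cX$, and a direct computation of the super-level sets yields $\{v>t\}=F\cap E_t(\um)$ for $t\ge t_0$ and $\{v>t\}=F\cup E_t(\um)$ for $t<t_0$. Substituting into \eqref{eq: coarea-rep} and invoking the minimality of $\um$ produces the integrated inequality
\[
\int_0^{t_0}\!\bigl[J_t(F\cup E_t)-J_t(E_t)\bigr]\,dt+\int_{t_0}^1\!\bigl[J_t(F\cap E_t)-J_t(E_t)\bigr]\,dt\ge 0,
\]
with $E_t:=E_t(\um)$. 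The submodularity of the perimeter together with the pointwise identity $\chi_{F\cap E_t}+\chi_{F\cup E_t}=\chi_F+\chi_{E_t}$ yields $J_t(F\cap E_t)+J_t(F\cup E_t)\le J_t(F)+J_t(E_t)$, and combining this with the integrated inequality above (after an appropriate rearrangement) should force the pointwise comparison $J_{t_0}(E_{t_0})\le J_{t_0}(F)$, which is exactly the claimed minimality.

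The hard part is the passage from the integrated inequality to the pointwise-in-$t_0$ statement: replacing the super-level set only at the single level $t_0$ leaves $\um$ unchanged, so the comparison has to be performed on a set of levels of positive measure and then localised. I would handle this in two stages. First, Lebesgue differentiation applied to the integrated inequality (after rearrangement via submodularity) yields the conclusion for a.e.\ $t_0\in(0,1)$. Second, the conclusion is upgraded to \emph{every} $t_0\in[0,1)$ by exploiting the right-continuity $E_{t_0}=\bigcup_{s>t_0}E_s$ of the super-level sets, the continuity of the coefficient $pt^{p-1}$ in $t$, and the lower semicontinuity of the perimeter under monotone unions—standard tools in the Chambolle-type theory of $BV$-minimisers.
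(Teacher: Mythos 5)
Your setup is sound and your competitor is correctly analysed: the coarea/layer-cake representation $\Inp(u)=\rho_s\int_\Omega U^{\mathrm{vdW}}\,dx+\int_0^1 J_t(\{u>t\})\,dt$ is valid for $0\le u\le 1$, the function $v=(\um\vee t_0)\chi_F+(\um\wedge t_0)\chi_{\Omega\setminus F}$ does lie in $\cX$, and its super-level sets are exactly as you compute. The gap is in the final localisation. The inequality $\Inp(v)\ge\Inp(\um)$ gives
$$
\int_0^{t_0}\bigl[J_t(F\cup E_t)-J_t(E_t)\bigr]\,dt+\int_{t_0}^1\bigl[J_t(F\cap E_t)-J_t(E_t)\bigr]\,dt\ \ge\ 0,
$$
an inequality \emph{anchored at the endpoints} $0$ and $1$, not at $t_0$. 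Knowing that $\Phi(t_0)\ge 0$ for every $t_0$, where $\Phi(t_0)$ denotes the left-hand side, says nothing about the integrands near $t_0$: the difference $\Phi(t_1)-\Phi(t_0)=\int_{t_0}^{t_1}[a(t)-b(t)]\,dt$ of two nonnegative quantities has no sign, so Lebesgue differentiation cannot be applied, and submodularity only gives the one-sided bound $a(t)+b(t)\le J_t(F)-J_t(E_t)$, which points the wrong way. The obstruction is structural: any competitor of the form $\um$ modified on $F$ has nested level sets, so the set of levels at which $\{v>t\}$ differs from $E_t$ is forced to be an interval touching $0$ or $1$; you cannot build a competitor that perturbs $E_t$ only for $t$ in a small neighbourhood of $t_0$, which is what a differentiation argument would require.

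The paper circumvents this entirely by a calibration argument: from the subdifferential characterisation \eqref{necessary cond minimizer nonpolar-0} it extracts a single vector field $z\in X^\infty_{p'}$, $\|z\|_\infty=1$, with $\int_\Omega z\cdot D\um=\|D\um\|(\Omega)$, shows via the coarea formula for the pairing $z\cdot D\um$ that $z\cdot D\chi_{E_t}=|D\chi_{E_t}|$ for a.e.\ $t$, and then uses $-\int_\Omega\div z\,(\chi_F-\chi_{E_t})\le\Per(F;\Omega)-\Per(E_t;\Omega)$ together with the Euler--Lagrange identity to certify minimality of $E_t$ against every $F\in\cM$ directly, extending to all $t$ by monotone approximation. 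If you want to stay on your coarea route without invoking the subdifferential machinery, the standard repair (Chambolle--Caselles--Novaga) reverses the logic: first show that \eqref{Mini pb: finite perimeter} admits minimizers $F_t$ that can be chosen monotone in $t$ (essentially Lemma~\ref{Lem: ordering}), define $w(x)=\sup\{t:x\in F_t\}$, check via lower semicontinuity of the perimeter that $\{w>t\}$ is still a minimizer of $J_t$, and conclude from your representation that $\Inp(w)=\rho_s\int_\Omega U^{\mathrm{vdW}}\,dx+\int_0^1\min_{\cM}J_t\,dt\le\Inp(u)$ for every $u\in\cX$; uniqueness of the minimizer (Theorem~\ref{Thm: existence and uniqueness minimizer - nonpolar}) then forces $w=\um$ and hence $E_t=\{w>t\}$ minimizes $J_t$. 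Either fix requires a genuinely new ingredient beyond the single-competitor inequality you derive.
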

\begin{proof}
Take $z$ as in \eqref{necessary cond minimizer nonpolar-0}.
\eqref{density} and \eqref{subdiff of E1} show that
$$
\|D\um\|(\Omega)=\int_\Omega z\cdot D\um=\int_\Omega \theta(z,D\um) d|D\um|.
$$
By \cite[Corollary 1.6]{MR750538}, it holds that $\|\theta(z,D\um)\|_{L^\infty(\Omega,|D\um|)}\leq \|z\|_\infty =1$.
We thus infer that $\theta(z,D\um)=1$ $|D\um|$-a.e. 
For any $a,b\in [0,1)$ with $a<b$, define
\begin{align*}
v(x)=
\begin{cases}
b \quad &\text{if }\um(x)> b  \\
\um(x)  &\text{if }a	\leq \um(x)\leq b\\
a &\text{if } \um(x)<a.
\end{cases}
\end{align*}
Given any $\phi\in C_0(\Omega)$, by  \cite[Proposition~2.7(i) and Formula~(2.15)]{MR750538}, we have
\begin{align*}
\int_\Omega \phi \, d|D v|=  \int_\Omega \phi \theta(z,Dv)  d|D v|   
= \langle z\cdot Dv,\phi \rangle   
=  \int_a^b \int_{\Omega } \phi (z\cdot  D\chi_{E_t}) \, dt .
\end{align*}
On the other hand, by the coarea formula~\eqref{coarea formula},
\begin{align*}
\int_\Omega \phi \, d|D v|  
=  \int_a^b \int_{\Omega } \phi \, d|D\chi_{E_t}|  \, dt .
\end{align*}
It shows that
$$
\int_a^b \int_{\Omega } \phi (z\cdot  D\chi_{E_t}) \, dt=\int_a^b \int_{\Omega } \phi \, d|D\chi_{E_t}|  \, dt ,\quad \forall \phi\in C^\infty_0(\Omega).
$$
Because $a$ and $b$ are arbitrary,  $(z\cdot  D\chi_{E_t}) =|D\chi_{E_t}| $ in the sense of measure for a.a. $t\in [0,1)$.
This implies that
\begin{equation}
\label{um-super level}
\int_\Omega  z \cdot D\chi_{E_t}  =\|D\chi_{E_t}\|(\Omega) \quad \text{for a.a. }t\in [0,1).
\end{equation}
Denote by $D$ the set of all  $t$ satisfying \eqref{um-super level}.
If $t\in D$, \eqref{um-super level} and \cite[Corollary~1.6, Theorem~1.9]{MR750538} imply that
\begin{align*}
-\int_\Omega \div z (\chi_F -\chi_{E_t})\, dx = & \int_\Omega  z \cdot D\chi_F \, dx - \int_\Omega  z \cdot D\chi_{E_t} \, dx  
=  \int_\Omega  z \cdot D\chi_F \, dx - \Per(E_t;\Omega) \\
\leq & \Per(F;\Omega)-\Per(E_t;\Omega) 
\end{align*}
holds for all $F \in \cM$.
Combining with 
\eqref{necessary cond minimizer nonpolar-0}, we thus deduce that
\begin{align*}
&\gamma\Per(F;\Omega)-\gamma\Per(E_t;\Omega) \\
\geq & -\int_{\Omega} p \um^{p-1} \left( P_h-  \rho_s U^{\mathrm{vdW}} \right)(\chi_F -\chi_{E_t})\, dx - \int_\Omega w (\chi_F -\chi_{E_t})\, dx \\
\geq & -\int_{\Omega} pt ^{p-1 } \left( P_h-  \rho_s U^{\mathrm{vdW}} \right) (\chi_F -\chi_{E_t})\, dx   \\
& +  \int_\Omega  p(t^{p-1}-\um^{p-1})\left( P_h-  \rho_s U^{\mathrm{vdW}} \right)(\chi_F -\chi_{E_t})\, dx \\
\geq   &  -\int_{\Omega}   pt ^{p-1 } \left(  P_h- \rho_s U^{\mathrm{vdW}}   \right)(\chi_F -\chi_{E_t})\, dx
\end{align*}
by observing that
$$
(t^{p-1}-\um^{p-1})(  P_h- \rho_s U^{\mathrm{vdW}}  )(\chi_F -\chi_{E_t}) \geq  0 
$$
and
$$
\int_\Omega w (\chi_F -\chi_{E_t})\, dx = 0.
$$
If $t\notin D$, then take a decreasing sequence $\{t_n\}_{n=1}^\infty \subset D$ such that $t_n\to t^+$. 
It is clear that $\bigcup\limits_n E_{t_n}=E_t$. By the dominated convergence theorem, $\chi_{E_{t_n}}\to \chi_{E_t}$ in $L^1(\Omega)$. Then Proposition~\ref{A1: lsc of BV} shows that
$$
\Per(E_t;\Omega) \leq \liminf\limits_{n\to \infty} \Per(E_{t_n};\Omega).
$$
On the other hand, \eqref{um-super level} and \cite[Corollary~1.6 and Theorem~1.9]{MR750538} imply that
\begin{align*}
\Per(E_{t_n};\Omega)&=\int_\Omega z\cdot D\chi_{E_{t_n}}= -\int_{E_{t_n}} \div z \, dx \\
\to& -\int_{E_t} \div z \, dx= \int_\Omega z\cdot D\chi_{E_t} \leq \Per(E_t;\Omega) ,\quad \text{as } n\to \infty.
\end{align*}
Therefore, \eqref{um-super level}  holds for $t$. 
We thus deduce that the assertion holds for any $t\in [0,1)$.
\end{proof}

\begin{remark}
The existence of a minimizer of \eqref{Mini pb: finite perimeter} can be proved by using the  classical method of Calculus of Variation for every $t\in [0,1)$. See \cite{HawkinsShaoChen2001} for a related problem.
\end{remark}

\medskip
\begin{lem}\label{Lem: ordering}
Let $t'<t$.
If    $F_t$ and $F_{t'}$ are minimizers of \eqref{Mini pb: finite perimeter} with $t$ and $t'$, respectively, then $F_t \subseteq F_{t'}$.
\end{lem}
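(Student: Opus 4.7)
The strategy is the classical cut-and-paste comparison argument exploiting submodularity of the perimeter. Abbreviate $g := P_h - \rho_s U^{\mathrm{vdW}}$ and $J_s(E) := \gamma \Per(E;\Omega) + p s^{p-1} \int_E g\, dx$, so that \eqref{Mini pb: finite perimeter} becomes the minimization of $J_s$ over $\cM$. A crucial preliminary observation is that $U^{\mathrm{vdW}} \leq 0$ pointwise on $\Omega$, since it is the attractive part of the Lennard--Jones potential built from the WCA decomposition \eqref{potential3}; consequently $g \geq P_h > 0$ on $\Omega$, and this sign is what will close the argument at the end.

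First, I would verify admissibility: since $\Omega_m \subseteq F_t, F_{t'} \subseteq \Omega_w$, we obtain the chain $\Omega_m \subseteq F_t \cap F_{t'} \subseteq F_t \cup F_{t'} \subseteq \Omega_w$, and intersections and unions of sets of finite perimeter are again of finite perimeter, so both $F_t \cap F_{t'}$ and $F_t \cup F_{t'}$ lie in $\cM$. Next, I would test the minimality of $F_t$ against the competitor $F_t \cap F_{t'}$ and the minimality of $F_{t'}$ against the competitor $F_t \cup F_{t'}$. Using the elementary identity $F_t \setminus (F_t \cap F_{t'}) = (F_t \cup F_{t'}) \setminus F_{t'} = F_t \setminus F_{t'}$, the two minimality inequalities rearrange to
\[
\gamma \bigl[\Per(F_t;\Omega) - \Per(F_t \cap F_{t'};\Omega)\bigr] \leq -\, p\, t^{p-1} \int_{F_t \setminus F_{t'}} g\, dx,
\]
\[
\gamma \bigl[\Per(F_{t'};\Omega) - \Per(F_t \cup F_{t'};\Omega)\bigr] \leq p\, (t')^{p-1} \int_{F_t \setminus F_{t'}} g\, dx.
\]
Adding these two and invoking the classical submodularity inequality
\[
\Per(F_t \cup F_{t'};\Omega) + \Per(F_t \cap F_{t'};\Omega) \leq \Per(F_t;\Omega) + \Per(F_{t'};\Omega)
\]
to force the left-hand side to be non-negative, I obtain
\[
0 \leq p \bigl[(t')^{p-1} - t^{p-1}\bigr] \int_{F_t \setminus F_{t'}} g\, dx.
\]

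The coefficient $(t')^{p-1} - t^{p-1}$ is strictly negative, since $0 \leq t' < t$ and $p - 1 > 0$ make $s \mapsto s^{p-1}$ strictly increasing on $[0,\infty)$, while the integrand $g$ is strictly positive by the observation above. Hence the only way the displayed inequality can hold is $\cL^3(F_t \setminus F_{t'}) = 0$, which is precisely $F_t \subseteq F_{t'}$ up to a Lebesgue null set. I do not anticipate any serious obstacle; the lone ingredient that needs to be cited is the submodularity of perimeter, which is a standard $BV$ fact valid for any two sets of finite perimeter in an open set, and every other step is algebraic rearrangement.
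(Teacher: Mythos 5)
Your proof is correct and follows essentially the same route as the paper's: testing minimality against $F_t\cap F_{t'}$ and $F_t\cup F_{t'}$, adding, and invoking submodularity of the perimeter to reduce to a sign comparison of $(t')^{p-1}$ and $t^{p-1}$ against $\int_{F_t\setminus F_{t'}}(P_h-\rho_s U^{\mathrm{vdW}})\,dx$. The only difference is cosmetic: you make explicit the positivity of $P_h-\rho_s U^{\mathrm{vdW}}$ (needed to conclude $\cL^3(F_t\setminus F_{t'})=0$), which the paper leaves implicit.
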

\begin{proof}
We clearly have
\begin{align*}
  \gamma \Per(F_t;\Omega)  +   \int_{F_t }     p  t^{p-1} \left( P_h -\rho_s U^{\mathrm{vdW}}   \right)   \, dx   
 \leq   \gamma \Per(F_t\cap U_{t'};\Omega)  +   \int_{F_t  \cap F_{t'}}     p t^{p-1} \left( P_h -\rho_s U^{\mathrm{vdW}}   \right)   \, dx
\end{align*}
and
\begin{align*}
  \gamma \Per(F_{t'};\Omega)  +  \int_{F_{t'} }      p ({t'})^{p-1} \left( P_h -\rho_s U^{\mathrm{vdW}}   \right)   \, dx  
 \leq   \gamma \Per(F_t\cup F_{t'};\Omega)  +\int_{F_t\cup F_{t'}  }     p ({t'})^{p-1} \left( P_h -\rho_s U^{\mathrm{vdW}}   \right)  \, dx.
\end{align*}
Because
$$ 
\Per(F_t\cap F_{t'};\Omega)+\Per(F_t\cup F_{t'};\Omega) \leq \Per(F_t;\Omega) + \Per(F_{t'};\Omega),
$$
we deduce that
\begin{align*}
    & ({t'})^{p-1}\left[ \int_{ F_{t'}}  \left(   P_h -\rho_s U^{\mathrm{vdW}}     \right) \, dx  -\int_{F_t\cup F_{t'}}    \left(   P_h -\rho_s U^{\mathrm{vdW}}   \right) \, dx \right]  \\
\leq  &   t^{p-1}  \left[ \int_{F_t\cap F_{t'}  } \left(   P_h -\rho_s U^{\mathrm{vdW}}     \right) \, dx -  \int_{F_t}    \left(   P_h -\rho_s U^{\mathrm{vdW}}    \right) \, dx \right] ,
\end{align*}
i.e.
\begin{align*}
    ({t'})^{p-1} \int_{F_t \setminus F_{t'}}  \left(   P_h -\rho_s U^{\mathrm{vdW}}     \right)\, dx   
\geq   t^{p-1}  \int_{F_t \setminus F_{t'}}    \left(   P_h -\rho_s U^{\mathrm{vdW}}     \right)\, dx . 
\end{align*}
But $t'<t$. This implies that $ F_t \subseteq F_{t'}$.
\end{proof}

\begin{prop}
For all but countably many $t\in (0,1]$, the minimizer of \eqref{Mini pb: finite perimeter}  is unique, i.e. $E_t$.
\end{prop}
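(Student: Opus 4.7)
The plan is to use the ordering Lemma~\ref{Lem: ordering} to sandwich any competing minimizer of \eqref{Mini pb: finite perimeter} at level $t$ between super-level sets $E_{t'}$, and then to exploit monotonicity of the distribution function of $\um$ to isolate the countable set where uniqueness could possibly fail.

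First I would fix $t\in(0,1]$ and let $F$ be an arbitrary minimizer of \eqref{Mini pb: finite perimeter} at this level. Proposition~\ref{Prop: nonpolar to finite perimeter} ensures that every $E_{t'}$ with $t'\in[0,1)$ is also a minimizer at its own level, so Lemma~\ref{Lem: ordering} can be invoked in both directions: taking the higher level to be $t$ and the lower to be $t'<t$ yields $F\subseteq E_{t'}$, while taking the higher level to be $t'>t$ and the lower to be $t$ yields $E_{t'}\subseteq F$. Combining these containments and passing to the union in $t'>t$ and the intersection in $t'<t$, together with the pointwise identities $\bigcup_{t'>t}\{\um>t'\}=\{\um>t\}=E_t$ and $\bigcap_{t'<t}\{\um>t'\}=\{\um\ge t\}$, one obtains
\begin{equation*}
E_t\subseteq F\subseteq \{\um\ge t\} \quad(\text{up to }\cL^3\text{-null sets}),
\end{equation*}
so that the symmetric difference $F\triangle E_t$ is contained in the level set $\{\um=t\}$.

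The remaining step is the standard observation that the bounded, nonincreasing function $\phi(s):=\cL^3(\{\um>s\})$ on $[0,1]$ has at most countably many discontinuities; denote this discontinuity set by $D\subset(0,1]$. For every $s\in(0,1]\setminus D$ one has $\cL^3(\{\um=s\})=\phi(s^-)-\phi(s)=0$, hence $\cL^3(F\triangle E_t)=0$ for every $t\in(0,1]\setminus D$, i.e. $F=E_t$ as sets of finite perimeter at all such $t$.

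I do not foresee any significant obstacle, since Lemma~\ref{Lem: ordering} does essentially all of the work and the exceptional countable set materializes automatically as the jump set of the monotone distribution function of $\um$. The only points requiring care are the correct bidirectional application of the ordering lemma and the measure-theoretic passage to unions and intersections of the $E_{t'}$ via their pointwise definition.
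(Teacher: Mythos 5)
Your proposal is correct and follows essentially the same route as the paper: both arguments sandwich an arbitrary minimizer $F$ between $\bigcup_{t'>t}E_{t'}=E_t$ and $\bigcap_{t'<t}E_{t'}=\{\um\ge t\}$ via the ordering Lemma~\ref{Lem: ordering} applied in both directions, and then discard the at most countably many levels where $\cL^3(\{\um=t\})>0$. Your explicit identification of this exceptional set with the jump set of the monotone distribution function $s\mapsto\cL^3(\{\um>s\})$ is just a slightly more detailed justification of the same step the paper states directly.
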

\begin{proof}
Fix $t \in (0,1)$ and assume that $F$ is a minimizer   of \eqref{Mini pb: finite perimeter}.
Take an  arbitrary increasing sequence $\{s_n\}_{n=1}^\infty \subseteq (0,1)$ and an arbitrary decreasing sequence $ \{t_n\}_{n=1}^\infty \subseteq (0,1)$ such that $\lim\limits_{n\to\infty} s_n =t = \lim\limits_{n\to\infty} t_n$.

It follows from Proposition~\ref{Prop: nonpolar to finite perimeter} and Lemma~\ref{Lem: ordering} that
$$
\bigcup\limits_n E_{t_n} \subseteq F \subseteq \bigcap\limits_n E_{s_n} .
$$
Note that 
$$
\bigcap\limits_n E_{s_n} =  E_t\cup \{u=t\}\quad \text{and}\quad \bigcup\limits_n E_{t_n}=E_t.
$$
However, there are only countably many  $t$ such that $\cL^3( \{u=t\})>0$. This implies that
$$
F = E_t \quad \text{for a.a. } t\in [0,1).
$$
This completes the proof.
\end{proof}


\begin{prop}\label{Prop: regularit of mini surface}
For any $t\in [0,1)$, the singular set of $E_t$ is contained in $ \Sigma_0 \cup \Sigma_1 $ and $\partial   E_t \setminus (\Sigma_0 \cup \Sigma_1) $ is of class $C^\infty$.
\end{prop}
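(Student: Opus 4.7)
The strategy is to exhibit $E_t$ as a local almost-minimizer of the perimeter in an open neighborhood of any point of $\partial E_t \setminus (\Sigma_0 \cup \Sigma_1)$ and then invoke the classical De~Giorgi--Tamanini regularity theory for sets of finite perimeter in $\R^3$.

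First, I would fix a point $x_0 \in \partial E_t \setminus (\Sigma_0 \cup \Sigma_1)$. The inclusion $\Omega_m \subseteq E \subseteq \Omega_w$ only constrains a competitor $E$ on the two closed sets $\overline{\Omega}_m$ and $\overline{\Omega}_s$, whose topological boundaries (relative to $\Omega$) are $\Sigma_1$ and $\Sigma_0$, respectively. Hence I may choose a small open ball $B = B(x_0, r) \Subset \Omega \setminus (\overline{\Omega}_m \cup \overline{\Omega}_s)$. For any set $F$ of finite perimeter with $F \triangle E_t \Subset B$, we have $F \in \cM$, and after cancelling all contributions to both the perimeter and the bulk integral that live outside $B$, the minimality of $E_t$ in \eqref{Mini pb: finite perimeter} yields
\begin{equation*}
\gamma\,\Per(E_t; B) + \int_B f\,\chi_{E_t}\,dx \;\le\; \gamma\,\Per(F;B) + \int_B f\,\chi_F\,dx,
\end{equation*}
where $f(x) := p\, t^{p-1}\bigl(P_h - \rho_s U^{\mathrm{vdW}}(x)\bigr)$. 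Since all atom centers $x_i$ lie in $\Omega_m$ and $B \cap \Omega_m = \emptyset$, the Lennard--Jones based function $U^{\mathrm{vdW}}$ and therefore $f$ is $C^\infty(\overline{B})$.

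Second, I would deduce from the above inequality, after estimating the integral term by $\|f\|_{L^\infty(B)}\,\cL^3(F\triangle E_t)$, that $E_t$ is a Tamanini $(\omega, r_0)$-minimizer of the perimeter in $B$: for every ball $B' = B(y,\rho) \subset B$ with $\rho < r_0$ and every $F$ with $F \triangle E_t \Subset B'$,
\begin{equation*}
\Per(E_t; B') \;\le\; \Per(F; B') + \tfrac{\|f\|_\infty}{\gamma}\,\cL^3(F\triangle E_t) \;\le\; \Per(F;B') + C\rho^{3}.
\end{equation*}
By Tamanini's theorem, the reduced boundary $\partial^*E_t\cap B$ is a $C^{1,1/2}$-hypersurface, and the singular set $(\partial E_t \setminus \partial^* E_t) \cap B$ has Hausdorff dimension at most $3-8 = -5$, hence is empty. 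Thus $\partial E_t \cap B$ is a $C^{1,1/2}$-manifold.

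Finally, I would bootstrap. On the $C^{1,1/2}$-graph locally representing $\partial E_t \cap B$, the Euler--Lagrange condition takes the form of the prescribed mean curvature equation $\gamma H = -f$ with $f \in C^\infty(B)$, a quasilinear elliptic PDE with smooth data; Schauder estimates promote the regularity from $C^{1,1/2}$ to $C^\infty$. A covering argument then yields the claim on all of $\partial E_t \setminus (\Sigma_0 \cup \Sigma_1)$. The part I expect to be most delicate is the precise verification of the $(\omega,r_0)$-minimality at the exact constants and ball scale needed to invoke Tamanini's theorem; once that bookkeeping is done, the interior smoothness follows from well-developed tools. A minor additional check is needed to ensure that $\partial E_t$ does not meet $\partial\Omega$ outside of $\Sigma_0 \cup \Sigma_1$, but this is immediate from $E_t \subseteq \Omega_w$ and $\partial\Omega \subseteq \partial \Omega_s$.
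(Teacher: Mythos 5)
Your proposal is correct and follows essentially the same route as the paper's proof: local perturbations in a small ball away from $\overline{\Omega}_m\cup\overline{\Omega}_s$ show that $E_t$ is almost minimal in the sense of Tamanini (the paper bounds the bulk term by $Cr^{2+\delta}$ via H\"older where you use $C\rho^3$, an immaterial difference), Tamanini's regularity theorem gives $C^1$ regularity and an empty singular set away from $\Sigma_0\cup\Sigma_1$, and the prescribed mean curvature equation plus elliptic bootstrapping upgrades this to $C^\infty$. No substantive gap.
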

\begin{proof}
For any $x\in \partial   E_t \cap \Omega_t $, for sufficiently small $r>0$, the ball $B(x,r)$ is contained in $\Omega_t$. For any local perturbation of $E_t$ in $B(x,r)$, i.e. a set $F$ of finite perimeter such that $F \Delta E_t =(F\setminus E_t) \cup (E_t\setminus F) \subset\!\subset B(x,r)$, we have
\begin{align*}
\Per(E_t; B(x,r)) \leq & \Per(F;B(x,r)) + C \int_{B(x,r)} p t^{p-1} \left(P_h  - \rho_s U^{\mathrm{vdW}} \right)\, dx \\
\leq & \Per(F;B(x,r)) + C r^{2+\delta} 
\end{align*}
by H\"older inequality for any $\delta\in (0,1)$. Note that the constant $C$ in the above inequality is independent of the position of $x$. 
Hence $E_t\cap \Omega_t$ is almost minimal in $\Omega_t$ in the sense of \cite[Definition~1.5]{Tam94}. Therefore, \cite[Theorem~1.9]{Tam94} implies that
the singular set of $E_t $ is contained in $\Sigma_0\cup \Sigma_1$ and $\partial   E_t \setminus (\Sigma_0 \cup \Sigma_1) $ is a $C^1$-hypersurface. Then the assertion follows from the standard regularity theorem of non-parametric minimizing surfaces, see \cite{MR775682} for example.
For the reader's convenience, we will  state a proof here.
For every $x_0\in \partial E_t\setminus (\Sigma_0 \cup \Sigma_1)$,  
denote by $\Hz$ the tangent plane of $\partial E_t$ at $x_0$. 
Near $x_0$, we can rewrite the coordinates in the form $x=(y,z)$, where $y$ is the coordinates in $H$ and $z$ is the coordinate in the normal direction of $H$. We use the convention $z=y=0$ at $x_0$.
For sufficiently small $r>0$, let $U_r=B(x,r)\cap \Hz$. 
Build a  cylinder $C_r=U_r \times (-r,r)\subset\subset \Omega_t$ in $(y,z)-$coordinates centered at $x_0$.
Inside $C_r$, we can express $\partial E_t$  as the graph of a $C^1-$function $v$:
$$
z=v(y),\quad y\in U_r.
$$
See Figure~\ref{tangent}.
\begin{figure}
\centering
\includegraphics[width=0.5\columnwidth]{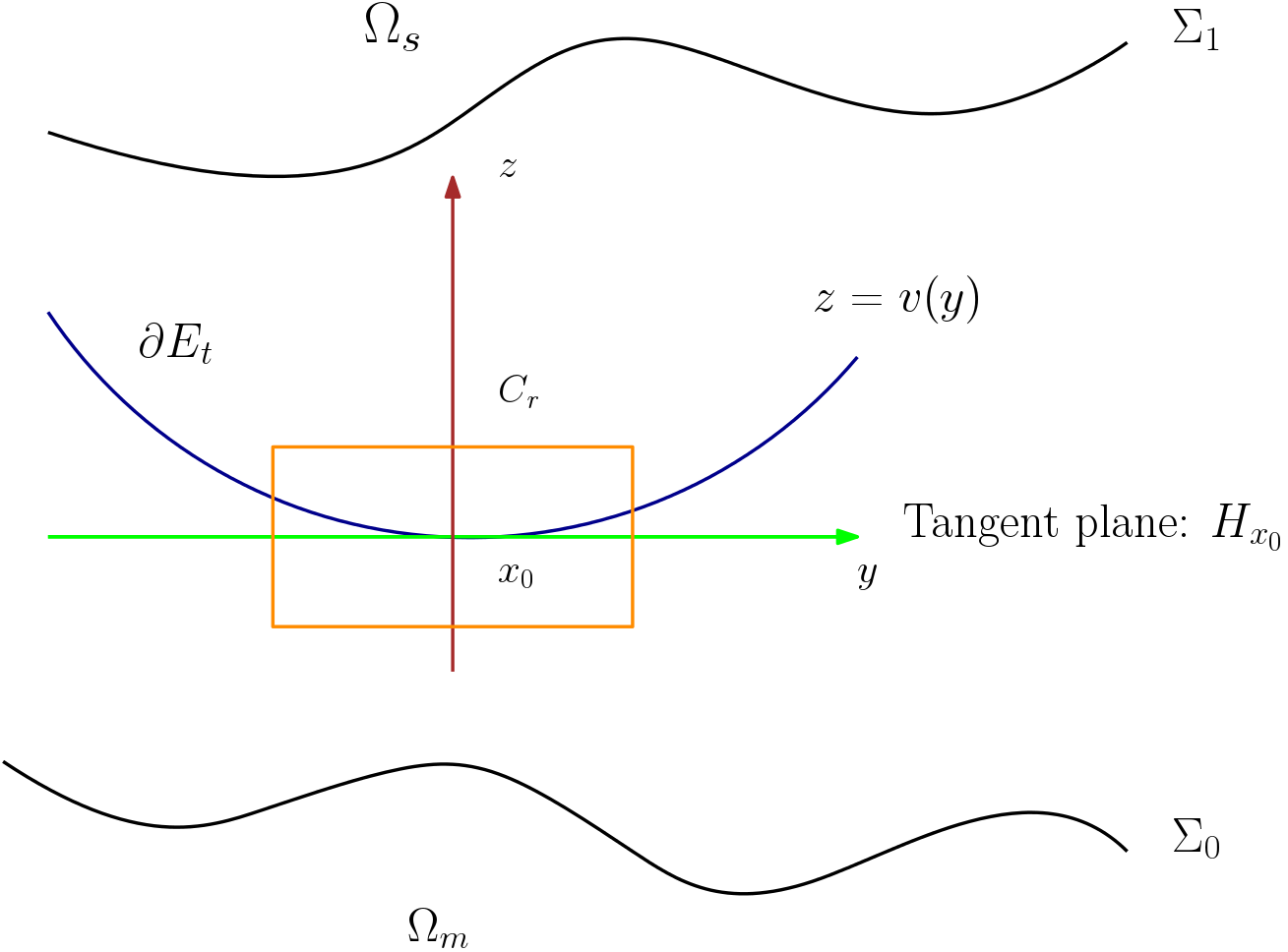}
   \caption{A coordinate system near $x_0\in \partial E_t\setminus (\Sigma_0 \cup \Sigma_1)$}
\label{tangent}
\end{figure}
Then 
\begin{align*}
& \gamma \Per(E_t;C_r)  +  \int_{C_r \cap E_t} p t^{p-1} \left( P_h -  \rho_s  U^{\mathrm{vdW}} \right)\, dx  \\
=& \gamma \int_{U_r } \sqrt{1+ |\nabla_y v(y)|^2}\, dy  + \int_{ U_r } \int_0^{v(y)} \left( P_h -  \rho_s  U^{\mathrm{vdW}}(y,z) \right)\, dzdy.
\end{align*}
By the fundamental theorem of Calculus, $v$ solves
\begin{equation*}
\left\{\begin{aligned}
  A(v)v &= f(y,v(y) )  &&\text{in}&& U_r;\\
v &=g &&\text{on}&&\partial U_r
\end{aligned}\right.
\end{equation*}
for some $g\in C^1(\partial U_r)$. Here
$$
A(v)w= \frac{\Delta_y w (y)}{\sqrt{1+ |\nabla_y v (y)|^2 }}    -   \frac{(\nabla_y v)  \nabla^2_y w(\nabla_y v)^T}{(\sqrt{1+ |\nabla_y v (y)|^2 })^3}  ,\quad  f(y,z) =\left(P_h -  \rho_s  U^{\mathrm{vdW}}(y,z)  \right)/\gamma.
$$
By choosing $r>0$ sufficiently small, one can infer from \cite[Theorems~16.10]{MR737190} that $v\in C^2(\overline{U_r})$.
The remaining regularity follows from  a bootstrapping argument, cf. \cite[Theorems~6.13 and 6.17]{MR737190}.
\end{proof}

\medskip
\begin{remark}
If we assume, in addition, that  $\Sigma_i \in C^{1,1}$ for $i=0,1$, then 
following the argument in \cite[Section~1.14(iv)]{Tam94}, one can show that the singular set of $E_t$ is empty and $\partial E_t \in C^{1,1}$.
Since this fact will not be used   below, to keep the article in a reasonable length, we will not provide a rigorous proof here.
\end{remark}

\medskip

\begin{prop}
\label{Prop: regularity of minimizer}
The jump set, $J_{\um}$, of  $\um$ is contained in $\Sigma_0 \cup \Sigma_1$.
\end{prop}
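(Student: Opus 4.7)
The plan is to argue by contradiction through a Hadamard shape-derivative (inner-variation) argument. Suppose $x_0 \in J_{\um}$ with $x_0 \notin \Sigma_0 \cup \Sigma_1$. Since every element of $\cX$ is pinned to $1$ on $\Omega_m$ and $0$ on $\Omega_s$, the point $x_0$ must lie in the mixing region $\Omega_t$. By $\cH^2$-rectifiability of the BV jump set, it suffices to treat a \emph{regular} jump point: $J_{\um}$ admits an approximate tangent plane $\nu^\perp$ at $x_0$ (with $\nu := \nu_{\um}(x_0)$), the one-sided approximate limits satisfy $u^-(x_0) < u^+(x_0)$, and the absolutely continuous part of $|D\um|$ concentrates as $|D\um|^{\mathrm{ac}}(B_r(x_0)) = o(r^2)$ while the jump part on $J_{\um}\cap B_r(x_0)$ is of size $\sim r^2$ as $r\to 0$.

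Pick a cut-off $\phi \in C_c^\infty(\Omega_t;[0,\infty))$ supported in a small ball $B_r(x_0) \subset\subset \Omega_t$ with $\phi(x_0) > 0$, and consider the Hadamard competitor
\[
\tilde{\um}_\epsilon(x) := \um\bigl(x - \epsilon\phi(x)\nu\bigr), \qquad 0 < \epsilon \ll 1,
\]
obtained by locally pushing $\um$ in the direction $+\nu$. The map $T_\epsilon(x) := x - \epsilon\phi(x)\nu$ is a diffeomorphism of $\Omega$ equal to the identity outside $B_r(x_0)$ and preserves $\Omega_m, \Omega_s$, so $\tilde{\um}_\epsilon \in \cX$. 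A change-of-variables expansion then yields:
(i) $\gamma\|D\tilde{\um}_\epsilon\|(\Omega) - \gamma\|D\um\|(\Omega) = o(\epsilon)$, because at a regular jump point the tangent plane to $J_{\um}$ is flat and a normal-shift of a locally flat piece changes its $\cH^2$-area only at higher order;
(ii) splitting the nonpolar volume integral along $J_{\um}$ and tracking the $\epsilon\phi$-tubular neighborhood on which $\tilde{\um}_\epsilon$ disagrees with $\um$,
\[
\int_\Omega (\tilde{\um}_\epsilon^p - \um^p)(P_h - \rho_s U^{\mathrm{vdW}})\,dx = \epsilon \int_{J_{\um}} \phi\,(P_h - \rho_s U^{\mathrm{vdW}})\,\bigl((u^-)^p - (u^+)^p\bigr)\,d\cH^2 + o(\epsilon),
\]
the ``smooth'' contribution from $\nabla\um$ being negligible by the concentration property above.

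Since the WCA decomposition \eqref{potential3} gives $U^{\mathrm{vdW}}(x) \le 0$ and $P_h > 0$, the coefficient $P_h - \rho_s U^{\mathrm{vdW}}$ is uniformly strictly positive. Combined with strict monotonicity of $u \mapsto u^p$ for $p > 1$, which gives $(u^-)^p - (u^+)^p < 0$, and the choice $\phi \ge 0$ with $\phi(x_0) > 0$, the first-order variation above is strictly negative. Hence $\Inp(\tilde{\um}_\epsilon) < \Inp(\um)$ for all sufficiently small $\epsilon > 0$, contradicting the minimality of $\um$ from Theorem~\ref{Thm: existence and uniqueness minimizer - nonpolar}. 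The main obstacle is the rigorous justification of the expansions (i)--(ii) at a regular jump point, since $\tilde{\um}_\epsilon$ differs from $\um$ precisely on a thin tubular neighborhood of $J_{\um}$ where $D\um$ carries a singular measure and the classical chain rule does not directly apply; the natural route is to regularize $\um$ by mollification (or by the $W^{1,q_k}$-competitors of Section~\ref{Section 3}), perform the shift computation in that smooth setting, and pass to the limit using the $\cH^2$-rectifiability of $J_{\um}$ together with lower semicontinuity of $\|D\,\cdot\,\|$.
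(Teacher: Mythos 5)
Your step (i) is the fatal gap. Possessing an approximate tangent plane at a jump point does not make $J_{\um}$ ``locally flat'': the first variation of the (jump\nobreakdash-weighted) area under the normal perturbation $x\mapsto x-\epsilon\phi(x)\nu$ is $\epsilon\int_{J_{\um}}\phi\,H\,|u^+-u^-|\,d\cH^2+o(\epsilon)$ when $J_{\um}$ is smooth (and only a distributional analogue of this when it is merely rectifiable), where $H$ is the mean curvature. This is $O(\epsilon)$, not $o(\epsilon)$, unless $H\equiv 0$ --- and $H\equiv 0$ is exactly what you cannot assume here: by Proposition~\ref{Prop: nonpolar to finite perimeter} the level-set boundaries satisfy a \emph{prescribed} mean curvature equation, $\gamma\,{\div}_y\bigl(\nabla_y v/\sqrt{1+|\nabla_y v|^2}\,\bigr)=pt^{p-1}\bigl(P_h-\rho_s U^{\mathrm{vdW}}\bigr)$, whose right-hand side is strictly positive for $t>0$, so the relevant surfaces are genuinely curved. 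With the correct expansion your first-order inequality reads
$\gamma\int_{J_{\um}}\phi\,H\,|u^+-u^-|\,d\cH^2+\int_{J_{\um}}\phi\,(P_h-\rho_s U^{\mathrm{vdW}})\bigl((u^-)^p-(u^+)^p\bigr)\,d\cH^2\geq 0$,
which is an Euler--Lagrange condition on the jump set in which the two terms can cancel, not a contradiction. (A radial toy problem makes this concrete: the candidate jump sphere of radius $R$ has $2\gamma/R$ exactly balancing the potential term, and your competitor does not lower the energy at first order.) Your proposed remedy --- mollify, compute, pass to the limit --- does not repair this, because the obstruction is not the roughness of $J_{\um}$ but the nonvanishing of its mean curvature.

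What is actually needed, and what the paper uses, is that a \emph{single} interface cannot satisfy the prescribed-curvature condition for \emph{all} levels $t$ in the jump interval $(u^-,u^+)$ simultaneously, since the prescribed curvature $pt^{p-1}(P_h-\rho_s U^{\mathrm{vdW}})/\gamma$ varies strictly with $t$. Concretely, the paper reduces to the identity $J_{\um}=\bigcup_{t_1<t_2}\partial E_{t_1}\cap\partial E_{t_2}$ from \eqref{jump set}, shows each super-level set $E_t$ minimizes $\gamma\Per(E;\Omega)+\int_E pt^{p-1}(P_h-\rho_s U^{\mathrm{vdW}})\,dx$ (Proposition~\ref{Prop: nonpolar to finite perimeter}), obtains smoothness of $\partial E_t\setminus(\Sigma_0\cup\Sigma_1)$ from the theory of almost minimal sets (Proposition~\ref{Prop: regularit of mini surface}), writes two ordered level-set boundaries touching at $x_0$ as graphs $v_1\geq v_2$ over their common tangent plane, and applies the strong comparison principle to the two quasilinear equations with strictly ordered right-hand sides to conclude $v_1>v_2$, contradicting $v_1(x_0)=v_2(x_0)$. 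An inner-variation computation alone, even done correctly, cannot substitute for this comparison step.
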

\begin{proof}
The proof follows the idea in \cite[Theorem~3.4]{MR2368971}. By \eqref{jump set}, it suffices to show that for any $t_1<t_2\in [0,1)$ and $t_1,t_2\in \mathbb{Q}$, it holds
$$
(\partial E_{t_1} \cap \partial E_{t_2})\setminus (\Sigma_0 \cup \Sigma_1)=\emptyset. 
$$
Assume that $x_0\in (\partial E_{t_1} \cap \partial E_{t_2})\setminus (\Sigma_0 \cup \Sigma_1)$.
By Proposition~\ref{Prop: regularit of mini surface}, both $\partial E_{t_1}$ and $\partial E_{t_2}$  are regular in a neighbourhood of $x_0$.
From the fact $E_{t_2} \subseteq E_{t_1}$, we deduce that the tangent space of $E_{t_2}$ and $ E_{t_1}$ at $x_0$ agree.
Denote  the tangent space by $\Hz$. We define the coordinates in the form $x=(y,z)$ and the   cylinder $C_r=(-r,r)\times U_r$ as in the previous proof.  
Then we can express $E_{t_i}$ with $i=1,2$  as  graphs over $U_r$ as
$$
z=v_i(y) \quad i=1,2 
$$
with $v_i\in C^\infty(U_r)$.
$E_{t_2} \subseteq E_{t_1}$ implies that $v_1\geq v_2$ in $U_r$.
Similar to the previous proof, we have
$$
 \gamma {\div}_y \left( \frac{\nabla_y v_i(y)}{\sqrt{1+ |\nabla_y v_i(y)|^2 }}  \right) = p t_i^{p-1} \left( P_h   -  \rho_s U^{\mathrm{vdW}}(y,v_i(y)) \right) .
$$
Since $t_2>t_1$, $v_i(0)=0$, $\nabla_y v_i(0)=0$,  by choosing $r>0$ small enough, we have
\begin{align*}
  p t_2^{p-1} \left( P_h   -  \rho_s U^{\mathrm{vdW}}(y,v_2(y)) \right) \Big(\sqrt{1+ |\nabla_y v_2(y)|^2} \Big)^3 
 >   p t_1^{p-1} \left( P_h   -  \rho_s U^{\mathrm{vdW}}(y,v_1(y)) \right) \Big(\sqrt{1+ |\nabla_y v_1(y)|^2} \Big)^3
\end{align*}
for all $y\in U_r$. This implies that
\begin{align*}
  \left(1+ |\nabla_y v_2 |^2 \right)\Delta_y v_2  - \nabla_y v_2  \nabla^2_y v_2  \nabla_y v_2  
>   \left(1+ |\nabla_y v_1 |^2 \right)\Delta_y v_1  - \nabla_y v_1  \nabla^2_y v_1  \nabla_y v_1 
\end{align*}
in $U_r$.
In view of the boundary condition $v_1\geq v_2$ on $\partial U_r$, we infer from \cite[Theorem~10.1]{MR737190} that $v_2<v_1$ in $U_r$, which contradicts $v_1(x_0)=v_2(x_0)$. Therefore, $(\partial E_{t_1} \cap \partial E_{t_2} )\setminus (\Sigma_0\cup \Sigma_1)=\emptyset$.
\end{proof}

\begin{remark}\label{Rmk: continuity of u}
In particular, Proposition \ref{Prop: regularity of minimizer} implies that   $u\in C(\Omega_t)$. 
\end{remark}



\subsection{Necessary Conditions for the Formation of a Sharp Interface}\label{Section 5.3}

In this section, we first consider the case that $\Omega_t$ is connected. 
In order to state the main theorem of this section, we define the orientations of $\Sigma_i$ in such a way that
\begin{itemize}
\item the outer normal of $\Sigma_1$ points into $\Omega_t$, and
\item the outer normal of $\Sigma_0$ points into $\Omega_s$.
\end{itemize}
With these conventions, a sphere of radius $R>0$ has constant mean curvature $-1/R$.

\medskip
\begin{theorem}\label{Thm: sharp interface}
Suppose that $\Omega_t$ is connected and $\Sigma_i$,  for $i=0,1$, are  $C^2-$closed  surfaces. Let $\kappa$ be the mean curvature of $\Sigma_1$. 
If $\kappa(\p)>0$ for some $\p\in \Sigma_1$, then there is no sharp solute-solvent interface, that is, the minimizer $\um$ of \eqref{non penalized total energy functional final form} is not the characteristic function of a set $E$ of finite perimeter with $\Omega_m \subseteq E \subseteq \Omega_w$. 
\end{theorem}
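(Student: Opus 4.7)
The plan is to argue by contradiction. Assume $\um=\chi_E$ for some set $E$ of finite perimeter with $\Omega_m\subseteq E\subseteq \Omega_w$. The first task is to reduce $E$ to one of the two extreme sets $\Omega_m$ or $\Omega_w$. For $\um=\chi_E$ the jump set $J_{\um}$ coincides modulo $\cH^2$-null with the reduced boundary $\partial^* E$, so Proposition~\ref{Prop: regularity of minimizer} forces $\partial^* E\subseteq \Sigma_0\cup\Sigma_1$; in particular $|D\chi_E|(\Omega_t)=0$. Because $\Omega_t$ is open and \emph{connected}, a standard consequence of the De Giorgi structure theorem yields that $\chi_E$ is a.e.\ constant on $\Omega_t$, hence $E\cap \Omega_t$ is essentially empty or essentially all of $\Omega_t$. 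Combined with $\Omega_m\subseteq E\subseteq \Omega_w$, this forces $E=\Omega_m$ or $E=\Omega_w$ modulo Lebesgue null sets.

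The main case is $E=\Omega_m$. By Proposition~\ref{Prop: nonpolar to finite perimeter}, $\Omega_m$ is then a minimizer of the reduced sharp-interface problem for \emph{every} $t\in(0,1)$. I would test this minimality against outward one-sided normal perturbations $E^s=(\mathrm{id}+sX)(\Omega_m)$, where $X$ is smooth, compactly supported in a small neighborhood of $\p$, with $X\cdot\nu\geq 0$ on $\Sigma_1$ and $X(\p)\cdot\nu(\p)>0$ (here $\nu$ is the outward normal of $\Omega_m$, pointing into $\Omega_t$). For small $s>0$, $E^s\in\cM$. The classical first-variation formula for perimeter—under the paper's sign convention in which a sphere of radius $R$ has mean curvature $-1/R$—yields
\begin{equation*}
0\leq \frac{d}{ds}\Big|_{s=0^+}\!\!\Bigl[\gamma\,\Per(E^s;\Omega)+\int_{E^s}pt^{p-1}(P_h-\rho_s U^{\mathrm{vdW}})\,dx\Bigr]=\int_{\Sigma_1}\bigl[-\gamma\kappa+pt^{p-1}(P_h-\rho_s U^{\mathrm{vdW}})\bigr](X\cdot\nu)\,d\cH^2.
\end{equation*}
By the arbitrariness of $X\cdot\nu\geq 0$ localized near $\p$ and the $C^2$-regularity of $\Sigma_1$, this localizes to the pointwise condition $-\gamma\kappa(\p)+pt^{p-1}(P_h-\rho_s U^{\mathrm{vdW}}(\p))\geq 0$ for every $t\in(0,1)$. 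Letting $t\to 0^+$ and using $p>1$ so that $pt^{p-1}\to 0$, I obtain $\kappa(\p)\leq 0$, contradicting $\kappa(\p)>0$.

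For the remaining case $E=\Omega_w$, the surface $\Sigma_1$ lies strictly inside $E$ and therefore contributes nothing to $\partial^* E$, so the above Euler--Lagrange argument on $\Sigma_1$ is unavailable. Here I would abandon the sharp-interface reduced problem and work directly with the strictly convex $BV$-functional $\Inp$: take $u_\varepsilon=\chi_{\Omega_w}-\varepsilon\,\eta\,\chi_{\Omega_t}\in \cY$ for a smooth cutoff $\eta\geq 0$ supported near $\p$ and vanishing in a neighborhood of $\Sigma_0$. This admissible perturbation opens a jump of size $\varepsilon\eta|_{\Sigma_1}$ across $\Sigma_1$, so the first-order expansion of $\Inp(u_\varepsilon)-\Inp(\chi_{\Omega_w})$ couples a boundary term $\gamma\varepsilon\int_{\Sigma_1}\eta\,d\cH^2$ with the bulk variation from $u^p$ and $(1-u^p)U^{\mathrm{vdW}}$. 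Concentrating $\eta$ on a shrinking patch of $\Sigma_1$ around $\p$ and comparing with the local area of $\Sigma_1$ as a normal graph over its tangent plane at $\p$—where $\kappa(\p)>0$ forces $\Sigma_1$ to bulge into $\Omega_m$—should again violate the minimality.

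The main obstacle is precisely this second case. The sharp-interface machinery of Proposition~\ref{Prop: nonpolar to finite perimeter} is tailored for perturbations \emph{across} $\partial^* E$, but the geometric datum $\kappa(\p)$ lives inside $E$, so one must use a genuine $BV$-competitor that creates a brand new jump set. The delicate point is the scaling analysis: one must choose $\eta$ so that its $\cH^2$-trace on $\Sigma_1$ really competes, at leading order in $\varepsilon$, with the bulk contribution coming from the strictly convex nonlinearity $u^p$, without being overwhelmed by lower-order error terms from the interior variation on $\Omega_t$.
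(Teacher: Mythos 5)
Your reduction to $E=\Omega_m$ or $E=\Omega_w$ and your treatment of the case $E=\Omega_m$ match the paper's proof: the paper also deduces $\partial^*E\subseteq J_{\chi_E}\subseteq \Sigma_0\cup\Sigma_1$ from Proposition~\ref{Prop: regularity of minimizer} and De Giorgi's theorem, uses connectedness of $\Omega_t$ (via the isoperimetric inequality and a chaining argument, where you invoke local constancy of $BV$ functions with vanishing total variation -- both are fine), and then tests minimality of $\Omega_m$ in \eqref{Mini pb: finite perimeter} against one-sided outward normal perturbations, sending $t\to 0^+$ so that $pt^{p-1}\to 0$ and concluding $\kappa\leq 0$ on $\Sigma_1$. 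Up to this point your argument is correct and essentially identical to the paper's.

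The genuine gap is the case $E=\Omega_w$, which you correctly identify as your main obstacle but do not resolve, and the route you sketch is unlikely to work. For the competitor $u_\varepsilon=\chi_{\Omega_w}-\varepsilon\,\eta\,\chi_{\Omega_t}$, the first-order change in $\Inp$ is
\begin{equation*}
\varepsilon\Bigl[\gamma\int_{\Sigma_1}\eta\,d\cH^2+\gamma\int_{\Omega_t}|\nabla\eta|\,dx-p\int_{\Omega_t}\eta\,(P_h-\rho_s U^{\mathrm{vdW}})\,dx\Bigr],
\end{equation*}
in which the mean curvature of $\Sigma_1$ does not appear at all; moreover, concentrating $\eta$ on a shrinking patch around $\p$ makes the (favorable) bulk term, which scales with the volume of $\supp\eta$, vanish faster than the (unfavorable) new jump term on $\Sigma_1$, which scales with area. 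So this competitor does not produce a contradiction, and in fact no hypothesis on $\kappa$ should be needed here. The paper's resolution is much simpler: when $E=\Omega_w$ the free boundary of the reduced problem is $\Sigma_0$, not $\Sigma_1$, and the admissible one-sided perturbations are the \emph{inward} normal deformations of $\Sigma_0$ (which preserve $\Omega_m\subseteq F\subseteq\Omega_w$). The resulting first-variation inequality, taken in the limit $t\to 1^-$ (the opposite endpoint from case two), yields $\widehat{\kappa}\geq p(P_h-\rho_s U^{\mathrm{vdW}})/(2\gamma)>0$ everywhere on $\Sigma_0$, where $\widehat{\kappa}$ is the mean curvature of $\Sigma_0$; since no closed surface in $\R^3$ has everywhere positive mean curvature (in the paper's sign convention), this rules out $E=\Omega_w$ unconditionally.
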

\begin{proof}
Assume, to the contrary, that there exists a   set $E$ of finite perimeter such that $\Omega_m \subseteq E \subseteq \Omega_w$ and $\chi_E$ minimizes \eqref{non penalized total energy functional final form}.

(1) By the De Giorgi Theorem, cf. \cite[Theorem~3.59 and Example 3.68]{MR1857292}, we have
$$
\partial^* E \subseteq J_{\chi_E}\subseteq \Sigma_0 \cup \Sigma_1.
$$
For every $x\in \Omega_t\cap E$, \eqref{red bdry} implies that 
$ 
\Per(E;B(x,r))   =0
$ 
for all  $r>0$  so small that $B(x,r)\subset \Omega_t$. Thus the isoperimeteric inequality, cf. \cite[Theorem~5.6.2]{MR1158660}, implies that
$$
\min \{ \cL^3(B(x,r)\cap E), \cL^3(B(x,r)\setminus E)  \}^{2/3}\leq C \Per(E;B(x,r))=0.
$$
If $\cL^3(E \cap \Omega_t)>0$, assume that there exist two distinct points $x_1,x_2\in \Omega_t$ such that $\cL^3(B(x_1,r)\cap E)=0$ and $\cL^3(B(x_2,r)\setminus E)=0$.
Since $\Omega_t$ is connected, we can find a continuous path $\gamma:[0,1]\to \Omega_t$ such that
$$
\gamma(0)=x_1 ,\quad \gamma(1)=x_2.
$$
Further assume that $r>0$ is so small that $B(x,r)\subset \Omega_t$ for all $x\in \gamma([0,1]).$ Then for any $x\in \gamma([0,1]) \cap B(x_1,r)$, we have  $\cL^3(B(x ,r)\cap E)=0$. Repeating this argument for finitely many times shows that $\cL^3(B(x_2,r)\cap E)=0$. A contradiction. Therefore, $\cL^3(B(x,r)\setminus E)=0$ for all $x\in \Omega_t$ and all $r>0$ so small that $B(x,r)\subset \Omega_t$. 
We immediately infer that
$$
\cL^3(\Omega_t \setminus E)=0 
$$
and thus $\chi_E =\chi_{\Omega_w}$ a.e. To sum up, we have either  $E=\Omega_m$ or $E=\Omega_w$.

(2) Consider the case that $E= \Omega_m $, or equivalently $\um=\chi_E$. Define $E_t$ as in \eqref{super-level}. Then for each $t\in [0,1)$, $E_t=\Omega_m$. Therefore, $\chi_{\Omega_m}$ is the unique minimizer of \eqref{Mini pb: finite perimeter} for every $t\in [0,1)$.

Since $\Sigma_1$ is $C^2$,  
it has a tubular neighborhood $B_\a(\Sigma_1)$ of width $\a>0$,   cf. \cite[Exercise 2.11]{MR737190} and \cite[Remark 3.1]{MR4160135}. 
Given any $\rho \in C^1(\Sigma_1)$ with $0\leq \rho \leq 1$, the map
$$
\Psi_\rho: (-\a,\a) \times\Sigma_1 \to \R^3 : \,  (\varepsilon,\p)\mapsto \p+\varepsilon \rho(\p) \ns(\p),
$$ 
is a $C^1$-diffeomorphism  onto its image, where $\ns$ is the outward unit normal of $\Sigma_1$ pointing into $\Omega_t$. 
Put $\Gamma_{\varepsilon}:= \Psi_\rho(\varepsilon , \Sigma )$ and $\Omega_{\varepsilon}$ as the region enclosed by $\Gamma_{\varepsilon}$. 
Observe that  $\Omega_0=\Omega_m$ and 
$$
\Omega_m\subseteq \Omega_{\varepsilon } \subseteq \Omega_w
$$ 
for all $\varepsilon \in [0, \a)$ with sufficiently small $\a$. 
Define a functional 
$$
F_t(\varepsilon)=\gamma \Per(\Gamma_{\varepsilon};\Omega)  +   \int_{\Omega_\varepsilon } pt ^{p-1 }  \left( P_h-  \rho_s U^{\mathrm{vdW}}     \right) \, dx,\quad \varepsilon\in [0,\a).
$$
Note that  $F_t(\varepsilon)\geq F_t(0)$. By \cite[Equation~(21)]{HawkinsShaoChen2001},
\begin{align*}
 \lim\limits_{\varepsilon\to 0^+} \frac{F_t(\varepsilon)-F_t(0)}{\varepsilon} 
=& \int_{\Sigma_1} \rho \left( -2\gamma \kappa + pt ^{p-1 } P_h- pt ^{p-1 }\rho_s U^{\mathrm{vdW}} \right)\, d\Sigma_1  ,
\end{align*}
where $d\Sigma_1$ is the volume element on $\Sigma_1$. 
Thus 
$$
\int_{\Sigma_1} \rho \left( -2\gamma \kappa + pt ^{p-1 } P_h- pt ^{p-1 }\rho_s U^{\mathrm{vdW}} \right)\, d\Sigma_1\geq 0
$$
for all $\rho \in C^1(\Sigma_1)$ with $\rho\geq 0$. This implies that 
$$
  pt ^{p-1 } P_h- pt ^{p-1 }\rho_s U^{\mathrm{vdW}} \geq 2\gamma \kappa
$$
for all $t\in [0,1)$. Taking $t=0$ above yields
$$
0\geq \kappa \quad \text{on } \Sigma_1.
$$
This is a necessary condition for $E= \Omega_m $. Therefore, if $\kappa(\p)>0$ for some $\p\in \Sigma_1$, then $E \neq  \Omega_m $.

(3)
Let $\widehat{\kappa}$ be the mean curvature of $\Sigma_0$.
If $E=  \Omega_w $, then  following the above argument, we conclude that
$$
\int_{\Sigma_0} \rho \left( -2\gamma \widehat{\kappa} + pt ^{p-1 } P_h- pt ^{p-1 }\rho_s U^{\mathrm{vdW}} \right)\, d\Sigma_0\geq 0
$$
for all $\rho \in C^1(\Sigma_0)$ with $\rho\leq 0$ and $t\in [0,1)$. Here $d\Sigma_0$ is the volume element on $\Sigma_0$. 
Pushing $t\to 1^-$ implies that
$$
\widehat{\kappa} \geq \frac{p  P_h- p \rho_s U^{\mathrm{vdW}}}{2\gamma}>0 
$$
is a necessary condition for $E=\Omega_w$
However, it is well known that there is no closed hypersurface with everywhere positive mean curvature in $\R^3$.
Therefore, $E\neq \Omega_w$
\end{proof}

\begin{remark}\label{Rmk: stable}
The mean curvature condition $\kappa(\p)>0$ for some $\p\in \Sigma_1$ is satisfied by almost all macromolecules. 
This explains why diffuse interfaces are indeed more realistic in real-world solvation processes. 
It is equally important  to point out that the mean curvature condition is in some sense ``stable". 
Recall that the Hausdorff metric on compact subsets $K\subset\R^n$, $n\in \N$, is defined by
$$
d_{\cH}(K_1,K_2)=\max\left\{ \sup\limits_{x\in K_1}d(x,K_2), \sup\limits_{x\in K_2}d(x,K_1) \right\}.
$$
Given a closed surface $\Sigma$ in $\R^3$, its second normal bundle is given by
$$
\cN^2 \Sigma=\{(\p, \nu_{\Sigma}(\p), \nabla_{\Sigma} \nu_{\Sigma}(\p) ) : \, \p\in \Sigma \}\subset \R^3\times \R^3 \times \R^9,
$$
where $\nabla_{\Sigma}$ is the surface gradient defined by 
$$
\nabla_{\Sigma} \vec{v}(\p)= P_{\Sigma}(\p) \nabla \vec{v}(\p),\quad \vec{v}\in C^1(B_r(\Sigma),\R^3)
$$
for some $r>0$.
Here $P_{\Sigma}(\p)=I-\nu_{\Sigma}(\p)\otimes \nu_{\Sigma}(\p)$. 
Denote by $\cM$ the set of all connected closed surfaces in $\R^3$. Equipped with the metric $d_{\cH}$, $\cM$ is a Banach manifold, cf. \cite{MR3297714, MR3524106}. If a connected component, $\M_1$, of $\Sigma_1$ satisfies the condition in Theorem~\ref{Thm: sharp interface}, then any $\Sigma\in \cM$ that is sufficiently close to $\M_1$ with respect to the metric $d_{\cH}$ satisfies the same condition.
\end{remark}

\begin{remark}\label{Rmk: cavity}
The connectedness condition of $\Omega_t$ was used in the   proof of Theorem~\ref{Thm: sharp interface}.
It is well-known that cavities may appear inside macromolecules, which corresponds to the situation of disconnected $\Omega_m$.  
In the case of $N$ cavities inside $\Omega_m$, $\Omega_t$ consists of $N+1$ connected components. More precisely,
$$
\Sigma_1=\bigcup\limits_{j=0}^N \Gamma_j,
$$ 
where $\Gamma_j$ are $C^2-$closed and connected hypersurfaces and  $\Gamma_j$, $j=1,\cdots,N$, is the boundary of the $j-$th cavity.
Correspondingly, 
$$\Omega_t=\bigcup\limits_{j=0}^N \Omega_{t,j},
$$
where $\Omega_{t,j}$ are the connected components of $\Omega_t$ and $\Omega_{t,j}$, $j=1,\cdots,N$, is the $j-$th cavity bounded by $\Gamma_j$ and $\overline{\Omega}_{t,0}\cap \overline{\Omega}_s\neq \emptyset$.
See Figure~\ref{domain} for a picture illustration of a solute molecule with one cavity inside.
To make the convention of the mean curvature consistent, we define the orientation of $\Gamma_j$ in the following way:
\begin{itemize}
\item the outer normal of $\Gamma_0$ points into $\Omega_{b,0}$;
\item for $j=1,\cdots,N$, the outer normal of $\Gamma_j$ points into $\Omega_m$.
\end{itemize}
Under these conventions, we can follow the proof of Theorem~\ref{Thm: sharp interface} and show that $\Gamma_j$ $(j=1,\cdots,N)$ is a sharp interface iff $\Gamma_j$ has everywhere positive mean curvature, which is impossible.
Therefore, none of the cavities can be purely occupied by the solvent.
\begin{figure}
            \begin{center}
                \begin{tabular}{cc}
                         \includegraphics[width=0.4\columnwidth]{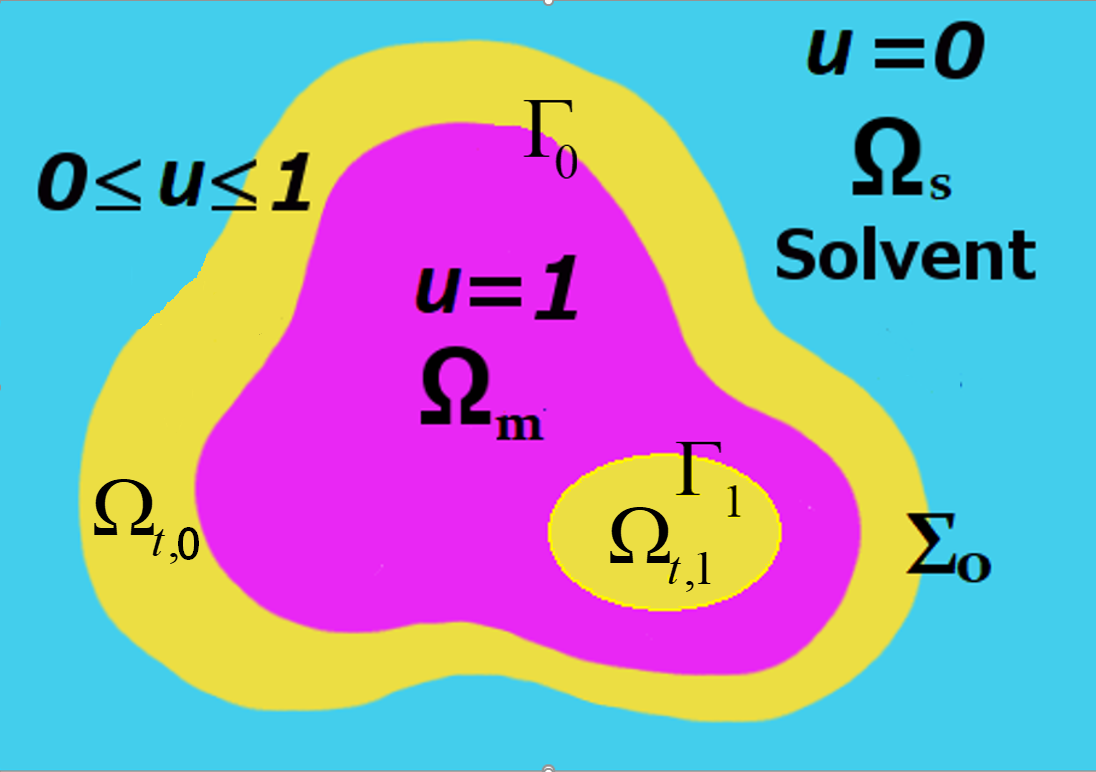}
                \end{tabular}
            \end{center}
             \caption{ Illustration of a solute with one cavity inside.
}\label{domain}
\end{figure}
\end{remark}


\section{Numerical Simulations}\label{Section 6}

The non-differentiable structure of \eqref{non penalized total energy functional final form} and the Constraints~\eqref{constrain 1} and \eqref{constrain 2} generate  an essential difficulty in the numerical simulations of \eqref{non penalized total energy functional final form}.
This motives  us to study a sequence of approximation problems.

\subsection{An Approximation Problem}

Recall the definition of $\{q_k\}_{k=1}^\infty$ from Section~\ref{Section 3}. We introduce a family of perturbed solvation free energy functionals
\begin{align}\label{perturbed energy functional}
\notag
  I_k(u)
=& \gamma \int_\Omega |\nabla u|^{q_k}\, dx + \int_\Omega \left[ P_h u^p  + \rho_s (1-u^p) U^{\mathrm{vdW}}   \right] \, dx  
  \\
&+ \int_\Omega   \left[    \rho_m \psi    - \frac{1}{2}\epsilon(u) |\nabla \psi|^2 - (q_k- u^p)  B(\psi)          \right]\, dx,
\end{align}
where $\psi\in \cA$ satisfies \eqref{GPBEk}. 
We will seek a minimizer of $I_k(\cdot)$ in $\cYk$, c.f. \eqref{Def: cYk}.
For notational brevity, we term the second line of \eqref{perturbed energy functional} $I_{{\rm p},k}(u,\psi)$.

Let $\um$ be a minimizer of \eqref{non penalized total energy functional final form}  in $\cY$  and $\pmi=\psi_{\um}$ be the solution of \eqref{GPBE} with $u=\um$.

To prepare for the main result of this section, we introduce 
$$
\Omega_{j,k}:=\{ x  \in \Omega:   {\rm dis}( x , \Omega_j)<1/k \}, \quad j\in \{m,s\}
$$ 
and
$$
\cX_k:=\{u\in \cY : u\equiv 1 \,\, \text{ in } \Omega_{m,k} \quad \text{and} \quad  u\equiv 0\,\, \text{ in } \Omega_{s,k} \},\quad k\in\N,
$$
and quote the following two lemmas from \cite{Shao2022constrained}.

\medskip
\begin{lem}{\em (\cite[Lemma~2.6]{Shao2022constrained})}
\label{tech lem 1}
For every $f\in \cX_k$, there exists a sequence $\{f_n\}_{n=1}^\infty \subset   C^\infty( \overline{\Omega})$ satisfying Constraints~\eqref{constrain 1} and \eqref{constrain 2} such that
\begin{itemize}
\item[{\em (i)}] $f_n\to f$ in $L^1(\Omega)$, and
\item[{\em (ii)}] $\|D f_n\|(\Omega) \to \|D f \|(\Omega)$ as $n\to \infty$.
\end{itemize}	
\end{lem}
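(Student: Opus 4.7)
The plan is a single-step mollification argument, made easy by the structure of $\cX_k$: any $f\in \cX_k$ is already constant ($=1$ or $=0$) on the $1/k$-thickenings $\Omega_{m,k}$ and $\Omega_{s,k}$, so mollification at a scale strictly below $1/k$ cannot disturb the prescribed values on $\Omega_m$ and $\Omega_s$. The first move is to extend $f$ by zero to $\bar f\colon\R^3\to[0,1]$. Because $\partial\Omega\subset\partial\Omega_s$, the set $\Omega_{s,k}$ contains an open one-sided neighborhood of $\partial\Omega$ inside $\Omega$ on which $f\equiv 0$; hence $\bar f$ vanishes in a full open neighborhood of $\partial\Omega$ in $\R^3$. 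No jump is added across $\partial\Omega$, so $\bar f\in BV(\R^3)$ with $\|D\bar f\|(\R^3)=\|Df\|(\Omega)$.

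Next, fix a standard nonnegative mollifier $\eta\in C^\infty_c(B(0,1))$ with $\int\eta=1$, set $\eta_\varepsilon(x)=\varepsilon^{-3}\eta(x/\varepsilon)$, pick a sequence $\varepsilon_n\searrow 0$ with $\varepsilon_n<1/k$, and define $f_n:=(\eta_{\varepsilon_n}*\bar f)|_{\overline\Omega}\in C^\infty(\overline\Omega)$. The pointwise bound $0\le f_n\le 1$ is automatic. For any $x\in\Omega_m$, every $y\in B(x,\varepsilon_n)$ satisfies ${\rm dist}(y,\Omega_m)\le\varepsilon_n<1/k$, hence lies in $\Omega_{m,k}$ where $f=1$; averaging against $\eta_{\varepsilon_n}$ then gives $f_n(x)=1$. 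The symmetric argument, using also $\bar f\equiv 0$ outside $\Omega$, gives $f_n\equiv 0$ on $\Omega_s$. Thus Constraints \eqref{constrain 1} and \eqref{constrain 2} are preserved throughout the sequence.

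For (i), the standard $L^1$-convergence of mollifications yields $f_n\to f$ in $L^1(\Omega)$ immediately. For (ii), the contractive property of mollification in $BV$ gives $\int_{\R^3}|\nabla f_n|\,dx\le\|D\bar f\|(\R^3)=\|Df\|(\Omega)$. Because $\bar f$ vanishes in a neighborhood of $\partial\Omega$ in $\R^3$, so does $f_n$ once $\varepsilon_n$ is small, and hence $\int_\Omega|\nabla f_n|\,dx=\int_{\R^3}|\nabla f_n|\,dx$. Combining this upper bound with the $L^1$-lower semicontinuity of total variation (Proposition~\ref{A1: lsc of BV}) applied to $f_n\to f$ in $L^1(\Omega)$, which supplies $\liminf_n \int_\Omega |\nabla f_n|\,dx \ge \|Df\|(\Omega)$, forces $\lim_n \int_\Omega |\nabla f_n|\,dx = \|Df\|(\Omega)$, as required.

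The only genuinely delicate point is confirming that the zero-extension creates no spurious total variation on $\partial\Omega$; this is exactly the role of the inclusion $\partial\Omega\subset\partial\Omega_s$ in the geometric setup combined with the clause $u\equiv 0$ on $\Omega_{s,k}$ in the definition of $\cX_k$. Once that buffer is exploited, a single global mollifier at scale $<1/k$ does all the work, and no Anzellotti--Giaquinta-style partition-of-unity refinement is needed.
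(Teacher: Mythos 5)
Your argument is correct, and it is the natural one for this lemma: the paper itself does not reprove the statement (it is imported verbatim from \cite[Lemma~2.6]{Shao2022constrained}), but the standard proof is exactly the single--scale mollification you give, with the $1/k$--buffer built into $\cX_k$ doing all the work of preserving Constraints~\eqref{constrain 1}--\eqref{constrain 2}, and the inclusion $\partial\Omega\subset\partial\Omega_s$ guaranteeing that the zero extension adds no trace term on $\partial\Omega$ (Proposition~\ref{A1: extension of BV}), so that the contractivity of convolution plus lower semicontinuity squeezes the total variations. No Anzellotti--Giaquinta partition of unity is needed here, as you observe. The only place where you should add a word is the claim that every $y\in B(x,\varepsilon_n)$ with $x\in\Omega_m$ lies in $\Omega_{m,k}$: by definition $\Omega_{m,k}\subseteq\Omega$, so you must also know $B(x,\varepsilon_n)\subset\Omega$; this does hold, because if some point of $\partial\Omega$ were within $1/k$ of $\Omega_m$ then nearby interior points would lie in $\Omega_{m,k}\cap\Omega_{s,k}$ (both sets being open), forcing $\cX_k=\emptyset$ and making the lemma vacuous, so in the nontrivial case ${\rm dist}(\Omega_m,\partial\Omega)\ge 1/k>\varepsilon_n$.
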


\medskip
\begin{lem}{\em (\cite[Lemma~2.7]{Shao2022constrained})}
\label{tech lem 2}
For every $f\in \cY$, we define $\{f_k\}_{k=1}^\infty \subset BV(\Omega)$ by
\begin{align*}
f_k(x)=
\begin{cases}
1 ,\quad & x\in \Omega_{m,k}\\
0, & x\in \Omega_{s,k}\\
f(x), & \text{elsewhere}.
\end{cases}
\end{align*}
If $\Sigma_i\in C^2$ with $i\in \{0,1\}$, then
\begin{itemize}
\item[{\em (i)}] $f_k\to f$ in $L^1(\Omega)$ and
\item[{\em (ii)}] $\|D f_k\|(\Omega) \to \|D f \|(\Omega)$ as $k\to \infty$.
\end{itemize}	
\end{lem}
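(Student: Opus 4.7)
The plan is to prove the two assertions in turn, using the $C^2$ regularity of $\Sigma_0$ and $\Sigma_1$ to obtain two-sided tubular neighborhoods of uniform width $O(1/k)$. Assertion (i) follows from a dominated convergence argument: since $f\in \cY$ already satisfies $f=1$ on $\Omega_m\subset \Omega_{m,k}$ and $f=0$ on $\Omega_s\subset \Omega_{s,k}$, the functions $f_k$ and $f$ agree outside $(\Omega_{m,k}\setminus \Omega_m)\cup (\Omega_{s,k}\setminus \Omega_s)$. This set is contained in the tubular $1/k$-neighborhood of $\Sigma_0\cup\Sigma_1$ and hence has $\cL^3$-measure $O(1/k)$; combined with $|f_k-f|\leq 1$ pointwise, this gives $f_k\to f$ in $L^1(\Omega)$.

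For (ii), lower semicontinuity of the total variation (Proposition~\ref{A1: lsc of BV}) together with (i) give $\|Df\|(\Omega)\leq \liminf_{k\to\infty} \|Df_k\|(\Omega)$, so only the matching upper bound needs work. My strategy is to split $\|Df_k\|$ along the partition
\begin{equation*}
\Omega = \Omega_{m,k}^{\circ}\sqcup \partial\Omega_{m,k}\sqcup \Omega_{s,k}^{\circ}\sqcup \partial\Omega_{s,k}\sqcup \bigl(\Omega\setminus(\overline{\Omega_{m,k}}\cup\overline{\Omega_{s,k}})\bigr).
\end{equation*}
On the two open interiors $f_k$ is constant so $\|Df_k\|$ vanishes; on the outer region $f_k\equiv f$; and on each $\partial\Omega_{j,k}$ the BV jump formula on the $C^1$ surface $\partial\Omega_{j,k}$ gives mass equal to the surface integral of $|1-\mathrm{Tr}\,f|$ or $|\mathrm{Tr}\,f|$. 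Hence
\begin{equation*}
\|Df_k\|(\Omega) = \|Df\|\bigl(\Omega\setminus(\overline{\Omega_{m,k}}\cup\overline{\Omega_{s,k}})\bigr) + \int_{\partial\Omega_{m,k}} |1-\mathrm{Tr}\,f|\,d\cH^2 + \int_{\partial\Omega_{s,k}} |\mathrm{Tr}\,f|\,d\cH^2.
\end{equation*}
As $k\to\infty$, the bulk term increases monotonically to $\|Df\|(\Omega_t)$, capturing the diffuse part of $Df$. Parametrizing $\partial\Omega_{j,k}$ via the normal exponential map $\Psi_j(\varepsilon,\p)=\p+\varepsilon \nu_{\Sigma_j}(\p)$, which by $C^2$-regularity is a diffeomorphism on a fixed slab $(-\delta,\delta)\times\Sigma_j$ with Jacobian tending to $1$ uniformly on $\Sigma_j$, the two surface integrals should converge to $\int_{\Sigma_1}|1-f^+|\,d\cH^2$ and $\int_{\Sigma_0}|f^+|\,d\cH^2$, i.e.\ to the jump masses $\|Df\|(\Sigma_1)$ and $\|Df\|(\Sigma_0)$. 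Summing the three limits reconstructs $\|Df\|(\Omega)$ exactly.

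The main obstacle will be the convergence of the traces $\mathrm{Tr}\,f|_{\partial\Omega_{j,k}}$ to the one-sided BV trace $f^+$ of $f$ on $\Sigma_j$, because the classical BV trace theorem is stated on the fixed boundary of a Lipschitz domain rather than on a sliding family of parallel surfaces. To handle this, I would first approximate $f$ strictly in $BV$ by $C^\infty(\overline{\Omega})$ functions (Lemma~\ref{tech lem 1}, or the standard strict-density theorem for $BV$), for which the lemma reduces on the tubular neighborhood to elementary Fubini and uniform continuity on the slab $(-\delta,\delta)\times\Sigma_j$. Strict convergence of the smooth approximants then transfers to $L^1$-convergence of the traces on each $\partial\Omega_{j,k}$ via Reshetnyak-type continuity, with uniformity in $k$ provided by the bi-Lipschitz structure of $\Psi_j$. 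Passing to the iterated limit (first in the smoothing parameter, then in $k$) will close the upper bound and complete (ii).
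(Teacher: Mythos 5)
The paper does not prove this lemma at all --- it is imported verbatim from \cite[Lemma~2.7]{Shao2022constrained} --- so there is no in-paper argument to compare against, and your proposal has to stand on its own. Its architecture is the natural one: part (i), the lower bound in (ii) via Proposition~\ref{A1: lsc of BV}, and the gluing decomposition of $\|Df_k\|(\Omega)$ via Proposition~\ref{A1: extension of BV} are all correct (with the small precisions that the traces in your two surface integrals must be taken from the \emph{exterior} of $\Omega_{m,k}$, resp.\ $\Omega_{s,k}$, and that the bulk term converges to $\|Df\|(\Omega_t)$ by monotone convergence of the measure on an increasing union of open sets). Everything therefore reduces, as you say, to
\[
\int_{\partial\Omega_{m,k}}|1-\Tr^+ f|\,d\cH^2\;\longrightarrow\;\int_{\Sigma_1}|1-f^+|\,d\cH^2,
\]
and its analogue on $\Sigma_0$.

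The gap is in your proposed mechanism for this last step. For each \emph{fixed} $k$, strict $BV$ approximation plus Reshetnyak-type continuity does give $\Tr_{\partial\Omega_{m,k}}w_n\to\Tr_{\partial\Omega_{m,k}}f$ in $L^1$; but to pass to the iterated limit you need this convergence to be uniform in $k$, and the ``bi-Lipschitz structure of $\Psi_j$'' does not supply it. The trace estimate on the collar $A_k=\Omega_{m,k}\setminus\overline{\Omega}_m$ has the form $\|\Tr(w_n-f)\|_{L^1(\partial\Omega_{m,k})}\le C\bigl(k\,\|w_n-f\|_{L^1(A_k)}+|D(w_n-f)|(A_k)\bigr)$, and strict convergence controls $|Dw_n|(\Omega)\to|Df|(\Omega)$ but \emph{not} $|D(w_n-f)|(A_k)\to 0$ (that would be norm convergence in $BV$, which fails whenever $Df$ has a singular part); moreover the rate on $\partial\Omega_{m,k}$ degenerates as $k\to\infty$ precisely because $|Df|$ may concentrate on $\Sigma_1$. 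The clean repair avoids the double limit entirely: slice along the normal fibers of the $C^2$ tubular diffeomorphism $\Psi_1(\varepsilon,\p)=\p+\varepsilon\nu_{\Sigma_1}(\p)$ and apply the one-dimensional fundamental theorem for $BV$ functions on each fiber, which yields the single estimate
\[
\int_{\partial\Omega_{m,k}}\bigl|\Tr^+ f-f^+\circ\pi\bigr|\,d\cH^2\;\le\;C\,|Df|\bigl(\{x:\,0<{\rm dis}(x,\Omega_m)\le 1/k\}\bigr),
\]
with $\pi$ the nearest-point projection onto $\Sigma_1$ and $C$ absorbing the (uniformly bounded) Jacobians. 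The right-hand side tends to $0$ because the sets decrease to $\emptyset$ and $|Df|$ is a finite measure. With this substitution (and the same on $\Sigma_0$), your decomposition closes and the lemma follows.
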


\medskip
The theoretic basis of the numerical simulation is  the following theorem.

\begin{theorem}\label{Thm: existence of minimizer Ek}
For each $k=1,2,\cdots$, there exists a unique $\umk \in \cY_k\cap \cY$ such that $I_k(\umk)=\underset{{u\in \cYk}}{\min} I_k(u)$.
If, in addition,   $\Sigma_i\in C^2$, $i\in \{0,1\}$,
$$
\lim\limits_{k\to \infty} I_k(\umk)=I(\um),
$$ 
and  as $k\to \infty$  
$$
\umk \to \um \quad \text{in } L^r(\Omega)
$$
for all $r\in [1,\infty)$  and
$$
\pmk \to \pmi \quad \text{in } H^1(\Omega),
$$
where $\pmk=\psi_{\umk}$ is the solution to \eqref{GPBEk} with $u=\umk$.
\end{theorem}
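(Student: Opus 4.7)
The plan is to split into existence/uniqueness at each fixed $k$ (via the direct method) and convergence as $k\to\infty$ (via a $\Gamma$-convergence style argument based on Lemmas~\ref{tech lem 1} and~\ref{tech lem 2}). For fixed $k$, take a minimizing sequence $\{u_n\}\subset\cYk$ for $I_k$. The $L^\infty$ bound $|u_n|\leq\sqrt[p]{q_k}$ built into $\cYk$ together with the uniform polar estimate \eqref{bound Eu} forces $\int_\Omega|\nabla u_n|^{q_k}\,dx$ to stay bounded, so $\{u_n\}$ is bounded in $W^{1,q_k}(\Omega)$. A subsequence converges weakly in $W^{1,q_k}(\Omega)$ and, by Rellich–Kondrachov, strongly in $L^r(\Omega)$ for every $r\in[1,\infty)$, to some $u\in\cYk$. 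Weak lower semicontinuity of the convex $q_k$-gradient functional, together with Proposition~\ref{Prop: GPBE converg} for the polar part and dominated convergence for the remaining nonpolar terms, gives $I_k(u)\leq\liminf_n I_k(u_n)$, so $u=\umk$ is a minimizer. A truncation argument in the spirit of Theorem~\ref{Thm: existence and uniqueness minimizer - nonpolar} shows $\umk\in\cY$. Uniqueness follows from the strict convexity of $u\mapsto\int|\nabla u|^{q_k}$ for $q_k>1$, combined with the convexity of the remaining terms (the polar contribution inherits convexity in $u$ from the concavity of $u\mapsto\inf_{\psi}G^k_u(\psi)$, a standard consequence of Proposition~\ref{Prop: GPBE} applied to the affine-in-$u^p$ coefficients of $G^k_u$).

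For the convergence part, the first step is the $\limsup$ inequality $\limsup_k I_k(\umk)\leq I(\um)$. Applying Lemma~\ref{tech lem 2} to $\um$ produces $\tilde u_k\in\cX_k$ with $\tilde u_k\to\um$ in $L^1(\Omega)$ and $\|D\tilde u_k\|(\Omega)\to\|D\um\|(\Omega)$; Lemma~\ref{tech lem 1} then smooths each $\tilde u_k$ into $w_{k,n}\in C^\infty(\overline\Omega)\cap\cYk\cap\cY$ with $w_{k,n}\to\tilde u_k$ in $L^1$ and $\int_\Omega|\nabla w_{k,n}|\,dx\to\|D\tilde u_k\|(\Omega)$ as $n\to\infty$. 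For each fixed smooth $w$, dominated convergence gives $\int_\Omega|\nabla w|^{q_k}\,dx\to\int_\Omega|\nabla w|\,dx$ as $q_k\to 1$. A standard diagonal extraction $n=n(k)$ then produces $v_k:=w_{k,n(k)}\in\cYk\cap\cY$ with $v_k\to\um$ in $L^1$ and $I_k(v_k)\to I(\um)$, combining the gradient estimates with Proposition~\ref{Prop: GPBE converg} applied to $(v_k,\psi_{v_k})$. Minimality of $\umk$ then gives $I_k(\umk)\leq I_k(v_k)\to I(\um)$.

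For the matching $\liminf$ and compactness of $\{\umk\}$, the $\limsup$ bound together with \eqref{bound Eu} makes $\int_\Omega|\nabla\umk|^{q_k}\,dx$ uniformly bounded; by H\"older's inequality,
$$
\|D\umk\|(\Omega)=\int_\Omega|\nabla\umk|\,dx\leq|\Omega|^{(q_k-1)/q_k}\Bigl(\int_\Omega|\nabla\umk|^{q_k}\,dx\Bigr)^{1/q_k},
$$
so $\{\umk\}$ is bounded in $BV(\Omega)$. Proposition~\ref{A1: compact of BV} yields a subsequence with $\umk\to u^\ast$ in $L^1(\Omega)$, upgraded to $L^r(\Omega)$ for all $r\in[1,\infty)$ by interpolation with the uniform bound $\|\umk\|_\infty\leq\sqrt[p]{q_k}\to 1$; the pointwise constraints pass to the limit, so $u^\ast\in\cY$. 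The H\"older estimate above, combined with the lower semicontinuity from Proposition~\ref{A1: lsc of BV} and $|\Omega|^{(q_k-1)/q_k}\to 1$, yields $\gamma\|Du^\ast\|(\Omega)\leq\liminf_k\gamma\int_\Omega|\nabla\umk|^{q_k}\,dx$, and Proposition~\ref{Prop: GPBE converg} together with dominated convergence handles the remaining terms to give $I(u^\ast)\leq\liminf_k I_k(\umk)$. Chaining with the $\limsup$ bound forces $I(u^\ast)=I(\um)$, hence $u^\ast=\um$ by the uniqueness in Theorem~\ref{Thm: existence of global minimizer}; a standard subsequence principle promotes this to the full sequence, and the $H^1$ convergence of $\pmk$ is then immediate from Proposition~\ref{Prop: GPBE converg}. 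The main difficulty is the $\Gamma$-liminf for the gradient term across the varying exponents $q_k\to 1^+$; the elementary H\"older trick works cleanly only because the uniform $L^\infty$ bound on $\umk$ makes the constant $|\Omega|^{(q_k-1)/q_k}$ tend to $1$.
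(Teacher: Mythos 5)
Your proposal is correct and follows essentially the same route as the paper: direct method plus strict convexity (of the $q_k$-gradient term, of $P_h u^p$, and of the sup-over-$\psi$ polar part) for existence and uniqueness, truncation for $\umk\in\cY$, the Jensen/H\"older bound $\int_\Omega|\nabla \umk|^{q_k}\,dx\geq \|\nabla \umk\|_1^{q_k}|\Omega|^{1-q_k}$ for the uniform $BV$ bound and the liminf inequality, and Lemmas~\ref{tech lem 1}--\ref{tech lem 2} with Proposition~\ref{Prop: GPBE converg} for the limsup. The only (minor, and arguably cleaner) deviation is that you anchor the recovery sequence at $\um$ itself, closing the argument via $I(\um)\leq I(u^{\ast})\leq\limsup_k I_k(\umk)\leq I(\um)$, whereas the paper builds it at the subsequential limit $u_0$ and adds a separate comparison step $I(\um)\geq I_k(\umk)$ (via the perturbed PB solution at $\um$) to identify $u_0=\um$.
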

\begin{proof}
(i) The existence and uniqueness of a minimizer $\umk \in \cYk$ of $I_k(\cdot)$ for each $k$ can be proved in the same manner as in Theorem~\ref{Thm: existence of global minimizer}.

(ii) We will show that $\umk$ is a global minimizer of $I_k(\cdot)$ iff $(\umk,\pmk)$ is a saddle point of
\begin{align}\label{def: Lk}
\hspace*{-2em}
    L_k (u,\psi)   
  := \! \int_\Omega \left[ \gamma |\nabla u|^{q_k}   + P_h u^p  + \rho_s (1-u^p) U^{\mathrm{vdW}} \right] \, dx    
 + \int_\Omega   \left[     \rho_m \psi    - \frac{1}{2} \epsilon (u) |\nabla \psi|^2 - (q_k- u^p  ) B(\psi)  \right]\, dx 
\end{align}
in $\cYk\times \sA$, where
$$
\sA:=\{v\in \cA: \|v\|_{H^1}\leq  \widetilde{C}_0 \text{ and }  \|v\|_\infty \leq  \widetilde{C}_0\}.
$$ 
Here $\widetilde{C}_0 $ is the constant in \eqref{est for psi 1}.
Proposition~\ref{Prop: GPBE}   shows that   $\pmk\in \sA$.
Denote by $\cS_k$ the set of all saddle points of $L_k$. Recall that $(u_0,\psi_0)\in \cS_k$ iff 
\begin{equation}
\label{def: saddle pt}
L_k(u_0, \psi)\leq L_k(u_0,\psi_0)\leq    L_k(u,\psi_0), \quad \forall (u,\psi)\in \cYk \times \sA .
\end{equation}
It follows from Proposition~\ref{Prop: GPBE} and Theorem~\ref{Thm: existence of global minimizer} that
$$
I_k(\umk)=:M_k=\underset{{u\in \cYk} }{\min}\underset{{\psi\in \sA}}{\max} L_k(u,\psi) .
$$

Note that   $\cYk$ and $\sA$ are closed and convex in $W^{1,q_k}(\Omega)$ and $H^1(\Omega)$, respectively. Moreover, 
$$
[u\mapsto L_k(u,\psi)] \text{ is strictly convex and lower semi-continuous  } \forall \psi\in \sA,
$$ 
and
$$
[\psi\mapsto L_k(u,\psi)] \text{ is strictly concave and upper semi-continuous  } \forall u\in \cYk.
$$
Since $\sA$ is bounded in $H^1(\Omega)$,
\cite[Remark~VI.2.3]{MR1727362} implies that 
$$
\underset{{\psi\in \sA}}{\max}  \inf\limits_{u\in \cYk}  L_k(u,\psi) =\underset{{u\in \cYk} }{\min}\underset{{\psi\in \sA}}{\max}    L_k(u,\psi)=M_k.
$$
It follows from  the direct method of Calculus of Variation that the infimum is achieved. Therefore,
\begin{equation}
\label{saddle point} 
\underset{{\psi\in \sA}}{\max}  \underset{{u\in \cYk}}{\min}  L_k(u,\psi) =\underset{{u\in \cYk} }{\min}\underset{{\psi\in \sA}}{\max}    L_k(u,\psi)= L_k(\umk,\pmk).
\end{equation}
By \cite[Proposition~VI.1.2]{MR1727362}, $(\umk,\pmk)\in \cS_k$.
Conversely, 
if $(u_0,\psi_0)\in \cS_k$, then \eqref{def: saddle pt} and Proposition~\ref{Prop: GPBE} show that $\psi_0 $ is the solution of \eqref{GPBEk} with $u=u_0$. What is more, since  \eqref{saddle point} still holds true if we replace $(\umk,\pmk)$ by $(u_0,\psi_0)$, we infer that $u_0=\umk$.

If $\cL^3(\{\umk >1 \}\cup \{\umk <0\})>0$, define
\begin{align*}
\bar{u}_{{\rm min},k}(x)=
\begin{cases}
1,\quad & \text{if } \umk(x)>1 ,\\
0,  & \text{if } \umk(x)<0 ,\\
\umk(x),  & \text{elsewhere.}
\end{cases}
\end{align*}
Then direct computations show that
$$
L_k (\bar{u}_{{\rm min},k},\pmk) < L_k (u_k,\pmk).
$$
A contradiction. Hence, $\umk\in \cY$. 

(iii) Fix $v\in \cY_k$. Then, by \eqref{bound Eu},
$ 
  G_v^k (\psi_v) < \widetilde{C}_1,
$ 
where $\psi_v$ is the solution to \eqref{GPBEk} with $u=v$.
Then
\begin{align*}
  I_k(v) 
\leq  \gamma \int_\Omega |\nabla v |^{q_k}\, d x +2P_h  {\rm Vol}(\Omega )   - \int_{\Omega\setminus \Omega_s	} \rho_s U^{{\rm vdW}}\, d x  
  +\widetilde{C}_0 \|\rho_m\|_\infty {\rm Vol}(  \Omega_m) 
\leq   C_2 ,
\end{align*}
where $\widetilde{C}_0$ is the constant in Proposition~\ref{Prop: GPBE} and $C_2$  is independent of $k$ and $v$.
This yields that
\begin{align}
\notag C_2 \geq & I_k(\umk) \geq \gamma \int_{\Omega} |\nabla \umk|^{q_k} \, d x +P_h \|\umk\|_p^p +C_3 -\widetilde{C}_1 \\
\label{est q1}
\geq  & \gamma \| \nabla \umk\|_1^{q_k} \left( {\rm Vol}(\Omega) \right)^{1-{q_k}}  +P_h \|\umk\|_p^p+ C_3 -\widetilde{C}_1 ,
\end{align}
where 
$C_3= \int_{\Omega  \setminus \Omega_m} \rho_s U^{{\rm vdW}}\, d x$. 
We thus infer from \eqref{est q1}   that 
$$
\|  \umk\|_{W^{1,1} }=\|\umk\|_{BV} \leq  C_4
$$
for some $C_4$ independent of $k$. 
Proposition~\ref{A1: compact of BV} implies that there exists a subsequence of $\{\umk\}_{k=1}^\infty$, not relabelled, converging to some $u_0\in \cY$ in $L^1(\Omega)$.  
The Riesz-Thorin interpolation theorem then implies that $\umk \to u_0$ in $L^r(\Omega)$ for all $r\in [1,\infty)$ as $k \to \infty$.
Note that 
$$
  \int_{\Omega} |\nabla \umk |^{q_k} \, d x    
\geq    \| \nabla \umk\|_1^{q_k} \left( {\rm Vol}(\Omega) \right)^{1-{q_k}} .
$$
Then it follows from Propositions~\ref{A1: lsc of BV}  and  \ref{Prop: GPBE converg} that 
$$
I (u_0) \leq \liminf\limits_{k\to \infty}I_{q_k}(\umk).
$$
On the other hand, we define
\begin{align*}
w_n(x)=
\begin{cases}
1 ,\quad & x\in \Omega_{m,n}\\
0, & x\in \Omega_{s,n}\\
u_0(x), & \text{elsewhere}.
\end{cases}
\end{align*}
We will show that
\begin{equation}\label{converg q1}
\limsup\limits_{k\to \infty} I_k (\umk) \leq I(w_n).
\end{equation}
Lemma~\ref{tech lem 1} implies that we can find a sequence $\{w_{n,i}\}_{i=1}^\infty $ such that   $w_{n,i}\in C^\infty(\overline{\Omega})\cap \cY_k$  for all $k$ 
and  
$$
w_{n,i}\to w_n \,\, \text{ in } L^1(\Omega)  \quad \text{and} \quad \|D w_{n,i}\|(\Omega) \to \|D w_n\| (\Omega) \quad \text{as } i\to \infty.
$$
Since $\umk$ minimizes $I_k(\cdot)$ in $\cY_k$, we have
$$
 I_k (\umk) \leq  I_k (w_{n,i}).
$$
Pushing $k\to \infty$, the dominated convergence theorem  and Proposition~\ref{Prop: GPBE converg} imply that
$$
\limsup\limits_{k\to \infty} I_k (\umk) \leq I  (w_{n,i}).
$$
Then Lemma~\ref{tech lem 1} and Proposition~\ref{Prop: GPBE converg} immediately yield  \eqref{converg q1}.
Now Lemma~\ref{tech lem 2} and Proposition~\ref{Prop: GPBE converg} give that
$$
\limsup\limits_{k\to \infty} I_k (\umk) \leq I  (u_0).
$$
Finally, the convergence of $\pmk$ is a direct consequence of Proposition~\ref{Prop: GPBE converg}.

(iv) Denote by $\psi_k$ the solution of \eqref{GPBEk} with $u=\um$. Then by Proposition~\ref{Prop: GPBE},
\begin{align*}
I(\um) \geq  \Inp(\um) + \Ip(\um,\psi_k) \geq \Inp(\um) + I_{{\rm  p},k}(\um,\psi_k) 
\geq   I_k(\umk).
\end{align*}
This yields
$$
I(\um) \geq \lim\limits_{k\to \infty}I_k(\umk)=I(u_0).
$$
By the uniqueness of a global minimizer of $I(\cdot)$, we conclude that $u_0=\um$.
\end{proof}

\subsection{Variation of Solvation Free Energy}


Motivated by Theorem~\ref{Thm: existence of minimizer Ek}, we will study the numerical simulations of the approximating functional  \eqref{perturbed energy functional}.
As the first step, we will  derive the variational formulas  of \eqref{perturbed energy functional} at $\umk$. 
Recall that $\umk$ minimizes \eqref{perturbed energy functional} in $\cYk$ iff $(\umk,\pmk) $ is a saddle point of \eqref{def: Lk}  in $\cYk \times \sA$, where $\pmk$ solves \eqref{GPBEk} with $u=\umk$. 
This means that
$$
L_k(\umk,\pmk)=\underset{{u\in \cYk}}{\min} L_k(u,\pmk).
$$
Given any $\phi\in C^\infty_0(\Omega_t)$, as $\umk\in \cY$, for sufficiently small $\delta>0$,
$$
\umk+\varepsilon \phi \in \cYk,\quad |\varepsilon|<\delta.
$$
Therefore,  we can verify that $\umk$ satisfies
\begin{align*}
&\gamma \int_\Omega  q_k |\nabla \umk |^{q_k-2} \nabla \umk   \cdot \nabla \phi\, dx +\int_\Omega \left[ p\umk^{p-1} \left( P_h  - \rho_s   U^{\mathrm{vdW}}   \right) \phi \right] \, dx  \\
+ & \int_\Omega \left[ p\umk^{p-1} \left(  B(\pmk)  +\frac{\epsilon_s-\epsilon_m}{2} |\nabla \pmk|^2   \right) \phi \right] \, dx  
  =0
\end{align*}
for all $\phi\in C^\infty_0(\Omega_t)$. 
Therefore, $\umk$ solves
\begin{align*}
 \gamma q_k \div \left(  |\nabla u |^{q_k-2}\nabla u  \right) -  p u^{p-1} V(\pmk)=0 \quad \text{in }\Omega_t 
\end{align*}
in the weak sense,
where 
$$
V ( \psi)=P_h  - \rho_s   U^{\mathrm{vdW}} +B(\psi)   + \frac{\epsilon_s-\epsilon_m}{2} |\nabla \psi|^2 .
$$
In view of \eqref{GPBEk}, $(\umk,\pmk)$ solves the following elliptic system
\begin{equation}
\label{EL-sys}
\left\{\begin{aligned}
\div (\epsilon(u ) \nabla \psi)  + (q_k-u^p) \sum\limits_{j=1}^{N_c} c_j^\infty q_j e^{- \beta \psi q_j }   &=  -\rho_m   &&\text{in}&&\Omega;\\
\psi&=\psi_\infty   &&\text{on}&&\partial\Omega; \\
\gamma q_k \div \left(  |\nabla u |^{q_k-2}\nabla u  \right) -p u^{p-1} V (\psi)   &=  0   &&\text{in}&&\Omega_t;\\
u & =1 &&\text{on}&&\Sigma_1;\\
u & =0 &&\text{on}&&\Sigma_0. 
\end{aligned}\right.
\end{equation}

\begin{remark}
The approach in this section actually gives a solution to the variational analysis of \eqref{totalfunctional1} with Constraints \ref{constrain 1} and \ref{constrain 2},
which provides a complete answer to a question  in our previous work \cite{Shao2022constrained}.
\end{remark}

\subsection{Computational methods}\label{Section 6.3}

This section presents the computational methods and algorithms for the solution of the coupled system \eqref{EL-sys} and its associated parameterization process. 
The solution of \eqref{EL-sys} provides a physically sound ``diffuse solute-solvent  interface profile" $u$ and the electrostatic potential $\psi $, and thereby the calculation of the total solvation free energy.

While solving for $u$ and $\psi$, the surface evolution equation and the perturbed PB equation cannot be decoupled and thus need to be solved simultaneously.
In the following, we first describe in more detail  about the solution methods for each equation and their discretized formulations. Then the scheme for the convergence of two coupled equations is presented as well as a simple parameterization approach for optimal parameter values. 


\subsubsection{The perturbed Poisson-Boltzmann equation}\label{section 6.3.1}

For the solution of perturbed PB (PPB) equation, we adopted the finite difference scheme. Thanks to the continuous dielectric function, an accurate solution can be achieved with a standard second-order center difference scheme. Specifically, for a solvent without salt, the PPB equation can be simplified to a perturbed Poisson equation.  If the position $(x_i,y_j,z_k)$ is represented by the pixel $(i,j,k)$, its discretized form becomes
\begin{eqnarray}\label{disppb}\nonumber
	\epsilon(i+\frac{1}{2},j,k)[\psi(i+1,j,k)-\psi(i,j,k)]&-& \epsilon(i-\frac{1}{2},j,k)[\psi(i-1,j,k)-\psi(i,j,k)]\\ \nonumber
	\nonumber + \epsilon(i,j+\frac{1}{2},k)[\psi(i,j+1,k)-\psi(i,j,k)]&-&
	\epsilon(i,j-\frac{1}{2},k)[\psi(i,j-1,k)-\psi(i,j,k)]\\ \nonumber +
	\epsilon(i,j,k+\frac{1}{2})[\psi(i,j,k+1)-\psi(i,j,k)]&-& \epsilon(i,j,k-\frac{1}{2})[\psi(i,j,k-1)-\psi(i,j,k)] 
	  = -q(i,j,k)/h
\end{eqnarray}
where the uniform grid spacing $h$ is applied at $x$, $y$ and $z$ directions, and $\epsilon(i+\frac{1}{2},j,k)=\epsilon (u(x_i+\frac{1}{2}h,y_j,z_k))$, $ q(i,j,k)$ is used to describe the fractional charge at grid point $(x_i,y_j,z_k)$. The fractional charge is calculated by  the second-order interpolation (trilinear) of the charge density $\rho_m$. Then a standard  linear algebraic equation system  is resulted from the  the discretized perturbed Poisson equation in the form of  $AX=B$, in which $X$ is the targeted solution. Matrix $A$ is the discretization matrix and $B$ is the source term according to the discrete charges. 

The boundary condition of PPB equation is computed via the summation of electrostatic potential contributions of individual atom charges \cite{Geng:2007a}. The resulted linear system can be solved by various linear solvers (like biconjugate gradient in this study) together with pre-conditioners for potential acceleration. $0$ can be used for the initial guess of the solution and convergence tolerance is set as a small number such as $10^{-6}$. It has been shown that the designed PB solver is capable of delivering second-order accuracy \cite{ZhanChen:2010a}.

\subsubsection{The surface evolution equation}\label{section 6.3.2}
The solution of the surface evolution equation can be attained via the following parabolic PDE as done in earlier work \cite{Bates:2009,ZhanChen:2010a}.
\begin{equation}\label{see}
	\frac{\partial u}{\partial t}= |\nabla u |^{2-q_k}\left[\div\left(\gamma q_k\frac{\nabla u}{ |\nabla u |^{2-q_k}}\right)
	+ p u^{p-1} V\right],
\end{equation}
As a result, the steady state solution of Equation \eqref{see} can be directly taken as the solution of the original elliptic equation. 

Computationally, the equation \eqref{see}  can be expanded into a form as follows.
\begin{eqnarray}\label{exp1}\nonumber
\frac{\partial u}{\partial t} 
 =& \gamma q_k\frac{(u_x^2+u_y^2+(q_k-1)u_z^2)u_{zz}+(u_x^2+(q_k-1)u_y^2+u_z^2)u_{yy}
		+((q_k-1)u_x^2+u_y^2+u_z^2)u_{xx}}{u_x^2+u_y^2+u_z^2}\\ 
\nonumber		  &- \gamma(2-q_k)q_k\frac{2u_xu_yu_{xy}+2u_xu_zu_{xz}+  2u_zu_yu_{yz}}
	{u_x^2+u_y^2+u_z^2}\\
	&+\left(\sqrt{u_x^2+u_y^2+u_z^2}\right)^{2-q_k} p u^{p-1} V.
\end{eqnarray}
In particular, the time-dependent derivative is carried out by explicit Euler scheme. Note that other implicit schemes can be designed to improve the solution efficiency and will be pursued later. The first and second order spatial derivatives are handled by finite difference schemes \cite{ZhanChen:2010a}. To impose the domain decomposition in  \eqref{EL-sys}, we let $u$  be fixed as one in the pure solute area $\Omega_m$ and as zero in the pure solvent region $\Omega_s$. Here the pure solute area is numerically defined to be enclosed by a smoothed Van Der Waals surface (vdW) and the the pure solvent region is the area outside a smoothed solvent accessible surface (SAS). The initial value of $u$ in between  $\Omega_m$ and $\Omega_s$ can be set between 0 and 1. 


\subsubsection{Coupling of the perturbed Poisson Boltzmann and surface evolution equations}\label{section 6.3.3}

In principle, the surface evolution equation needs to be
solved simultaneously with the perturbed PB equation until the solution process reaches a self-consistency. To speed up the whole iterative process, electrostatic potential $\psi $ is updated after a number of time steps (i.e., 10 to 100 steps) evolution of  the parabolic surface equation \cite{ZhanChen:2010a}.

Moreover, a simple relaxation algorithm is adopted to guarantee the convergence of the iterative process as follows \cite{ZhanChen:2010a} : 
\begin{align}
\label{relax}
	u=\alpha u_{new}+(1-\alpha)u_{old}, & \quad 0<\alpha<1,\\
	\label{eqn:relax2}
	\psi=\alpha' \psi_{\rm new}+(1-\alpha')\psi_{\rm old}, &\quad 0<\alpha'<1,
\end{align}
where $u_{new}$ and $u_{old}$ are the new and old $u$ profile values from current and previous steps, respectively. $\psi_{\rm old}$ and $\psi_{\rm new}$ denote previous and new  electrostatic potentials, respectively.  $\alpha=0.5$ and $\alpha'=0.5$  are set in our calculation. 

In addition, a simple cutoff strategy is conducted to apply   Constraint (\ref{constrain 1}) and to  avoid possible numerical errors:  
\begin{equation}\label{cutoff}
	u = 
	\begin{cases}
		u(x) \quad & u\in [0,1]\\
		0     & u<0\\
		1     & u>1.
	\end{cases}   
\end{equation} 
The cutoff checkup is carried out every time step or several steps during the solution of surface evolution equation.
 
Finally, to reduces the total iteration number and
save the computational time significantly, first of all, one may start the iterative process with an initial $u$ from solving Eq. (\ref{see}) without the electrostatic potential term. 
Second, one may take the prior potential $\psi$ as a good guess for the next resulted linear system in the PPB solution. That will make the PPB solver converge faster.


\subsubsection{Parameterization}\label{section 6.3.4 parameterization}

There are some parameter values that need to be determined for real numerical simulations of solvation free energy. They include solvent density $\rho_s$, the solvent radius  $\sigma_s$,$\gamma$,  {$P_h$} and so on. Since most of the parameters are involved in nonpolar solvation energy,  a previous simple parameter fitting strategy is adopted here \cite{ZhanChen:2016a, Shao2022constrained}. In particular, on the one side, some  parameter values are fixed or given such as: $\rho_s$=0.03341/\AA$^3$; solvent radius $\sigma_s$ =0.65 \AA~; radii of solute atoms like $\sigma_c$ =1.87 \AA~. On the other side, some are considered as fitting parameters like $\gamma$,  {$P_h$}, and well depth parameters $\epsilon_{is}$ where $i$ denotes different atom types. The following iterative procedure is used to obtain the optimal fitting parameter values: 

Step 0: An initial guess of fitting parameters and a trial set of molecules are determined with their existing informaion such as atomic coordinates, radii, and experimental data of solvation free energies. 

Step 1: For individual $j$-th molecule, $j=1,\cdots N_m$ where $N_m$ is the total number of molecules in the trial set, the coupled system \eqref{EL-sys} is solved until self-consistency is reached to find the quasi-steady state solution of $u_j$ and $\psi_j$ with latest parameter values. Note that if the trial set is nonpolar, one only needs to solve the surface evolution equation without a driven potential from the electrostatic field. Then the fitting process is exactly the same as our previous paper \cite{Shao2022constrained}. 

Step 2: Electrostatic solvation energy $I_{p,q_k}^{j} $ is calculated for each molecules using the profile of $\psi_j$.

Step 3: A non-negative least squares algorithm is used to update all non-negative  parameters  {$P_h $}, $\gamma$, and $\epsilon_{is}$ with a minimization problem
$$
	T = \underset{(p,\gamma,\epsilon_{is})}{\min}\sum\limits_{j=1}^{N_m}\left( I_{np,q_k}^j+I_{p,q_k}^j-I_{q_k}^{j,\rm exp}
	\right)^2,
	$$
where $I_{q_k}^{j,\rm exp}$ is the existing experimental data of solvation free energies in the literature.  

Step 4: The iterative loop from Step 1 to Step 3 is repeated until all fitting parameters converge to a certain set of values within a pre-set tolerance. 
	
	
\subsection{Simulation Results}

	In this section, both nonpolar and polar molecules are taken for the numerical simulation and model validation. Nonpolar molecules are simulated first to justify the usage of $u^p$ which represents the volume ratio of solute. That may minimize modeling uncertainties from solvent-solute electrostatic interactions. It is followed by the calculation of polar molecules to demonstrate the potential of current proposed model for the prediction of polar solvation energies. 
	
\subsubsection{Nonpolar molecules}	

	To validate the current constrained variational model, we start with a set of 11 alkanes as a calibration set for numerical implementation of model solution and the associated parameterizaton process. First of all, two parameters  $N$ and $q_k $ need  to be pre-determined for each simulation. It turns out that optimal fitting parameters are uniquely computed for  a set of arbitrary $N> 1$ and $q_k$, where $p=\frac{2N}{2N-1}$, and $q_k \to 1^+$. For instance, when $N=40$ and $q_k=1.00001$, the calculated optimal fitting parameters are the following: 	$\gamma=0.0746 $ kcal/(mol \AA$^2$), {$P_h=0.0090$} kcal/(mol \AA$^3$) and $ \epsilon_{cs}=0.486$ kcal/mol, and $\epsilon_{hs}=0.00$ kcal/mol. 
Note that $\epsilon_{hs}$ and $\epsilon_{cs}$ are well depth parameters of the hydrogen and carbon, respectively.
Moreover, it is shown that the current model is able to reproduce the total solvation free energies of 11 alkanes very well (see Table \ref{tablealkane11}). The root mean square (RMS) error of 11 alkenes is  0.109 kcal/mol. For the nonpolar solvation free energy, the repulsive and attractive parts of solvation free energy are also calculated for detailed comparisons with others in the literature. Note that the first two terms of \eqref{nonpolar energy} are considered as the repulsive part of solvation free energy. 
		
	\begin{table}[hbt!]
		\caption{Computed total solvation free energies of the trial set of 11 alkane compounds and their repulsive and attractive decomposition when $q_k=1.00001$. 
			$\gamma=0.0746 $ kcal/(mol \AA$^2$), {$P_h=0.0090$} kcal/(mol \AA$^3$) and $ \epsilon_{cs}=0.486$ kcal/mol, and 
			$\epsilon_{hs}=0.00$ kcal/mol
		}
		\begin{center}
			\begin{tabular}{|c|cccc|}
				\hline\hline Compound & Rep. part & Att. part & Numerical &
				Experimental \cite{Cabani:1981}
				\\\cline{2-5}
				&\multicolumn{4}{c|}{(kcal/mol)}\\
				\hline
				methane	&4.21	&-2.21	&2.00	& 2.00\\
				ethane	&5.90&-3.95	&1.95	&	1.83\\
				propane	&9.00	&-6.89	&2.12	&	1.96\\
				butane	&7.45 &-5.42	&2.03	&	2.08\\
				pentane	&10.58	&-8.27	&2.30	&	2.33\\
				hexane	&12.13	&-9.75	&2.38	&	2.49\\
				isobutane	&8.90	&-6.64	&2.26	&	2.52\\
				2-methylbutane	&10.20	&-7.80	&2.40	&	2.38\\
				neopentane	&10.21 &-7.61	&2.60	&	2.50\\
				cyclopentane	&9.21	&-8.04	&1.17	&	1.20\\
				cyclohexane	&10.45	&-9.08	&1.37	&	1.23\\
				RMS of calibration set      &&&0.109&\\
				\hline\hline
			\end{tabular}
		\end{center}
		\label{tablealkane11}
	\end{table}			
	
    Next, it is interesting to see whether the model parameter $N$ or equivalently $p=\frac{2N}{2N-1}$, which is introduced in the volume ratio of solute $u^p$,  plays an important role in the solvation free energy calculation and prediction.  For this purpose, different $N$ values are chosen for the set of 11 alkanes while fixing all other simulation setting. It is evident that almost identical simulation results are obtained for large enough $N$  (See Table \ref{various_q}). 
   	\begin{table}[hbt!]
		\caption{Different optimized parameters and RMS errors for various $N$ values
	 when $q_k=1.00001$	}
		\begin{center}
			\begin{tabular}{|c|cccc|}
				\hline\hline { $q$} value & $\gamma$ (kcal/(mol \AA$^2$)) & $P_h$ (kcal/(mol \AA$^3$)) & $ \epsilon_{cs}$ (kcal/mol) &
				RMS (kcal/mol)\\ \hline
				1	&0.0758	&0.0078	&0.493	& 0.105\\
				2	&0.0749	&0.0085 & 0.487	&	0.108\\
				5	&0.0746 &0.009	&0.486 &	0.109 \\
				10  &0.0746 &0.009	&0.486 &	0.109 \\
				20 &0.0746 &0.009	&0.486 &	0.109 \\
				40	&0.0746 &0.009	&0.486 &	0.109 \\
				\hline\hline
			\end{tabular}
		\end{center}
		\label{various_q}
	\end{table}					
		
	Moreover,  with $q=1.00001$ and $N=40$, a predictive study is conducted for a set of 11 alkene compounds which was also used before \cite{Ratkova:2010, ZhanChen:2016a, Shao2022constrained}. The assumed similar solvent environment allows one to apply {the} above-obtained optimized parameters of 11 alkanes here because of the fact that both nonpolar sets only possess two types of atoms (C and H). It turns out that the numerical prediction of the current model matches the experimental data well as shown in Table \ref{tablealkene11}. The RMS error of 11 alkenes is 0.21 kcal/mol.

	\begin{table}[hbt!]
		\caption{Computed total solvation free energies of 11 alkene compounds { when $q=1.00001$} and $N=40$.
		}
		\begin{center}
			\begin{tabular}{|c|cccc|}
				\hline\hline Compound & Rep. part & Att. part & Numerical &
				Experimental \cite{Ratkova:2010}
				\\\cline{2-5}
				&\multicolumn{4}{c|}{(kcal/mol)}\\
				\hline
				3-methyl-1- butene	& 10.15	&-8.32	&1.84	&1.82\\	
				1-butene	&8.68 	&-7.04	&1.64	&            1.38\\
				ethene	  &5.51	&-4.12	  &1.49 &              1.27\\	
				1-heptene	&13.42	&-11.58	&1.84	&          1.66\\	
				1-hexene	&11.83	&-10.05 &1.78	&          1.68\\
				1-nonene	&16.64	&-14.59	&1.95	&          2.06\\	
				2-methyl-2-butene	&10.08	&-8.33	&1.74	&  1.31\\	
				1-octene	&14.99	&-13.01	&1.98	&          2.17\\	
				1-pentene	&10.22	&-8.58	&1.65	&          1.66 \\	
				1-propene	&7.12   &-5.59	&1.53	&            1.27\\	
				trans-2-heptene	&13.45	&-11.62	&1.83	&    1.66\\	
				RMS of prediction set   & & & 0.209& \\
				
				\hline\hline
			\end{tabular}
		\end{center}
		\label{tablealkene11}
	\end{table}	
	

	Furthermore, we have theoretically proved that total solvation energies converge to the case of $q_k=1$ when $q_k\to 1^+$. Numerically, the convergence can be demonstrated as follow: choosing a set of molecules like the above alkene compounds and fixing all other numerical settings, one allows the value of $q_k$ to approach 1 by creating a sequence of $q_k$ ($q_k=1.01, 1.001, 1.0001, 1.00001, 1.000001$). Then the total solvation free energy of each molecule is computed.  Table \ref{convergencetable} illustrates the convergence of total solvation free energies for all eleven alkenes.  
		
	\begin{table}[hbt!]
		\caption{Convergence of total solvation free energies {of} of eleven alkene molecules when  $q \to 1^+$ with other  parameter values fixed. 
		}
		\begin{center}
			\begin{tabular}{|c|c|c|c|c|c|}
				\hline\hline Compound & 1.01 &  1.001& 1.0001 &1.00001 & 1.000001
				\\\cline{2-6}
				&\multicolumn{5}{c|}{(kcal/mol)}\\
				\hline
				3-methyl-1- butene	 &2.567 	&1.908   &1.844 & 1.837 &1.837 \\	
				1-butene	&2.268 	&1.701	&1.647	& 1.641  &1.641\\
				ethene	  &1.888	&1.524	  &1.489 &  1.485&1.485 \\	
				1-heptene	&2.797	&1.930	&1.846	&1.837   & 1.837\\	
				1-hexene	&2.625  &1.857 &1.784	& 1.776  & 1.775\\
				1-nonene	&3.126	&2.060	&1.957	& 1.946  & 1.946 \\	
				2-methyl-2-butene	&2.468	&1.751	&1.744	&  1.745 &1.745 \\	
				1-octene	&3.049	&2.083	&1.990	& 1.980 &  1.980\\	
				1-pentene	&2.381	&1.716	&1.653	&  1.646  & 1.645 \\	
				1-propene	&2.043   &1.575	&1.530	&   1.525   & 1.525\\	
				trans-2-heptene	&2.789	&1.918	&1.835	& 1.826 & 1.826\\	
				\hline\hline
			\end{tabular}
		\end{center}
		\label{convergencetable}
	\end{table}	
	
	Remark that regarding the numerical calculation of solvation free  energy for nonpolar molecules, the currently computed results are almost the same as the previous constrained solvation model \cite{Shao2022constrained} when $N$ is large enough. The similarity can be explained by the fact that $pu^{p-1} \to 1 $ for $0<u<1$ when $p=\frac{2N}{2N-1} \to 1 $ with $ N \to \infty$.
	 
\subsubsection{Polar molecules}
The introduction of $u^p$ as solute volume ratio enables us to derive the system~\eqref{EL-sys}  from proposed constrained total  solvation   energy model~\eqref{non penalized total energy functional final form}. 
It has been a theoretical advance from our previous constrained model in which a PDE  was derived only for nonpolar energy functional due to the complex two-obstacle problem \cite{Shao2022constrained}. 

In this section, the model potential and validation are demonstrated numerically for polar molecules. To the end, a challenging set of 17 compounds is chosen. The challenge arises partially due to strong solvent-solute interactions caused by polyfunctional or interacting polar groups. Actually, its challenge can be seen quantitatively. For instance, using an explicit solvent model, Nicholls et al. obtained the root mean square error (RMS) as  $1.71 \pm 0.05 $ kcal/mol  via \cite{Nicholls:2008}. With an improved multiscale model equipped with self-consistent quantum charge density by Chen et al \cite{ZhanChen:2011b}, RMS was still around 1.50 kcal/mol.

For the current simulation, the structure data of the set of 17 molecules is taken from the supporting information of the paper of Nicholls et al \cite{Nicholls:2008} as we did before. The dielectric constants are slightly adjusted.
In the solute region  $\epsilon_m\approx 1$, while $\epsilon_s \leq 80$ for the solvent region.  
For this 17 set, different well-depth parameters $\epsilon_{is}$ need to be optimized  based on the above-described simple parameterization scheme.  
It is shown that the computed solvation free energy is quite comparable with the experimental data. 
The root mean square error can be improved to 1.107 kcal/mol (See table \ref{table17set}) when $\epsilon_m=1.15$ and $\epsilon_s=70$. 
In addition, it is found that almost identical simulation results are obtained for large enough $N$. 
In other words, model parameter value $N$ does not play an important role for the solvation energy prediction while it obviously benefits the theoretical derivation and the proof for current constrained variational model.  
The minor effect of different $N$ values can be found in Table \ref{nconvergence_polar}.

\begin{table}[hbt!]
	\caption{Comparison of total free energies (kcal/mol) for 17 compounds}\label{table17set}
	\begin{center}
		\begin{tabular}{cccc}
			\hline \hline Compound &               $\Delta G$    & Exptl  & Error  \\
			\hline
			glycerol triacetate          &-10.10       & -8.84  & -1.26 \\
			benzyl bromide               &-2.38        & -2.38  & 0.00   \\
			benzyl chloride              &-3.95        & -1.93  & -2.02 \\
			m-bis(trifluoromethyl)benzene         &1.07         &  1.07  & 0.00  \\
			N,N-dimethyl-p-methoxybenzamide     &-8.74        & -11.01 &  2.27    \\
			N,N-4-trimethylbenzamide  &-8.60        & -9.76  &  1.16   \\
			bis-2-chloroethyl ether   &-3.26        & -4.23  &  0.97    \\
			1,1-diacetoxyethane       &-5.49        & -4.97   & -0.52  \\
			1,1-diethoxyethane        &-4.51       & -3.28  &  -1.23 \\
			1,4-dioxane               &-4.84        & -5.05  &  0.21 \\
			diethyl propanedioate    &-5.10        & -6.00  & -0.90    \\
			dimethoxymethane          &-1.28        & -2.93   & 1.65  \\
			ethylene glycol diacetate &-6.48       & -6.34  &  -0.14  \\
			1,2-diethoxyethane        &-4.64       & -3.54  &  -1.10   \\
			diethyl sulfide           &-1.43        & -1.43  &  0.00 \\
			phenyl formate            &-4.35       & -4.08  & -0.27 \\
			imidazole                 &-10.83      & -9.81  & -1.02\\
				RMS of 17 polar molecules & & 1.107& \\
			\\ \hline\hline
		\end{tabular}
	\end{center}
\end{table}

	\begin{table}[hbt!]
	\caption{Some optimized parameters and RMS errors from various $N$ values
		when $q_k=1.00001$	}
	\begin{center}
		\begin{tabular}{|c|cccc|}
			\hline\hline { $q$} value & $\gamma$ (kcal/(mol \AA$^2$)) & $P_h$ (kcal/(mol \AA$^3$)) & $ \epsilon_{cs}$ (kcal/mol) &
			RMS (kcal/mol)\\ \hline
			4	&0.314 &0.000	&1.105 &	1.107\\
			8 	&0.314 &0.000	&1.105 &	1.107\\
			16 	&0.314 &0.000	&1.105 &	1.107\\
			32		&0.314 &0.000	&1.105 & 1.107\\
			\hline\hline
		\end{tabular}
	\end{center}
	\label{nconvergence_polar}
\end{table}					


\section{Conclusions}\label{Section 7}
Variational implicit solvation models (VISM) with diffuse solvent-solute interface definition have been considered as a successful approach to compute the disposition of an interface separating the solute and the solvent. It has been shown numerically that variational diffuse-interface solvation models can significantly improve the accuracy and efficiency of solvation energy computation. However, there are several open questions concerning those models at a theoretic level. In particular, 
all existing VISMs in literature lack the uniqueness of an energy minimizing solute-solvent interface and thus prevent us from studying many important properties of   the  interface profile.

Therefore, by introducing a new volume ratio function $u^p$, in this work, we have developed a novel constrained VISM based on a promising previously-proposed total variation based model (TVBVISM). Existence, uniqueness and regularity of the energy minimizing solute-solvent interface have been studied. 
Moreover, with the assistance of the precise depiction of   the  interface profile, this work provides a partial answer to the question  why the   solvation free energy is not minimized by a sharp solute-solvent interface. It turns out that when the mean curvature of $\Sigma_0$ is positive at some point, the energy minimizing state is never achieved by a sharp interface. 

In addition,  for the variational analysis of the new model and for the numerical computation of the solvation energy, a novel  approach has been proposed to overcome the essential difficulty generated by the involved constraints in the model. Specifically, the variational formulas of the new   energy functional can be rigorously derived via the introduction of the new volume ratio function $u^p$ together with an approximation technique by a sequence of $q$-energy type functionals. This is another advance from our previous work in which only the numerical study of nonpolar energy can be conducted for a constrained VISM. Model validation and numerical implementation have been demonstrated by using several common biomolecular modeling tasks. Numerical simulations show that the solvation energies calculated from our new model match the experimental data very well.

For the future work, we will provide a complete proof for the continuous dependence of   the solvation free energy      on the surfaces $\Omega_m$ and $\Omega_s$ in a suitable topology. 
Numerically, based on the derived   elliptic  system, we  intend to further improve the accuracy and efficiency of the solvation energy prediction via refined parameterization schemes.
Moreover, analysis of the current and potential numerical schemes like convergence will be a topic for future study.


\appendix

\section{BV-functions}\label{Appendix A}

In Appendix~\ref{Appendix A}, we will introduce some notations and preliminaries of $BV-$functions. The main reference is \cite{MR1158660, MR1857292}.
Let $\Omega\subset \R^N$ be open. 

\begin{definition}
The space of  functions of bounded variations on $\Omega$, denoted by $BV(\Omega)$, is the collections of all $L^1(\Omega)-$functions whose gradient $D f$ in the sense of distributions is a
(vector-valued) Radon measure with finite total variation in $\Omega$.
The total variation of $f$ in $\Omega$ is defined by
$$
\sup\left\{ \int_\Omega  f\div z \, dx: \, z\in C_0^\infty(\Omega;\R^N), \, \|z\|_\infty \leq 1 \right\}
$$
and is denoted by $\|Df\|(\Omega)$ or $\int_\Omega\, d|Df|$. 
$BV(\Omega)$ is a Banach space endowed with the norm
$$
\|f\|_{BV}:=\|f\|_1 + \|D f \| (\Omega).
$$
\end{definition}

By the structure theorem of $BV-$functions, for every $f\in BV(\Omega)$, there exist  Radon measure $\mu$ and  a $\mu-$measurable function $\sigma:\Omega\to \R^N$ such that
\begin{itemize}
\item $|\sigma(x)|=1$ a.e. and
\item $\int_\Omega f \div z \, dx=-\int_\Omega z\cdot \sigma \, d\mu$ for all $z\in C_0^\infty(\Omega;\R^N)$.  
\end{itemize}
We write $|Df|$ for the measure $\mu$.

Sobolev embedding  also holds for functions of bounded variations:
\begin{equation}
\label{Sobolev embedding}
BV(\Omega) \hookrightarrow L^p(\Omega) ,\quad \text{for all } 1\leq p \leq 1^*=\frac{N}{N-1}.
\end{equation}
The embedding is compact when $1\leq p <1^*$.

\medskip
	
\begin{prop}\label{A1: compact of BV}
Let $\Omega$ be bounded and with Lipschitz boundary. Assume that $\{f_n\}_{n=1}^\infty \subset BV(\Omega)$ satisfies
$$
\sup\limits_n\|f_n\|_{BV} <\infty.
$$
Then there exists a subsequence, not relabelled, such that
$$
f_n\to f \quad \text{in } L^1(\Omega) \quad \text{for some }f\in BV(\Omega).
$$
\end{prop}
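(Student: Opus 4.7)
The plan is to prove this classical BV-compactness result via a mollification and diagonal argument, mirroring the standard Rellich–Kondrachov approach but with the $L^1$-version of the approximation estimate tailored to BV. First, I would invoke the BV-extension theorem for Lipschitz domains: there is a bounded linear operator $E: BV(\Omega) \to BV(\R^N)$ such that $Ef$ vanishes outside a fixed bounded open set $\Omega' \supset\supset \overline{\Omega}$ and $\|Ef\|_{BV(\R^N)} \le C\|f\|_{BV(\Omega)}$. Applied to our sequence, this produces $\{Ef_n\} \subset BV(\R^N)$ uniformly bounded in $BV$-norm with uniformly compact support.

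Next, fix a standard family of mollifiers $\eta_\epsilon$ and set $f_n^\epsilon := \eta_\epsilon * Ef_n \in C^\infty(\R^N)$. Two estimates drive the argument. The first is the uniform rate of $L^1$-approximation,
\begin{equation*}
\|f_n^\epsilon - Ef_n\|_{L^1(\R^N)} \le C\,\epsilon\, \|D(Ef_n)\|(\R^N) \le C'\epsilon,
\end{equation*}
proved by duality against $C^1_0$ test functions (integration by parts and the definition of total variation), with constants independent of $n$; this is the only place where the BV structure, as opposed to mere $L^1$-boundedness, is essential. The second is that for each fixed $\epsilon > 0$, $\{f_n^\epsilon\}_n$ is uniformly bounded and equicontinuous on $\overline{\Omega}$, since $\|f_n^\epsilon\|_\infty$ and $\|\nabla f_n^\epsilon\|_\infty$ are controlled by $\|\eta_\epsilon\|_\infty$, $\|\nabla\eta_\epsilon\|_\infty$ and $\|Ef_n\|_{L^1(\R^N)}$.

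I would then fix a sequence $\epsilon_k \searrow 0$ and extract, by Arzelà–Ascoli at each level and a diagonal procedure, a single subsequence (not relabelled) such that $\{f_n^{\epsilon_k}\}_n$ converges uniformly on $\overline{\Omega}$, hence in $L^1(\Omega)$, for every $k$. The triangle inequality
\begin{equation*}
\|f_n - f_m\|_{L^1(\Omega)} \le \|f_n - f_n^{\epsilon_k}\|_{L^1(\Omega)} + \|f_n^{\epsilon_k} - f_m^{\epsilon_k}\|_{L^1(\Omega)} + \|f_m^{\epsilon_k} - f_m\|_{L^1(\Omega)},
\end{equation*}
combined with the first estimate (to render the outer terms smaller than $2C'\epsilon_k$ uniformly in $n,m$) and the $L^1$-convergence of the middle term for each fixed $k$, shows that $\{f_n\}$ is Cauchy in $L^1(\Omega)$. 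Denote its limit by $f$. Finally, the lower semicontinuity of total variation (Proposition~\ref{A1: lsc of BV}) applied to the $L^1$-convergent sequence gives $\|Df\|(\Omega) \le \liminf_n \|Df_n\|(\Omega) < \infty$, so $f \in BV(\Omega)$.

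The main obstacle is invoking the BV-extension theorem cleanly, which requires the Lipschitz regularity of $\partial\Omega$; this is a classical construction (Gagliardo-type reflection adapted to BV, see e.g.\ Ambrosio–Fusco–Pallara or Giusti). Everything else reduces to routine mollification estimates and a diagonal extraction, so the only substantive ingredient beyond standard functional-analytic machinery is the uniform BV-bound converting to a uniform $L^1$-rate of approximation.
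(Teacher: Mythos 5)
Your argument is correct: the extension to $BV(\R^N)$, the uniform $L^1$-mollification rate $\|\eta_\epsilon * Ef_n - Ef_n\|_{L^1}\le C\epsilon\|D(Ef_n)\|(\R^N)$ obtained by duality, the Arzel\`a--Ascoli plus diagonal extraction, and the final appeal to lower semicontinuity of the total variation all go through. The paper states this proposition without proof as a classical preliminary (Appendix~\ref{Appendix A}, citing the standard BV references), and what you have written is precisely the standard proof found there, so there is nothing to compare beyond noting that your reconstruction is faithful and complete.
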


\medskip

\begin{prop}\label{A1: lsc of BV}
Suppose that $\{f_n\}_{n=1}^\infty \subset BV(\Omega)$ and $f_n\to f$ in $L^1_{loc}(\Omega)$. Then
$$
\|Df\|(\Omega)\leq \liminf\limits_{n\to \infty}\|Df_n\|(\Omega).
$$
\end{prop}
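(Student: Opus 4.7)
The plan is to use the duality characterization of total variation combined with the fact that test functions in $C_0^\infty(\Omega;\R^N)$ have compact support, which pairs naturally with $L^1_{\rm loc}$ convergence. This is the classical lower semicontinuity argument for BV; no compactness or passage to a subsequence is needed.

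First, I would unfold the definition: for any $z \in C_0^\infty(\Omega;\R^N)$ with $\|z\|_\infty \leq 1$, the total variation satisfies
\begin{equation*}
\int_\Omega f \div z \, dx \leq \|Df\|(\Omega),
\end{equation*}
and the supremum of the left-hand side over all such $z$ equals $\|Df\|(\Omega)$. So it is enough to show that for each fixed admissible $z$,
\begin{equation*}
\int_\Omega f \div z \, dx \leq \liminf_{n\to\infty}\|Df_n\|(\Omega).
\end{equation*}

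Next, I would exploit the compactly supported nature of $z$. Let $K := \supp(z) \subset\subset \Omega$. Since $f_n \to f$ in $L^1_{\rm loc}(\Omega)$, in particular $f_n \to f$ in $L^1(K)$. Because $\div z$ is bounded (indeed smooth and compactly supported), H\"older's inequality gives
\begin{equation*}
\Bigl|\int_\Omega (f_n - f) \div z \, dx\Bigr| = \Bigl|\int_K (f_n - f)\div z \, dx\Bigr| \leq \|\div z\|_\infty \|f_n - f\|_{L^1(K)} \to 0.
\end{equation*}
On the other hand, applying the definition of total variation to each $f_n$ with the same admissible $z$,
\begin{equation*}
\int_\Omega f_n \div z \, dx \leq \|Df_n\|(\Omega).
\end{equation*}
Combining these two facts and taking $\liminf_{n\to\infty}$ yields $\int_\Omega f \div z \, dx \leq \liminf_{n\to\infty}\|Df_n\|(\Omega)$, and then taking supremum over all admissible $z$ gives the claim.

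There is no genuine obstacle here; the only subtle point worth flagging is that one cannot directly use weak-$*$ convergence of the gradient measures $Df_n \rightharpoonup^* Df$ because the hypothesis is only $L^1_{\rm loc}$ convergence and the $\|Df_n\|(\Omega)$ need not even be bounded a priori. The chosen route sidesteps this entirely by using only test functions with compact support, which is exactly what the $L^1_{\rm loc}$ convergence handles. If $\liminf_{n\to\infty}\|Df_n\|(\Omega) = +\infty$ the statement is trivial, so one may tacitly assume this liminf is finite throughout.
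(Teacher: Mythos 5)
Your proof is correct and is precisely the classical duality argument for lower semicontinuity of the total variation (as in Evans--Gariepy or Ambrosio--Fusco--Pallara); the paper states this proposition without proof as a standard fact from those references. The one subtlety you flag --- that compactly supported test fields are exactly what makes $L^1_{\mathrm{loc}}$ convergence sufficient, with no a priori bound on $\|Df_n\|(\Omega)$ needed --- is handled properly.
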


An Lebesgue measurable set $E \subset \R^N$ is said to have {\em finite perimeter} in $\Omega$ if
$$
\chi_E \in BV(\Omega).
$$
$\Per(E;\Omega):=\|D\chi_E \|(\Omega)$ is called the {\em perimeter} of $E$ in $\Omega$.

\medskip

\begin{definition}
Let $E$ be of finite perimeter in $\Omega$. We call the {\em reduced boundary} $\partial E^*$ the collection of all points $x\in {\rm supp}|D \chi_E|\cap \Omega$ such that the limit
$$
\nu_E(x):=-\lim\limits_{r\to 0^+ } \frac{D \chi_E (B(x,r)) }{ \|D \chi_E\| (B(x,r)) }
$$
exists in $\R^N$ and satisfies $|\nu_E|(x)=1$ a.e.. 
The function $\nu_E: \partial E^* \to \mathbb{S}^{N-1}$ is called the generalized {\em outer normal} to $E$.
$\partial E \setminus \partial E^*$ is called the {\em singular set} of $E$.
In particular, we have  
\begin{equation}\label{red bdry}
\Per(E;\R^N\setminus \partial E^*)=0.
\end{equation}
\end{definition}

\medskip

\begin{prop}\label{A1: trace of BV}
Let $\Omega$ be bounded and with Lipschitz boundary.  
There is a bounded linear map
$$
\Tr: BV(\Omega)\to L^1(\partial \Omega)
$$
such that
$$
\int_\Omega f \div \phi\, dx=- \int_\Omega \phi\cdot Df + \int_{\partial\Omega}  (\phi\cdot \nu) \Tr f\, d\cH^{N-1},
$$
where $\nu$ is the outer unit normal on $\partial\Omega$. 
It is understood that the measure on $\partial\Omega$ is $\cH^{N-1}$.
The function $\Tr f$, which is uniquely defined  $\cH^{N-1}$ a.e. on $\partial \Omega$, is called the {\em trace} of $f$ on $\partial \Omega$.
\end{prop}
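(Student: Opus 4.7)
The plan is to define the trace via approximation by smooth functions combined with a localized trace inequality. Since $\Omega$ is bounded Lipschitz, the classical Anzellotti--Giaquinta approximation supplies, for each $f\in BV(\Omega)$, a sequence $\{f_n\}\subset C^\infty(\overline{\Omega})$ such that $f_n\to f$ in $L^1(\Omega)$ and $\|\nabla f_n\|_{L^1(\Omega)}\to \|Df\|(\Omega)$ (strict convergence). I would then define $\Tr f$ as the $L^1(\partial\Omega)$ limit of the classical traces $f_n|_{\partial\Omega}$, so the proof reduces to verifying that this limit exists in $L^1(\partial\Omega)$ and is independent of the approximating sequence.

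The key estimate is the trace inequality for smooth functions,
$$
\|g\|_{L^1(\partial\Omega)}\le C\bigl(\|g\|_{L^1(\Omega)}+\|\nabla g\|_{L^1(\Omega)}\bigr),\quad g\in C^1(\overline{\Omega}),
$$
which I would prove by fixing a finite atlas of bi-Lipschitz boundary charts that flatten $\partial\Omega$, employing a subordinate partition of unity, and applying the fundamental theorem of calculus along the straightened inward normal direction; uniform control of the chart Jacobians furnishes the constant $C$. To make the subsequent Cauchy argument work, I would sharpen this by inserting a cutoff $\zeta_\delta$ supported in a $\delta$-tubular neighbourhood $U_\delta$ of $\partial\Omega$ and equal to $1$ on $\partial\Omega$, which yields the localized version
$$
\|g\|_{L^1(\partial\Omega)}\le C\bigl(\delta^{-1}\|g\|_{L^1(U_\delta)}+\|\nabla g\|_{L^1(U_\delta)}\bigr),\quad g\in C^1(\overline{\Omega}).
$$

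With these inequalities in hand, the Cauchy property of $\{f_n|_{\partial\Omega}\}$ in $L^1(\partial\Omega)$ follows by fixing $\varepsilon>0$, choosing $\delta>0$ so small that $\|Df\|(\overline{U_\delta})<\varepsilon$ (possible because $|Df|$ is a finite Radon measure on $\Omega$), and then applying the localized inequality to $f_n-f_m$: strict convergence together with the lower semicontinuity of total variation on open sets gives $\limsup_n\|\nabla f_n\|_{L^1(U_\delta)}\le\|Df\|(\overline{U_\delta})<\varepsilon$, which combined with $L^1$-convergence of the $f_n$ themselves forces the right-hand side to be arbitrarily small for $n,m$ large. Boundedness of $\Tr$ and independence of the approximating sequence then follow by passing to the limit in the original unlocalized trace inequality.

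The integration-by-parts identity is obtained by taking $n\to\infty$ in the classical Gauss--Green formula for $f_n$: the term $\int_\Omega f_n\div\phi\,dx$ converges by $L^1$-convergence of $f_n$; the term $\int_\Omega \phi\cdot\nabla f_n\,dx$ converges to $\int_\Omega \phi\cdot Df$ via the weak-$\ast$ convergence of $\nabla f_n\,\cL^N$ to $Df$ as Radon measures (a consequence of strict convergence, obtainable by a Reshetnyak-type argument); and the boundary integral converges by the $L^1(\partial\Omega)$-convergence of $f_n|_{\partial\Omega}$ to $\Tr f$. Uniqueness of $\Tr f$ up to $\cH^{N-1}$-null sets then follows from the integration-by-parts identity itself and the fundamental lemma of the calculus of variations applied on $\partial\Omega$. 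The main obstacle is the localization step: strict convergence of total variations does \emph{not} imply strong $L^1$-convergence of gradients, so without the cutoff-based sharpening of the trace inequality one cannot close the Cauchy argument, and handling this is the technical heart of the proof.
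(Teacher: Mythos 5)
The paper does not actually prove Proposition~\ref{A1: trace of BV}: Appendix~\ref{Appendix A} quotes it as a standard fact about $BV$-functions with references to the textbook literature, so there is no in-paper argument to compare against. What you have written is, in substance, the standard textbook proof of the trace theorem on a bounded Lipschitz domain, and its architecture is sound: strict (Anzellotti--Giaquinta) approximation, a trace inequality for smooth functions obtained by flattening the boundary, a cutoff-localized version of that inequality, a Cauchy argument driven by the smallness of $\|Df\|$ on thin collars $\overline{U_\delta}\cap\Omega$ (your derivation of $\limsup_n\|\nabla f_n\|_{L^1(U_\delta)}\le\|Df\|(\overline{U_\delta}\cap\Omega)$ from lower semicontinuity on the complementary open set plus convergence of total masses is exactly right), and Reshetnyak-type continuity to pass to the limit in the Gauss--Green formula. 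Two small points deserve repair. First, you claim that independence of the approximating sequence ``follows by passing to the limit in the original unlocalized trace inequality''; it does not, for precisely the reason you yourself flag in your last paragraph: applying that inequality to $f_n-g_n$ leaves the term $\|\nabla f_n-\nabla g_n\|_{L^1(\Omega)}$, which strict convergence does not send to zero. The standard fix is to interleave the two sequences (the interleaved sequence is still strictly convergent, so your localized Cauchy argument applies to it and forces the two boundary limits to coincide), or to rerun the localized estimate directly on $f_n-g_m$. Second, the basic Anzellotti--Giaquinta construction yields $f_n\in C^\infty(\Omega)\cap W^{1,1}(\Omega)$ rather than $C^\infty(\overline{\Omega})$; on a Lipschitz domain this is harmless (either use the boundary-adapted mollification that does give smoothness up to $\overline{\Omega}$, or note that the classical $W^{1,1}$ trace already applies to the $f_n$), but it should be said. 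With those two adjustments the argument is complete and matches the proof given in the references the paper cites.
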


\medskip

\begin{prop}\label{A1: extension of BV}
Let $\Omega$ be bounded and   Lipschitz.  Assume that $f_1\in BV(\Omega)$ and $f_2\in BV(\R^N\setminus \overline{\Omega})$. Define
\begin{align*}
f(x)=
\begin{cases}
f_1(x) \quad & x\in \Omega\\
f_2(x) & x\in \R^N\setminus \overline{\Omega}.
\end{cases}
\end{align*}
Then $f\in BV(\R^N)$. Moreover,
$$
\|Df\|(\R^N)=\|Df_1\|(\Omega) + \|Df_2\|(\R^N\setminus \overline{\Omega}) + \int_{\partial\Omega} |\Tr f_1-\Tr f_2|\, d\cH^{N-1}.
$$
\end{prop}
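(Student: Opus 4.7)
The plan is to compute the distributional gradient of $f$ on $\R^N$ by applying the trace formula of Proposition~\ref{A1: trace of BV} separately to $f_1$ on $\Omega$ and to $f_2$ on $\R^N\setminus\overline{\Omega}$, then to read off the total variation from the resulting decomposition into three mutually singular measures.

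First, for any $\phi\in C_0^\infty(\R^N;\R^N)$ with $\supp \phi \subset B_R(0)$, I would split
$$
\int_{\R^N} f \,\div \phi\, dx = \int_\Omega f_1 \,\div \phi\, dx + \int_{B_R(0)\setminus\overline{\Omega}} f_2 \,\div \phi\, dx.
$$
The first piece is handled directly by Proposition~\ref{A1: trace of BV} applied to $\Omega$. For the second piece, note that $\R^N\setminus\overline{\Omega}$ is only locally Lipschitz, so I would apply Proposition~\ref{A1: trace of BV} to the bounded Lipschitz domain $U_R:=B_R(0)\setminus\overline{\Omega}$ (taking $R$ large enough that $\overline{\Omega}\subset B_R(0)$); since $\phi$ vanishes on $\partial B_R(0)$, only the boundary integral on $\partial\Omega$ survives. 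Crucially, the outer normal of $U_R$ on $\partial\Omega$ equals $-\nu_{\partial\Omega}$, so adding the two identities gives
$$
\int_{\R^N} f\,\div \phi\, dx = -\int_\Omega \phi\cdot Df_1 - \int_{\R^N\setminus\overline{\Omega}} \phi\cdot Df_2 + \int_{\partial\Omega} (\phi\cdot \nu_{\partial\Omega})(\Tr f_1 - \Tr f_2)\, d\cH^{N-1}.
$$
This already shows $f\in BV(\R^N)$ with
$$
Df = Df_1\llcorner\Omega + Df_2\llcorner(\R^N\setminus\overline{\Omega}) + (\Tr f_1 - \Tr f_2)\nu_{\partial\Omega}\,\cH^{N-1}\llcorner\partial\Omega,
$$
interpreted as an $\R^N$-valued Radon measure.

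The main obstacle — which is really the only substantive step — is to upgrade this decomposition of the vector measure $Df$ into the desired additivity of the \emph{total variation} $\|Df\|(\R^N)$. The three summands are supported on $\Omega$, on $\R^N\setminus\overline{\Omega}$, and on $\partial\Omega$, respectively, so they are mutually singular as soon as one verifies $|Df_1|(\partial\Omega)=0$ (trivial, as $|Df_1|$ is extended by zero outside $\Omega$) and the analogous statement for $|Df_2|$, together with $\cL^N(\partial\Omega)=0$ (true since $\Omega$ is Lipschitz). By the general fact that the total variation of a sum of mutually singular vector measures equals the sum of their total variations, and by the known identity $\|\nu_{\partial\Omega} g\,\cH^{N-1}\llcorner\partial\Omega\|=\int_{\partial\Omega}|g|\,d\cH^{N-1}$ (using $|\nu_{\partial\Omega}|=1$), the claimed equality follows.

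Finally, I would note two sanity checks. First, taking $\phi$ compactly supported inside $\Omega$ recovers $Df\llcorner\Omega = Df_1$, and analogously on the exterior, which confirms the decomposition is consistent with the original definitions of $Df_1$ and $Df_2$. Second, the formula is symmetric under the interchange $f_1 \leftrightarrow f_2$ because $|\Tr f_1 - \Tr f_2|$ is symmetric, as it should be since the sign of $\nu_{\partial\Omega}$ is a matter of convention. No further technicalities are expected beyond verifying that the trace theorem is applicable to the exterior piece through the device of intersecting with a large ball.
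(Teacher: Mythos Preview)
Your argument is correct and is exactly the standard textbook proof (see, e.g., Evans--Gariepy or Ambrosio--Fusco--Pallara). Note, however, that the paper does not supply its own proof of this proposition: it appears in Appendix~\ref{Appendix A} as a preliminary result quoted from the literature, so there is nothing to compare against beyond observing that your approach matches the classical one.
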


Given $f\in L^1_{loc}(\Omega)$, we say that $f$ has an {\em approximate limit} at $x\in \Omega$ if there exists $z\in \R$ such that
\begin{equation}
\label{A1: approx limit}
\lim\limits_{r\to 0^+} \frac{1}{|B(x,r)|} \int_{B(x,r)} | u(y)-z|\, dy=0.
\end{equation}
The set of points where this does not hold is called the {\em approximate discontinuity}
set of $f$, and it is denoted by $S_f$. By Lebesgue differentiation theorem, $\cL^N(S_f)=0$.
$z$ is uniquely determined via \eqref{A1: approx limit} and is denoted by $\tilde{f}(x)$.
$f$ is said to be {\em approximately continuous} at $x$ if $x\notin S_f$ and $f(x)=\tilde{f}(x)$.

We say $f\in L^1_{loc}(\Omega)$ has an {\em approximate jump point} at $x\in \Omega$ if there exist $a\neq b\in \R$ and $\mu\in \mathbb{S}^{N-1}$ such that $a\neq b$ and
$$
\lim\limits_{r\to 0^+} \frac{1}{|B(x,r)|} \int_{B^+_\nu(x,r)} | f(y)-a|\, dy=0 
\quad \text{and}\quad 
\lim\limits_{r\to 0^+} \frac{1}{|B(x,r)|} \int_{B^{-}_\nu(x,r)} | f(y)-b|\, dy=0.
$$
Here
\begin{align*}
\begin{cases}
B^+_\nu(x,r):=& \{y\in B(x,r): \,  \nu\cdot (y-x)>0\}\\
B^{-}_\nu(x,r):=& \{y\in B(x,r): \, \nu \cdot(y-x)<0\} .
\end{cases}
\end{align*}
The set of all approximate jump points of $f$ is denoted by $J_f$.
When $f\in BV(\Omega)$, $S_f$ is countably $\cH^{N-1}-$rectifiable and $J_f$ is a Borel subset of $S_f$. 
Further $\cH^{N-1}(S_f\setminus J_f)=0$.

If $f\in BV(\Omega)$, we define the super-level sets of $f$ by
$$
E_t:=\{f > t\}, \quad t\in \R.
$$
Then for $\cL^1-$a.a. $t$, $E_t$ is of finite perimeter and the function 
$$
[t\mapsto \Per(E_t;\Omega)]
$$ 
is $\cL^1-$measurable. Moreover, the {\em coarea formula} holds:
\begin{equation}
\label{coarea formula}
\int_\Omega v(x)d|Du| =\int_{-\infty}^\infty \int_\Omega v(x)d|D\chi_{E_t}| \, dt 
\end{equation} 
for all $|Du|-$integrable function $v:\Omega\to \R$.
In addition,
\begin{equation}
\label{jump set}
J_f=\bigcup\limits_{t_1,t_2\in \mathbb{Q} , \, t_1<t_2 } \partial E_{t_1} \cap \partial E_{t_2}.
\end{equation}

If $E\subset \R^N$ is measurable, we can define the upper and lower density of $E$ at $x$ by
$$
\overline{D}(E,x)= \limsup\limits_{r\to 0^+} \frac{|E\cap B(x,r)|}{|B(x,r)|} 
\quad \text{and}\quad 
\underline{D}(E,x)= \liminf\limits_{r\to 0^+} \frac{|E\cap B(x,r)|}{|B(x,r)|} ,
$$
respectively. If $u\in BV(\Omega)$, we define 
$$
u^*(x)=\inf\{s:\, \overline{D}(\{u\geq s\},x)=0\} \quad \text{and}\quad 
  u_*(x)=\sup\{s:\, \underline{D}(\{u\leq s\},x)=0\}.
$$
Then $u$ is approximately continuous at $x\in \Omega$ iff $u^*(x)=u_*(x)$.


\section{Tools from convex analysis}\label{Appendix B}

In Appendix~\ref{Appendix B}, we will state some useful tools from Convex Analysis. Interested readers may refer to the books \cite{MR1727362, MR1422252} for more details.

Let $X$ be a Banach space with norm $\|\cdot\|$. 
Throughout, we assume that $f: X\to \R\cup \{\pm \infty\}$ is convex and lower semicontinuous (l.s.c.) function. 
Its {\em effective domain} is defined by
is
$$
\dom(f)=\{u\in X:\, f(u)<+\infty \}.
$$
$f$ is said to be {\em proper} if it nowhere takes value $-\infty$ and is not identically equal to $+\infty$ on $X$.


Given any subset $U\subset X$, its {\em indicator function} $I_U$ is defined by
\begin{align}
\label{def: indicator function}
I_U(x)=
\begin{cases}
0 \quad & \text{when } x\in U\\
\infty & \text{when } x\in X\setminus U.
\end{cases}
\end{align}

We denote by $X^*$ the topological dual of $X$ and $\langle \cdot , \cdot \rangle$ the duality pairing. 
When $f$ is proper, the {\em subdifferential} of $f$ at $u\in \dom(f)$ is the set of all $u^*\in X^*$ such that
$$
\langle u^*, v-u \rangle \leq f(v)-f(u) ,\quad \forall v\in X,
$$
and is denoted by $\partial f(u)$. Each element of $\partial f(u)$ is called a {\em subdifferential} of $f$ at $u$. 
When $\partial f(u) \neq \emptyset$, we say that $f$ is {\em subdifferentiable} at $u$.

The relationship between subdifferentiability and G\^ateaux-differentiability is described by the following proposition.
\begin{prop}\label{Prop: Frechet}
Let $f:X\to \R\cup \{+\infty\}$ be convex and proper. If $f$ is G\^ateaux-differentiable at $u\in {\rm int}(\dom(f))$, then $\partial f(u)=f'(u)$, where $f'(u)$ is the G\^ateaux-derivative of $f$ at $u$.
\end{prop}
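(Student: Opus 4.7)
The plan is to establish the two inclusions $f'(u)\in \partial f(u)$ and $\partial f(u)\subseteq \{f'(u)\}$ separately, using only convexity and the definition of G\^ateaux-differentiability. Since $u\in \mathrm{int}(\dom(f))$, there is some $r>0$ such that $u+th\in \dom(f)$ for every $h\in X$ and every sufficiently small $t>0$, which is what lets us manipulate the difference quotients freely.

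For the first inclusion, I would fix an arbitrary $v\in X$ and exploit convexity in the standard way: for $t\in(0,1)$,
\begin{equation*}
f(u+t(v-u)) \le (1-t)f(u) + t f(v),
\end{equation*}
which rearranges to
\begin{equation*}
\frac{f(u+t(v-u)) - f(u)}{t} \le f(v) - f(u).
\end{equation*}
Sending $t\to 0^+$ and using G\^ateaux-differentiability at $u$ yields $\langle f'(u), v-u\rangle \le f(v)-f(u)$, which is precisely the subgradient inequality; hence $f'(u)\in \partial f(u)$.

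For the reverse inclusion, I would take any $u^*\in \partial f(u)$ and test the subgradient inequality against the perturbation $u+th$ for an arbitrary $h\in X$ and small $t>0$ (legitimate since $u$ lies in the interior of $\dom(f)$). This gives
\begin{equation*}
\langle u^*, h\rangle \le \frac{f(u+th)-f(u)}{t},
\end{equation*}
and passing $t\to 0^+$ produces $\langle u^*, h\rangle \le \langle f'(u), h\rangle$. Applying the same inequality with $h$ replaced by $-h$ yields the opposite inequality, so $\langle u^*-f'(u),h\rangle = 0$ for every $h\in X$, forcing $u^* = f'(u)$.

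The only delicate point is ensuring the difference quotients are defined and have the required one-sided limit; this is exactly what the hypothesis $u\in \mathrm{int}(\dom(f))$ buys us, and no further obstacle arises since convexity gives the monotonicity of the difference quotient automatically. Thus the proof is essentially a two-line argument once the setup is in place.
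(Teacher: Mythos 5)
Your argument is correct and complete: the first inclusion is the standard consequence of the monotonicity of convex difference quotients, and the second follows by testing the subgradient inequality along $\pm h$ and passing to the limit, with the interiority hypothesis guaranteeing the quotients are defined. The paper itself offers no proof of this proposition --- it is stated in Appendix~\ref{Appendix B} as a quoted fact from the convex-analysis literature --- so there is nothing to compare against; your write-up is exactly the standard textbook argument (the only cosmetic remark is that when $f(v)=+\infty$ the subgradient inequality holds trivially, so no case distinction is really needed there).
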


By the definition of the subdifferential, it is obvious that
$$
\partial f_1(v)+ \partial f_2(v) \subseteq \partial (f_1+f_2)(v).
$$
However, the converse is not always true. We list below several cases where the converse holds.
 
\begin{prop}\label{Prop: sum rule of subdifferential 1}
Suppose that $f_1,f_2: X \to \R\cup \{+\infty\}$ is convex and l.s.c. and $u\in \dom(F_1)\cap \dom(F_2)$.
If $f_2$ is continuous at $u$, then
$$
\partial f_1(v)+ \partial f_2(v) = \partial (f_1+f_2)(v) \quad \forall v\in X.
$$
\end{prop}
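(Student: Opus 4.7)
The easy direction, namely $\partial f_1(v) + \partial f_2(v) \subseteq \partial(f_1+f_2)(v)$, follows immediately by adding the two defining subgradient inequalities. The substance of the statement is the reverse inclusion, and my plan is to prove it by the standard Moreau--Rockafellar separation argument.

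First I would fix $v \in \dom(f_1) \cap \dom(f_2)$ (outside this set both sides are empty) and an arbitrary $w^* \in \partial(f_1+f_2)(v)$. After replacing $f_1$ by the function $f_1(\cdot) - \langle w^*, \cdot - v \rangle - f_1(v)$ and $f_2$ by $f_2(\cdot) - f_2(v)$, I may assume without loss of generality that $f_1(v) = f_2(v) = 0$ and that $v$ is a global minimizer of $f_1 + f_2$. The goal then reduces to producing $u_1^* \in \partial f_1(v)$ with $-u_1^* \in \partial f_2(v)$.

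Next I would introduce in $X \times \R$ the two convex sets
\begin{equation*}
C_1 = \{(x,t) \in X \times \R : f_1(x) \leq t\}, \qquad C_2 = \{(x,t) \in X \times \R : t \leq -f_2(x)\}.
\end{equation*}
The set $C_1$ is the epigraph of $f_1$ (hence convex, and with nonempty relative interior along the $t$-axis), while $C_2$ is convex with nonempty interior: the continuity of $f_2$ at the point $u$ in the hypothesis, together with convexity, gives a neighbourhood of $(u,-f_2(u)-1)$ lying in the interior of $C_2$. The minimizing property of $v$ implies $C_1 \cap \operatorname{int}(C_2) = \emptyset$, for if $(x,t)$ lay in the intersection we would have $f_1(x) \leq t < -f_2(x)$, hence $(f_1+f_2)(x) < 0 = (f_1+f_2)(v)$, a contradiction. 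The geometric Hahn--Banach theorem then produces a nonzero continuous linear functional $(\xi^*, \alpha) \in X^* \times \R$ and a constant $c$ separating $C_1$ from $\operatorname{int}(C_2)$.

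Finally I would extract the subgradients from this separating functional: sending $t \to +\infty$ in the inequality over $C_1$ forces $\alpha \leq 0$, and a standard argument (using that the interior of $C_2$ contains vertical downward rays near $u$) rules out $\alpha = 0$, so $\alpha < 0$ and we may normalize $\alpha = -1$. Rewriting the two separation inequalities in terms of $f_1$ and $f_2$ then yields precisely
\begin{equation*}
\langle \xi^*, x - v \rangle \leq f_1(x) \quad \text{and} \quad \langle -\xi^*, x - v \rangle \leq f_2(x)
\end{equation*}
for all $x \in X$, i.e.\ $\xi^* \in \partial f_1(v)$ and $-\xi^* \in \partial f_2(v)$. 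Undoing the reductions gives the required decomposition $w^* = u_1^* + u_2^*$. The main obstacle is the delicate step of verifying that the separating functional has a strictly nonzero $t$-component; this is exactly where the continuity hypothesis on $f_2$ (ensuring that $C_2$ has nonempty interior with a genuinely two-dimensional shape near the reference point) is indispensable.
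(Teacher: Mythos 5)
The paper offers no proof of this proposition at all: it is invoked as a standard fact of convex analysis (the Moreau--Rockafellar sum rule), with the reader referred to the monographs cited at the head of Appendix~B. Your separation argument is the classical textbook proof of that theorem and is essentially correct: the reduction to $f_1(v)=f_2(v)=0$ with $v$ a global minimizer of $f_1+f_2$, the disjointness of the epigraph of $f_1$ from the interior of the hypograph of $-f_2$, and the geometric Hahn--Banach theorem all go through, with continuity of $f_2$ at the single point $u$ supplying the nonempty interior that makes separation available in an infinite-dimensional space. The one step you leave too compressed is ruling out $\alpha=0$: the ``vertical downward rays'' are not what does the work there. What you actually need is that $u$ lies in $\dom(f_1)\cap\dom(f_2)$: if $\alpha=0$, the separation reads $\langle \xi^*,x\rangle\le c$ for all $x\in\dom(f_1)$ and $\langle \xi^*,x\rangle\ge c$ for all $x$ in a full neighbourhood of $u$ (the projection of the interior of $C_2$); since $u\in\dom(f_1)$, the functional $\langle \xi^*,\cdot\rangle$ attains a local minimum at the interior point $u$ of that neighbourhood, hence $\xi^*=0$, contradicting the nontriviality of $(\xi^*,\alpha)$. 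With that clarification the argument is complete, and it is in fact more self-contained than the paper's treatment, which simply cites the result.
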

 
\begin{prop}\label{Prop: sum rule of subdifferential 2}
Let $f, g : X \to \R\cup\{\infty\}$ be proper, l.s.c. and convex functions such that 
$$
\bigcup\limits_{\mu>0} \mu(\dom(f)-\dom(g)) \text{ is a closed linear subspace of } X,
$$
then
$$
\partial(f+g)(u)=\partial f(u)+ \partial g(u) \quad \forall u\in \dom(f)\cap \dom(g).
$$
\end{prop}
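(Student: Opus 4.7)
The forward inclusion $\partial f(u)+\partial g(u)\subseteq \partial(f+g)(u)$ is immediate from the definition of subdifferential: if $v^*\in \partial f(u)$ and $w^*\in \partial g(u)$, then adding the defining inequalities $\langle v^*,v-u\rangle\leq f(v)-f(u)$ and $\langle w^*,v-u\rangle\leq g(v)-g(u)$ shows that $v^*+w^*\in \partial(f+g)(u)$. So the entire content of the proposition is the reverse inclusion, and my plan is to deduce it from the duality identity
$$
(f+g)^*(u^*)=\min_{v^*\in X^*}\bigl\{f^*(u^*-v^*)+g^*(v^*)\bigr\},
$$
that is, the infimal convolution $f^*\square g^*$ equals $(f+g)^*$ pointwise and the infimum is attained. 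Once this is in hand, the splitting falls out from the Fenchel--Young equality characterization of the subdifferential.

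The first (and main) step is to establish the duality identity above under the hypothesis that $C:=\bigcup_{\mu>0}\mu(\dom f-\dom g)$ is a closed linear subspace of $X$. This is the Attouch--Brezis theorem. The plan is to view both sides as proper l.s.c.\ convex functions on $X^*$: the inequality $(f+g)^*\leq f^*\square g^*$ always holds, so the work is to prove the reverse inequality together with attainment. I would set $h:=f^*\square g^*$, check that $h$ is convex and its biconjugate equals $(f+g)^{**}=f+g$ (using properness and lower semicontinuity of both $f$ and $g$), and then appeal to a closed range theorem applied to the perturbation function
$$
\Phi(u,y):=f(u)+g(u-y),\qquad (u,y)\in X\times X,
$$
whose value function $v(y)=\inf_u \Phi(u,y)$ has epigraph related to $C$. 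The closedness of $C$ forces $v$ to be l.s.c.\ at $0$ with subdifferentials that are non-empty on $X^*$, which upgrades the weak duality $(f+g)^*\leq f^*\square g^*$ to equality with attained infimum. This is the technically hard part: the standard proof goes through the Robinson--Ursescu or open mapping type lemma applied to the multifunction $\mu\mapsto \mu(\dom f-\dom g)$, using Baire category on the Banach space $X$, to produce an interior point in the relevant set.

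With the duality identity established, the second step is short. Fix $u\in\dom(f)\cap \dom(g)$ and $u^*\in \partial(f+g)(u)$. By the Fenchel--Young equality,
$$
(f+g)(u)+(f+g)^*(u^*)=\langle u^*,u\rangle.
$$
By Step~1 there exist $v^*,w^*\in X^*$ with $u^*=v^*+w^*$ and $(f+g)^*(u^*)=f^*(v^*)+g^*(w^*)$. Substituting and regrouping,
$$
\bigl[f(u)+f^*(v^*)-\langle v^*,u\rangle\bigr]+\bigl[g(u)+g^*(w^*)-\langle w^*,u\rangle\bigr]=0.
$$
Each bracket is nonnegative by the (always valid) Fenchel--Young inequality, so each must vanish, which is precisely the characterization $v^*\in \partial f(u)$ and $w^*\in \partial g(u)$. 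This yields the desired splitting $u^*=v^*+w^*\in \partial f(u)+\partial g(u)$, completing the reverse inclusion.

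The main obstacle is the closed-range/Attouch--Brezis step producing the exact inf-convolution formula; without some such qualification condition on $\dom f$ and $\dom g$ the formula can fail badly. The hypothesis that $C$ is a closed linear subspace is exactly the minimal qualification condition that makes the argument go through in a general Banach space (in finite dimensions, relative interior conditions suffice, but in the infinite-dimensional $L^p(\Omega)$ setting used in Section~\ref{Section 5}, one genuinely needs this closed-subspace hypothesis, which in the paper's application is verified via Propositions~\ref{A1: lsc of BV} and \ref{A1: extension of BV}).
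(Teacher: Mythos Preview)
Your proposal is correct and outlines the standard Attouch--Brezis argument. The paper itself does not prove this proposition at all: its ``proof'' consists solely of citations to \cite[Corollary~2.1]{MR849549} (the original Attouch--Brezis paper) and to \cite{MR1823719} for an alternative proof. What you have sketched is precisely the content of the first reference, so in effect you have supplied the proof that the paper outsources to the literature. Your two-step structure---first the exact infimal-convolution formula $(f+g)^*=f^*\,\square\, g^*$ with attainment, then the Fenchel--Young splitting---is the standard route, and your identification of the Baire-category/open-mapping ingredient behind the qualification condition is accurate.
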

\begin{proof}
This is \cite[Corollary 2.1]{MR849549}.
See also \cite{MR1823719} for an easy proof.
\end{proof}


\section*{Acknowledgments}
This work is supported in part by National Science Foundation (NSF) grant No. DMS-1818748  (Z. Chen)

\section*{Literature cited}
\renewcommand\refname{}

\bibliographystyle{plain}
\bibliography{refsshao}

\begin{thebibliography}{10}

\bibitem{MR1857292}
Luigi Ambrosio, Nicola Fusco, and Diego Pallara.
\newblock {\em Functions of bounded variation and free discontinuity problems}.
\newblock Oxford Mathematical Monographs. The Clarendon Press, Oxford
  University Press, New York, 2000.

\bibitem{ANDELMAN1995603}
D.~Andelman.
\newblock Chapter 12 - electrostatic properties of membranes: The
  poisson-boltzmann theory.
\newblock In R.~Lipowsky and E.~Sackmann, editors, {\em Structure and Dynamics
  of Membranes}, volume~1 of {\em Handbook of Biological Physics}, pages
  603--642. North-Holland, 1995.

\bibitem{MR750538}
Gabriele Anzellotti.
\newblock Pairings between measures and bounded functions and compensated
  compactness.
\newblock {\em Ann. Mat. Pura Appl. (4)}, 135:293--318 (1984), 1983.

\bibitem{MR849549}
H\'{e}dy Attouch and Ha\"{\i}m Brezis.
\newblock Duality for the sum of convex functions in general {B}anach spaces.
\newblock In {\em Aspects of mathematics and its applications}, volume~34 of
  {\em North-Holland Math. Library}, pages 125--133. North-Holland, Amsterdam,
  1986.

\bibitem{Baker:2005}
N.~A. Baker.
\newblock Improving implicit solvent simulations: a {Poisson}-centric view.
\newblock {\em Current Opinion in Structural Biology}, 15(2):137--43, 2005.

\bibitem{Baker10037}
Nathan~A. Baker, David Sept, Simpson Joseph, Michael~J. Holst, and J.~Andrew
  McCammon.
\newblock Electrostatics of nanosystems: Application to microtubules and the
  ribosome.
\newblock {\em Proceedings of the National Academy of Sciences},
  98(18):10037--10041, 2001.

\bibitem{Bates:2009}
P.~W. Bates, Z.~Chen, Y.~H. Sun, G.~W. Wei, and S.~Zhao.
\newblock Geometric and potential driving formation and evolution of
  biomolecular surfaces.
\newblock {\em J. Math. Biol.}, 59:193--231, 2009.

\bibitem{Bates:2008}
P.~W. Bates, G.~W. Wei, and Shan Zhao.
\newblock Minimal molecular surfaces and their applications.
\newblock {\em Journal of Computational Chemistry}, 29(3):380--91, 2008.

\bibitem{Boschitsch:2004}
A.~H. Boschitsch and M.~O. Fenley.
\newblock Hybrid boundary element and finite difference method for solving the
  nonlinear {{Poisson-Boltzmann}} equation.
\newblock {\em Journal of Computational Chemistry}, 25(7):935--955, 2004.

\bibitem{BOTELLOSMITH2013274}
Wesley~M. Botello-Smith, Xingping Liu, Qin Cai, Zhilin Li, Hongkai Zhao, and
  Ray Luo.
\newblock Numerical poisson–boltzmann model for continuum membrane systems.
\newblock {\em Chemical Physics Letters}, 555:274--281, 2013.

\bibitem{Cabani:1981}
S.~Cabani, P.~Gianni, V~Mollica, and L~Lepori.
\newblock {Group Contributions to the Thermodynamic Properties of Non-Ionic
  Organic Solutes in Dilute Aqueous Solution}.
\newblock {\em Journal of Solution Chemistry}, 10(8):563--595, 1981.

\bibitem{MR3148636}
V.~Caselles, K.~Jalalzai, and M.~Novaga.
\newblock On the jump set of solutions of the total variation flow.
\newblock {\em Rend. Semin. Mat. Univ. Padova}, 130:155--168, 2013.

\bibitem{MR2368971}
Vicent Caselles, Antonin Chambolle, and Matteo Novaga.
\newblock The discontinuity set of solutions of the {TV} denoising problem and
  some extensions.
\newblock {\em Multiscale Model. Simul.}, 6(3):879--894, 2007.

\bibitem{MR2815736}
Vicent Caselles, Antonin Chambolle, and Matteo Novaga.
\newblock Regularity for solutions of the total variation denoising problem.
\newblock {\em Rev. Mat. Iberoam.}, 27(1):233--252, 2011.

\bibitem{doi:10.1021/jp7101012}
Jianwei Che, Joachim Dzubiella, Bo~Li, and J.~Andrew McCammon.
\newblock Electrostatic free energy and its variations in implicit solvent
  models.
\newblock {\em The Journal of Physical Chemistry B}, 112(10):3058--3069, 2008.
\newblock PMID: 18275182.

\bibitem{CChen:2009}
C.~J. Chen, Rishu Saxena, and G.~W. Wei.
\newblock Differential geometry based multiscale models for virus formation and
  evolution.
\newblock {\em Int. J. Biomed. Imaging}, 2010(308627), 2010.

\bibitem{ZhanChen:2010a}
Z.~Chen, N.~A. Baker, and G.~W. Wei.
\newblock Differential geometry based solvation models {I}: Eulerian
  formulation.
\newblock {\em J. Comput. Phys.}, 229:8231--8258, 2010.

\bibitem{ZhanChen:2011b}
Z.~Chen and G.~W. Wei.
\newblock Differential geometry based solvation models {III}: Quantum
  formulation.
\newblock {\em J. Chem. Phys.}, 135:1941108, 2011.

\bibitem{ZhanChen:2016a}
Zhan Chen.
\newblock Minimization and eulerian formulation of differential geormetry based
  nonpolar multiscale solvation models.
\newblock {\em Computational and Mathematical Biophysics}, 1(open-issue), 2016.

\bibitem{doi:10.1063/1.4745084}
Zhan Chen, Shan Zhao, Jaehun Chun, Dennis~G. Thomas, Nathan~A. Baker, Peter~W.
  Bates, and G.~W. Wei.
\newblock Variational approach for nonpolar solvation analysis.
\newblock {\em The Journal of Chemical Physics}, 137(8):084101, 2012.

\bibitem{Cheng:2007e}
L.~T. Cheng, Joachim Dzubiella, Andrew~J. McCammon, and B.~Li.
\newblock Application of the level-set method to the implicit solvation of
  nonpolar molecules.
\newblock {\em Journal of Chemical Physics}, 127(8), 2007.

\bibitem{MR3740372}
Shibin Dai, Bo~Li, and Jianfeng Lu.
\newblock Convergence of phase-field free energy and boundary force for
  molecular solvation.
\newblock {\em Arch. Ration. Mech. Anal.}, 227(1):105--147, 2018.

\bibitem{doi:10.1021/cr00101a005}
Malcolm~E. Davis and J.~Andrew McCammon.
\newblock Electrostatics in biomolecular structure and dynamics.
\newblock {\em Chemical Reviews}, 90(3):509--521, 1990.

\bibitem{MR1313551}
Marco Degiovanni and Marco Marzocchi.
\newblock A critical point theory for nonsmooth functionals.
\newblock {\em Ann. Mat. Pura Appl. (4)}, 167:73--100, 1994.

\bibitem{MR1637956}
Marco Degiovanni and Friedemann Schuricht.
\newblock Buckling of nonlinearly elastic rods in the presence of obstacles
  treated by nonsmooth critical point theory.
\newblock {\em Math. Ann.}, 311(4):675--728, 1998.

\bibitem{Dong:2006}
F.~Dong and H.~X. Zhou.
\newblock Electrostatic contribution to the binding stability of
  protein-protein complexes.
\newblock {\em Proteins}, 65(1):87--102, 2006.

\bibitem{PhysRevLett.96.087802}
J.~Dzubiella, J.~M.~J. Swanson, and J.~A. McCammon.
\newblock Coupling hydrophobicity, dispersion, and electrostatics in continuum
  solvent models.
\newblock {\em Phys. Rev. Lett.}, 96:087802, Mar 2006.

\bibitem{doi:10.1063/1.2171192}
J.~Dzubiella, J.~M.~J. Swanson, and J.~A. McCammon.
\newblock Coupling nonpolar and polar solvation free energies in implicit
  solvent models.
\newblock {\em The Journal of Chemical Physics}, 124(8):084905, 2006.

\bibitem{MR1727362}
Ivar Ekeland and Roger T\'{e}mam.
\newblock {\em Convex analysis and variational problems}, volume~28 of {\em
  Classics in Applied Mathematics}.
\newblock Society for Industrial and Applied Mathematics (SIAM), Philadelphia,
  PA, english edition, 1999.
\newblock Translated from the French.

\bibitem{MR1158660}
Lawrence~C. Evans and Ronald~F. Gariepy.
\newblock {\em Measure theory and fine properties of functions}.
\newblock Studies in Advanced Mathematics. CRC Press, Boca Raton, FL, 1992.

\bibitem{Feig:2004b}
M.~Feig and C.~L. Brooks~III.
\newblock Recent advances in the development and application of implicit
  solvent models in biomolecule simulations.
\newblock {\em Curr Opin Struct Biol.}, 14:217 -- 224, 2004.

\bibitem{Fogolari:2002}
F.~Fogolari, A.~Brigo, and H.~Molinari.
\newblock The {{Poisson-Boltzmann}} equation for biomolecular electrostatics: a
  tool for structural biology.
\newblock {\em Journal of Molecular Recognition}, 15(6):377--92, 2002.

\bibitem{Geng:2007a}
Weihua Geng, Sining Yu, and G.~W. Wei.
\newblock Treatment of charge singularities in implicit solvent models.
\newblock {\em Journal of Chemical Physics}, 127:114106, 2007.

\bibitem{MR737190}
David Gilbarg and Neil~S. Trudinger.
\newblock {\em Elliptic partial differential equations of second order}, volume
  224 of {\em Grundlehren der Mathematischen Wissenschaften [Fundamental
  Principles of Mathematical Sciences]}.
\newblock Springer-Verlag, Berlin, second edition, 1983.

\bibitem{MR775682}
Enrico Giusti.
\newblock {\em Minimal surfaces and functions of bounded variation}, volume~80
  of {\em Monographs in Mathematics}.
\newblock Birkh\"{a}user Verlag, Basel, 1984.

\bibitem{Grant:2007}
J.~A. Grant, B.~T. Pickup, M.~T. Sykes, C.~A. Kitchen, and A.~Nicholls.
\newblock The {Gaussian Generalized Born} model: application to small
  molecules.
\newblock {\em Physical Chemistry Chemical Physics}, 9:4913--22, 2007.

\bibitem{Grochowski:2007}
P.~Grochowski and Joanna Trylska.
\newblock Continuum molecular electrostatics, salt effects and counterion
  binding. a review of the {{Poisson-Boltzmann}} theory and its modifications.
\newblock {\em Biopolymers}, 89(2):93--113, 2007.

\bibitem{HawkinsShaoChen2001}
Elizabeth Hawkins, Yuanzhen Shao, and Zhan Chen.
\newblock New variational analysis on the sharp interface of multiscale
  implicit solvation: general expressions and applications.
\newblock {\em Communications in Information and Systems}, 21(1):37--64, 2021.

\bibitem{MR2348842}
Bernd Kawohl and Friedemann Schuricht.
\newblock Dirichlet problems for the 1-{L}aplace operator, including the
  eigenvalue problem.
\newblock {\em Commun. Contemp. Math.}, 9(4):515--543, 2007.

\bibitem{Lamm:2003}
G.~Lamm.
\newblock The {{Poisson-Boltzmann}} equation.
\newblock In K.~B. Lipkowitz, R.~Larter, and T.~R. Cundari, editors, {\em
  Reviews in Computational Chemistry}, pages 147--366. John Wiley and Sons,
  Inc., Hoboken, N.J., 2003.

\bibitem{MR4160135}
Jeremy LeCrone, Yuanzhen Shao, and Gieri Simonett.
\newblock The surface diffusion and the {W}illmore flow for uniformly regular
  hypersurfaces.
\newblock {\em Discrete Contin. Dyn. Syst. Ser. S}, 13(12):3503--3524, 2020.

\bibitem{Levy:2003}
R.~M. Levy, L.~Y. Zhang, E.~Gallicchio, and A.~K. Felts.
\newblock On the nonpolar hydration free energy of proteins: surface area and
  continuum solvent models for the solute-solvent interaction energy.
\newblock {\em Journal of the American Chemical Society}, 125(31):9523--9530,
  2003.

\bibitem{MR2491589}
Bo~Li.
\newblock Minimization of electrostatic free energy and the
  {P}oisson-{B}oltzmann equation for molecular solvation with implicit solvent.
\newblock {\em SIAM J. Math. Anal.}, 40(6):2536--2566, 2009.

\bibitem{MR2873259}
Bo~Li, Xiaoliang Cheng, and Zhengfang Zhang.
\newblock Dielectric boundary force in molecular solvation with the
  {P}oisson-{B}oltzmann free energy: a shape derivative approach.
\newblock {\em SIAM J. Appl. Math.}, 71(6):2093--2111, 2011.

\bibitem{MR3396402}
Bo~Li and Yuan Liu.
\newblock Diffused solute-solvent interface with {P}oisson-{B}oltzmann
  electrostatics: free-energy variation and sharp-interface limit.
\newblock {\em SIAM J. Appl. Math.}, 75(5):2072--2092, 2015.

\bibitem{https://doi.org/10.1002/jcc.23033}
Chuan Li, Lin Li, Jie Zhang, and Emil Alexov.
\newblock Highly efficient and exact method for parallelization of grid-based
  algorithms and its implementation in delphi.
\newblock {\em Journal of Computational Chemistry}, 33(24):1960--1966, 2012.

\bibitem{doi:10.1021/ct400065j}
Lin Li, Chuan Li, Zhe Zhang, and Emil Alexov.
\newblock On the dielectric “constant” of proteins: Smooth dielectric
  function for macromolecular modeling and its implementation in delphi.
\newblock {\em Journal of Chemical Theory and Computation}, 9(4):2126--2136,
  2013.
\newblock PMID: 23585741.

\bibitem{MR159110}
Norman~G. Meyers.
\newblock An {$L^{p}$}e-estimate for the gradient of solutions of second order
  elliptic divergence equations.
\newblock {\em Ann. Scuola Norm. Sup. Pisa Cl. Sci. (3)}, 17:189--206, 1963.

\bibitem{Mongan:2007}
J.~Mongan, C.~Simmerling, J.~A. McCammon, D.~A. Case, and A.~Onufriev.
\newblock Generalized {Born} model with a simple, robust molecular volume
  correction.
\newblock {\em Journal of Chemical Theory and Computation}, 3(1):159--69, 2007.

\bibitem{Nicholls:2008}
Anthony Nicholls, David~L. Mobley, Peter~J. Guthrie, John~D. Chodera, and
  Vijay~S. Pande.
\newblock Predicting small-molecule solvation free energies: An informal blind
  test for computational chemistry.
\newblock {\em Journal of Medicinal Chemistry}, 51(4):769--79, 2008.

\bibitem{MR3297714}
Jan Pr\"{u}ss and Gieri Simonett.
\newblock On the manifold of closed hypersurfaces in {$\Bbb{R}^n$}.
\newblock {\em Discrete Contin. Dyn. Syst.}, 33(11-12):5407--5428, 2013.

\bibitem{MR3524106}
Jan Pr\"{u}ss and Gieri Simonett.
\newblock {\em Moving interfaces and quasilinear parabolic evolution
  equations}, volume 105 of {\em Monographs in Mathematics}.
\newblock Birkh\"{a}user/Springer, [Cham], 2016.

\bibitem{Ratkova:2010}
E.~L. Ratkova, G.~N. Chuev, V.~P. Sergiievskyi, and M.~V. Fedorov.
\newblock An accurate prediction of hydration free energies by combination of
  molecular integral equations theory with structural descriptors.
\newblock {\em J. Phys. Chem. B}, 114(37):12068--2079, 2010.

\bibitem{MR3363401}
Leonid~I. Rudin, Stanley Osher, and Emad Fatemi.
\newblock Nonlinear total variation based noise removal algorithms.
\newblock volume~60, pages 259--268. 1992.
\newblock Experimental mathematics: computational issues in nonlinear science
  (Los Alamos, NM, 1991).

\bibitem{Shao2022constrained}
Yuanzhen Shao, Elizabeth Hawkins, Kai Wang, and Zhan Chen.
\newblock A constrained variational model of biomolecular solvation and its
  numerical implementation.
\newblock {\em Computers \& Mathematics with Applications}, 107:17--28, 2022.

\bibitem{doi:10.1146/annurev.bb.19.060190.001505}
Kim~A. Sharp and Barry Honig.
\newblock Electrostatic interactions in macromolecules: Theory and
  applications.
\newblock {\em Annual Review of Biophysics and Biophysical Chemistry},
  19(1):301--332, 1990.
\newblock PMID: 2194479.

\bibitem{MR2141694}
Zhongwei Shen.
\newblock Bounds of {R}iesz transforms on {$L^p$} spaces for second order
  elliptic operators.
\newblock {\em Ann. Inst. Fourier (Grenoble)}, 55(1):173--197, 2005.

\bibitem{MR1422252}
R.~E. Showalter.
\newblock {\em Monotone operators in {B}anach space and nonlinear partial
  differential equations}, volume~49 of {\em Mathematical Surveys and
  Monographs}.
\newblock American Mathematical Society, Providence, RI, 1997.

\bibitem{Swanson:2005a}
J.~M.~J. Swanson, J.~Mongan, and J.~A. McCammon.
\newblock Limitations of atom-centered dielectric functions in implicit solvent
  models.
\newblock {\em Journal of Physical Chemistry B}, 109(31):14769--72, 2005.

\bibitem{Tam94}
Italo Tamaninni.
\newblock Regularity results for almost minimal oriented hypersurfaces in
  {$\Bbb R^N$}.
\newblock {\em Quaderni del Dipartimento di Matematica}, Universita di Lecce 1,
  1984.

\bibitem{Tjong:2007b}
H.~Tjong and H.~X. Zhou.
\newblock {GBr6NL}: A generalized {Born} method for accurately reproducing
  solvation energy of the nonlinear {{Poisson-Boltzmann}} equation.
\newblock {\em Journal of Chemical Physics}, 126:195102, 2007.

\bibitem{MR1823719}
A.~Verona and M.~E. Verona.
\newblock A simple proof of the sum formula.
\newblock {\em Bull. Austral. Math. Soc.}, 63(2):337--339, 2001.

\bibitem{Wagoner:2006}
J.~A. Wagoner and N.~A. Baker.
\newblock Assessing implicit models for nonpolar mean solvation forces: the
  importance of dispersion and volume terms.
\newblock {\em Proceedings of the National Academy of Sciences of the United
  States of America}, 103(22):8331--6, 2006.

\bibitem{Wang_2015_Parameter}
Bao Wang and GW~Wei.
\newblock Parameter optimization in differential geometry based solvation
  models.
\newblock {\em The Journal of chemical physics}, 143(13):134119, 2015.

\bibitem{Wei:2005}
G.~W. Wei, Y.~H. Sun, Y.~C. Zhou, and M.~Feig.
\newblock Molecular multiresolution surfaces.
\newblock {\em arXiv:math-ph/0511001v1}, pages 1 -- 11, 2005.

\bibitem{Wei2010}
Guo-Wei Wei.
\newblock Differential geometry based multiscale models.
\newblock {\em Bulletin of Mathematical Biology}, 72(6):1562--1622, Aug 2010.

\bibitem{Wei_2016_differential}
Guo~Wei Wei and Nathan~A Baker.
\newblock Differential geometry-based solvation and electrolyte transport
  models for biomolecular modeling: a review.
\newblock In {\em Many-Body Effects and Electrostatics in Biomolecules}, pages
  435--480. Jenny Stanford Publishing, 2016.

\bibitem{Zhao_2011_Pseudo}
Shan Zhao.
\newblock Pseudo-time-coupled nonlinear models for biomolecular surface
  representation and solvation analysis.
\newblock {\em International Journal for Numerical Methods in Biomedical
  Engineering}, 27(12):1964--1981, 2011.

\bibitem{doi:10.1063/1.466406}
Huan‐Xiang Zhou.
\newblock Macromolecular electrostatic energy within the nonlinear
  poisson–boltzmann equation.
\newblock {\em The Journal of Chemical Physics}, 100(4):3152--3162, 1994.

\bibitem{doi:10.1021/ct401058w}
Shenggao Zhou, Li-Tien Cheng, Joachim Dzubiella, Bo~Li, and J.~Andrew McCammon.
\newblock Variational implicit solvation with poisson–boltzmann theory.
\newblock {\em Journal of Chemical Theory and Computation}, 10(4):1454--1467,
  2014.
\newblock PMID: 24803864.

\bibitem{Zhou2020curvature}
Yongcheng Zhou.
\newblock On curvature driven rotational diffusion of proteins on membrane
  surfaces.
\newblock {\em SIAM Journal on Applied Mathematics}, 80(1):359--381, 2020.

\end{thebibliography}


\end{document}